\newtheorem{thm}{Theorem}[section]
\newtheorem{cor}[thm]{Corollary}
\newtheorem{prop}[thm]{Proposition}
\newtheorem{lem}[thm]{Lemma}
\newtheorem{claim}[thm]{Claim}
\theoremstyle{definition}
\newtheorem{dfn}[thm]{Definition}
\newtheorem{ex}[thm]{Example}
\newtheorem{fact}[thm]{Fact}
\theoremstyle{remark}
\newtheorem{rem}[thm]{Remark}
\newtheorem{notation}[thm]{Notation}
\newcommand{\Ob}{\mathrm{Ob}}
\newcommand{\ppr}{^{\prime}}
\newcommand{\Sett}{\mathit{Set}}
\newcommand{\Mon}{\mathit{Mon}}
\newcommand{\Ring}{\mathit{Ring}}
\newcommand{\TamG}{\mathit{Tam}(G)}
\newcommand{\Gs}{{}_G\mathit{set}}
\newcommand{\G}{\mathscr{G}}
\newcommand{\I}{\mathscr{I}}
\newcommand{\J}{\mathscr{J}}
\newcommand{\K}{\mathscr{K}}
\newcommand{\m}{\mathfrak{m}}
\newcommand{\p}{\mathfrak{p}}
\newcommand{\q}{\mathfrak{q}}
\newcommand{\Ker}{\mathit{Ker}}
\newcommand{\Spec}{\mathit{Spec}}
\newcommand{\pt}{\mathrm{pt}}
\newcommand{\id}{\mathrm{id}}
\newcommand{\Vo}{V_{\mathfrak{o}}}
\newcommand{\Ve}{V_{\mathfrak{e}}}
\newcommand{\Vt}{V_{\mathfrak{t}}}
\newcommand{\po}{\preceq}
\newcommand{\pn}{\prec}
\newcommand{\red}{_{\mathrm{red}}}
\newcommand{\TamM}{\mathit{Tam}^{\mathrm{MRC}}_{(G)}}
\newcommand{\TMRC}{T_{\mathrm{MRC}}}
\newcommand{\SMRC}{S_{\mathrm{MRC}}}
\numberwithin{equation}{section}
\begin{document}

\title[Ideals of Tambara functors]{Ideals of Tambara functors}

\author{Hiroyuki NAKAOKA}
\address{Department of Mathematics and Computer Science, Kagoshima University, 1-21-35 Korimoto, Kagoshima, 890-0065 Japan}

\email{nakaoka@sci.kagoshima-u.ac.jp}

\thanks{The author wishes to thank Professor Fumihito Oda for his suggestions and useful comments}
\thanks{The author wishes to thank Professor Serge Bouc and Professor Radu Stancu for the stimulating arguments and their useful comments and advices}
\thanks{Supported by JSPS Grant-in-Aid for Young Scientists (B) 22740005}

\begin{abstract}
For a finite group $G$, a Tambara functor on $G$ is regarded as a $G$-bivariant analog of a commutative ring. In this article, we consider a $G$-bivariant analog of the ideal theory for Tambara functors.
\end{abstract}

\maketitle

\tableofcontents

\section{Introduction and Preliminaries}

For a finite group $G$, a Tambara functor is regarded as a $G$-bivariant analog of a commutative ring, as seen in \cite{Yoshida}.
As such, for example a $G$-bivariant analog of the semigroup-ring construction was discussed in \cite{N_TamMack} and \cite{N_DressPolyHopf}, with relation to the Dress construction \cite{O-Y3}.
As part of this analogy, in this article we consider a $G$-bivariant analog of the ideal theory for Tambara functors.

In section \ref{section2}, we define an ideal of a Tambara functor, and show the fundamental theorem on homomorphisms for Tambara functors (Proposition \ref{PropKer}, Remark \ref{RemHom}).

In section \ref{section3}, we perform some operations on ideals (intersections, products, and sums), and show that an analog of the Chinese remainder theorem holds for any Tambara functor (Corollary \ref{CorChinese}).

In section \ref{section4}, we define a prime ideal of a Tambara functor, and construct the prime spectrum of a Tambara functor (Proposition \ref{PropSpec} and Theorem \ref{ThmMorphSpec}).
We may define an analog of an integral domain (resp. a field), by requiring the zero-ideal $(0)\subsetneq T$ is prime (resp. maximal).
We give a criterion for a Tambara functor to be a \lq field' (Theorem \ref{ThmField}), and show that the Burnside Tambara functor $\Omega$ is an \lq integral domain' (Theorem \ref{ThmOmegaDom}).

In section \ref{section4} we also show that any Tambara functor has a canonical ideal quotient, contained in a fixed point functor (Theorem \ref{ThmMRCUniv}). For example, this canonical quotient obtained from $\Omega$ becomes isomorphic to the fixed point functor associated to $\mathbb{Z}$ (Example \ref{ExMRCZ}).

\bigskip

Throughout this article, we fix a finite group $G$, whose unit element is denoted by $e$. Abbreviately we denote the trivial subgroup of $G$ by $e$, instead of $\{ e\}$.
$H\le G$ means $H$ is a subgroup of $G$.
$\Gs$ denotes the category of finite $G$-sets and $G$-equivariant maps.
If $H\le G$ and $g\in G$, 
then $H^g$ denotes the conjugate 
$H^g=g^{-1}Hg$.

A monoid is always assumed to be unitary and commutative. Similarly a ring is assumed to be commutative, with an additive unit $0$ and a multiplicative unit $1$.
We denote the category of monoids by $\Mon$, the category of rings by $\Ring$. 
A monoid homomorphism preserves units, and a ring homomorphism preserves $0$ and $1$.

For any category $\mathscr{C}$ and any pair of objects $X$ and $Y$ in $\mathscr{C}$, the set of morphisms from $X$ to $Y$ in $\mathscr{C}$ is denoted by $\mathscr{C}(X,Y)$. 

\bigskip

First we briefly recall the definition of a Tambara functor.

\begin{dfn}\label{DefTamFtr}
A {\it Tambara functor} $T$ {\it on} $G$ is a triplet $T=(T^{\ast},T_+,T_{\bullet})$ of two covariant functors
\[ T_+\colon\Gs\rightarrow\Sett,\ \ T_{\bullet}\colon\Gs\rightarrow\Sett \]
and one contravariant functor
\[ T^{\ast}\colon\Gs\rightarrow\Sett \]
which satisfies the following. Here $\Sett$ is the category of sets.
\begin{enumerate}
\item $T^{\alpha}=(T^{\ast},T_+)$ is a Mackey functor on $G$.
\item $T^{\mu}=(T^{\ast},T_{\bullet})$ is a semi-Mackey functor on $G$.

\noindent Since $T^{\alpha},T^{\mu}$ are semi-Mackey functors, we have $T^{\ast}(X)=T_+(X)=T_{\bullet}(X)$ for each $X\in\Ob(\Gs)$. We denote this by $T(X)$.
\item (Distributive law)
If we are given an exponential diagram
\[
\xy
(-12,6)*+{X}="0";
(-12,-6)*+{Y}="2";
(0,6)*+{A}="4";
(12,6)*+{Z}="6";
(12,-6)*+{B}="8";
(0,0)*+{exp}="10";
{\ar_{f} "0";"2"};
{\ar_{p} "4";"0"};
{\ar_{\lambda} "6";"4"};
{\ar^{\rho} "6";"8"};
{\ar^{q} "8";"2"};
\endxy
\]
in $\Gs$, then
\[
\xy
(-18,7)*+{T(X)}="0";
(-18,-7)*+{T(Y)}="2";
(0,7)*+{T(A)}="4";
(18,7)*+{T(Z)}="6";
(18,-7)*+{T(B)}="8";
{\ar_{T_{\bullet}(f)} "0";"2"};
{\ar_{T_+(p)} "4";"0"};
{\ar^{T^{\ast}(\lambda)} "4";"6"};
{\ar^{T_{\bullet}(\rho)} "6";"8"};
{\ar^{T_+(q)} "8";"2"};
{\ar@{}|\circlearrowright "0";"8"};
\endxy
\]
is commutative.
\end{enumerate}

If $T=(T^{\ast},T_+,T_{\bullet})$ is a Tambara functor, then $T(X)$ becomes a ring for each $X\in\Ob(\Gs)$, whose additive (resp. multiplicative) structure is induced from that on $T^{\alpha}(X)$ (resp. $T^{\mu}(X)$).
Those $T^{\ast}(f), T_+(f),T_{\bullet}(f)$ for morphisms $f$ in $\Gs$ are called {\it structure morphisms} of $T$. For each $f\in\Gs(X,Y)$,
\begin{itemize}
\item $T^{\ast}(f)\colon T(Y)\rightarrow T(X)$ is a ring homomorphism, called the {\it restriction} along $f$. 
\item $T_+(f)\colon T(X)\rightarrow T(Y)$ is an additive homomorphism, called the {\it additive transfer} along $f$.
\item $T_{\bullet}(f)\colon T(X)\rightarrow T(Y)$ is a multiplicative homomorphism, called the {\it multiplicative transfer} along $f$.
\end{itemize}
$T^{\ast}(f),T_+(f),T_{\bullet}(f)$ are often abbreviated to $f^{\ast},f_+,f_{\bullet}$.

A {\it morphism} of Tambara functors $\varphi\colon T\rightarrow S$ is a family of ring homomorphisms
\[  \varphi=\{\varphi_X\colon T(X)\rightarrow S(X) \}_{X\in\Ob(\Gs)}, \]
natural with respect to all of the contravariant and the covariant parts. We denote the category of Tambara functors by $\TamG$.
\end{dfn}

\smallskip

Remark that if $f\in\Gs(X,Y)$ is a $G$-map between transitive $G$-sets $X$ and $Y$, then the number of elements in a fiber of $f$
\[ \sharp f^{-1}(y)=\sharp\{ x\in X\mid f(x)=y \} \]
does not depend on $y\in Y$. This is called the {\it degree} of $f$, and denoted by $\deg f$.
\begin{dfn}
A Tambara functor $T$ is {\it additively cohomological} if $T^{\alpha}$ is cohomological as a Mackey functor $($\cite{B-B}$)$, namely, if for any $f\in\Gs(X,Y)$ between transitive $X,Y\in\Ob(\Gs)$,
\[ f_+f^{\ast}(b)=(\deg f)\, b\quad({}^{\forall}b\in T(Y)) \]
is satisfied.
\end{dfn}

In this article, a {\it Tambara functor} always means a Tambara functor on $G$.

\begin{ex}
$\ \ $
\begin{enumerate}
\item If we define $\Omega$ by
\[ \Omega(X)=K_0(\Gs/X) \]
for each $X\in\Ob(\Gs)$, where the right hand side is the Grothendieck ring of the category of finite $G$-sets over $X$, then $\Omega$ becomes a Tambara functor on $G$. This is called the {\it Burnside Tambara functor}.
For each $f\in\Gs(X,Y)$,
\[ f_{\bullet}\colon\Omega(X)\rightarrow\Omega(Y) \]
is the one determined by
\[ \qquad\quad f_{\bullet}(A\overset{p}{\rightarrow}X)=(\Pi_f(A)\overset{\pi}{\rightarrow}Y)\quad\  ({}^{\forall}(A\overset{p}{\rightarrow}X)\in\Ob(\Gs/X)), \]
where $\Pi_f(A)$ and $\pi$ is
\[ \Pi_f(A)=\Set{ (y,\sigma)| \begin{array}{c}y\in Y,\\ \sigma\colon f^{-1}(y)\rightarrow A\ \ \text{a map of sets},\\ p\circ\sigma=\id_{f^{-1}(y)}\end{array} }, \]

\[ \pi(y,\sigma)=y. \]
$G$ acts on $\Pi_f(A)$ by $g\cdot(y,\sigma)=(gy,{}^g\!\sigma)$, where ${}^g\!\sigma$ is the map defined by
\[ {}^g\!\sigma(x)=g\sigma(g^{-1}x)\quad({}^{\forall}x\in f^{-1}(gy)). \]

\item Let $R$ be a $G$-ring. If we define $\mathcal{P}_R$ by
\[ \mathcal{P}_R(X)=\{ G\text{-maps from}\ X\ \text{to}\ R \} \]
for each $X\in\Ob(\Gs)$, then $\mathcal{P}_R$ becomes a Tambara functor on $G$. This is called the {\it fixed point functor} associated to $R$.
\end{enumerate}
\end{ex}

For the properties of exponential diagrams, see \cite{Tam}. Here we only introduce the following.

\begin{rem}\label{RemExp}
Let $f\in\Gs(X, Y)$ be any morphism, and let $\nabla\colon X\amalg X\rightarrow X$ be the folding map. Then
\[
\xy
(-20,6)*+{X}="0";
(-20,-6)*+{Y}="2";
(-2,6)*+{X\amalg X}="4";
(24,6)*+{U\amalg U^{\prime}}="6";
(24,-6)*+{V}="8";
(0,0)*+{exp}="10";
{\ar_{f} "0";"2"};
{\ar_{\nabla} "4";"0"};
{\ar_{r\amalg r^{\prime}} "6";"4"};
{\ar^{t\cup t^{\prime}} "6";"8"};
{\ar^{s} "8";"2"};
\endxy
\]
where
\begin{eqnarray*}
V&=&\{(y,C)\mid y\in Y,\ C\subseteq f^{-1}(y)\},\\
U&=&\{(x,C)\mid x\in X,\ C\subseteq f^{-1}(f(x)),\ x\in C\},\\
U^{\prime}&=&\{(x,C)\mid x\in X,\ C\subseteq f^{-1}(f(x)),\ x\in \!\!\!\!\! /\ C\} ,
\end{eqnarray*}
\begin{eqnarray*}
r\colon U\rightarrow X&,&(x,C)\mapsto x,\\
r^{\prime}\colon U^{\prime}\rightarrow X&,&(x,C)\mapsto x,\\
t\colon U\rightarrow V&,&(x,C)\mapsto (f(x),C),\\
t^{\prime}\colon U^{\prime}\rightarrow V&,&(x,C)\mapsto (f(x),C),\\
s\colon V\rightarrow Y&,&(y,C)\mapsto y,
\end{eqnarray*}
is an exponential diagram.
\end{rem}

Any Tambara functor satisfies the following properties, which will be frequently used in this article.
\begin{fact}\label{FactTam}(\cite{Tam})
Let $T$ be a Tambara functor.
\begin{enumerate}
\item For any $G$-map $f\in\Gs(X,Y)$, if we let $\eta$ be the inclusion
\[ \eta\colon Y\setminus f(X)\hookrightarrow Y, \]
then we have $f_{\bullet}(0)=\eta_+(1)$.
\item (Projection formula)
For any $f\in\Gs(X,Y)$, $a\in T(X)$ and $b\in T(Y)$, we have
\[ f_+(af^{\ast}(b))=f_+(a)b. \]
\item (Addition formula)
For any $f\in\Gs(X,Y)$, in the notation of Remark \ref{RemExp}, we have
\[ f_{\bullet}(a+b)=s_+(t_{\bullet}r^{\ast}(a)\cdot t\ppr_{\bullet}r^{\prime\ast}(b)) \]
for any $a,b\in T(X)$.
\end{enumerate}
\end{fact}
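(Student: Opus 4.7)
The plan is to derive each of the three items from the distributive law coupled with standard (semi-)Mackey functor properties (base change, behavior on disjoint unions). I would handle them in the order (1), (3), (2), since (1) provides a lemma used inside (3), and (3) takes the bulk of the work.

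For (1), take the exponential diagram generated by $f\colon X\to Y$ and the unique map $o\colon \emptyset\to X$. A direct computation in $\Gs$ shows $\Pi_f(\emptyset)\cong Y\setminus f(X)$ with projection to $Y$ the given inclusion $\eta$, while the intermediate object $Z$ must also be empty, so both $\lambda$ and $\rho$ are the unique maps out of $\emptyset$. Applying the distributive law to $1\in T(\emptyset)$ (and noting that $T(\emptyset)$ is the zero ring, so $0=1$ there), the left side becomes $f_\bullet(o_+(1))=f_\bullet(0)$, while the right side collapses to $\eta_+(\rho_\bullet\lambda^{\ast}(1))=\eta_+(1)$, using that $T_\bullet$ preserves multiplicative units.

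For (3), apply the distributive law to the exponential diagram for $f\colon X\to Y$ and $\nabla\colon X\amalg X\to X$ described in Remark \ref{RemExp}. Under the canonical identification $T(X\amalg X)\cong T(X)\times T(X)$ coming from the semi-Mackey property of $T^{\alpha}$, the element $c=(a,b)$ satisfies $\nabla_+(c)=a+b$, so the task reduces to computing the right hand side $s_+(t\cup t^{\prime})_\bullet(r\amalg r^{\prime})^{\ast}(c)$. The restriction $(r\amalg r^{\prime})^{\ast}(c)$ is the pair $(r^{\ast}a,\,r^{\prime\ast}b)\in T(U)\times T(U^{\prime})$. The key step is to show $(t\cup t^{\prime})_\bullet$ sends this pair to $t_\bullet r^{\ast}(a)\cdot t^{\prime}_\bullet r^{\prime\ast}(b)$. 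To do this, I would establish the multiplicative decomposition $(u,u^{\prime})=j_{U\bullet}(u)\cdot j_{U^{\prime}\bullet}(u^{\prime})$ in $T(U\amalg U^{\prime})$, where $j_U,j_{U^{\prime}}$ are the two inclusions; this uses part (1) (to compute $j_{U^{\prime}}^{\ast}j_{U\bullet}(u)=1$ and symmetrically) together with multiplicative base change. Then functoriality of $T_\bullet$ applied to $(t\cup t^{\prime})\circ j_U=t$ and $(t\cup t^{\prime})\circ j_{U^{\prime}}=t^{\prime}$ finishes the calculation.

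For (2), I would apply the additive base-change axiom of the Mackey functor $T^{\alpha}$ to the pullback square
\[
\xy
(-16,7)*+{X}="0";
(16,7)*+{Y\times X}="2";
(-16,-7)*+{Y}="4";
(16,-7)*+{Y\times Y}="6";
{\ar^{(f,\id_X)} "0";"2"};
{\ar_{f} "0";"4"};
{\ar^{\id_Y\times f} "2";"6"};
{\ar_{\Delta_Y} "4";"6"};
\endxy
\]
in $\Gs$. Applied to the external product $b\times a:=\pi_Y^{\ast}(b)\cdot \pi_X^{\ast}(a)\in T(Y\times X)$, the restriction along $(f,\id_X)$ gives $f^{\ast}(b)\cdot a$ (since $T^{\ast}$ is a ring homomorphism), and pushing forward by $f$ yields the left side. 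On the other hand, moving around the other way and identifying multiplication via $T_\bullet(\nabla_Z)$ on $T(Z\amalg Z)\cong T(Z)\times T(Z)$ (equivalently via external product and diagonal restriction) gives $f_+(a)\cdot b$.

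The main obstacle is the bookkeeping in (3): specifically, identifying the element $c\in T(X\amalg X)$ corresponding to the pair $(a,b)$, tracking its image through each leg of the diagram in Remark \ref{RemExp}, and—most importantly—establishing the multiplicative decomposition $(u,u^{\prime})=j_{U\bullet}(u)\cdot j_{U^{\prime}\bullet}(u^{\prime})$ which converts the single transfer $(t\cup t^{\prime})_\bullet$ into the product of two separate transfers. It is precisely at this step that (1) is needed, which is why I would prove (1) first.
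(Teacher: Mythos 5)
The paper gives no proof of Fact \ref{FactTam}; it simply cites Tambara's original article \cite{Tam}. So there is no internal proof to compare against, and your proposal has to be judged on its own.

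Your arguments for (1) and (3) are correct. For (1), the exponential diagram built from $f$ and $\emptyset\to X$ indeed has $\Pi_f(\emptyset)\cong Y\setminus f(X)$ with $Z=\emptyset$, and the distributive law applied to the unique element of $T(\emptyset)$ gives exactly $f_\bullet(0)=\eta_+(1)$. For (3), the reduction to showing $(t\cup t\ppr)_\bullet(u,u\ppr)=t_\bullet(u)\cdot t\ppr_\bullet(u\ppr)$ is right, and your multiplicative decomposition $(u,u\ppr)=j_{U\bullet}(u)\cdot j_{U\ppr\bullet}(u\ppr)$ is valid: multiplicative base change over the empty pullback of $j_U$ and $j_{U\ppr}$ gives $j_{U\ppr}^{\ast}j_{U\bullet}(u)=1$ (in fact this already follows from $T_\bullet$ preserving the multiplicative unit, so (1) is not strictly needed here, though using it is also fine). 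Functoriality of $T_\bullet$ then finishes the calculation.

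Your argument for (2), however, has a genuine gap: it is circular. After applying additive base change to your pullback square, the two sides of the Mackey identity are $f_+(af^{\ast}(b))$ (which you compute correctly, since $T^{\ast}$ is a ring homomorphism) and $\Delta_Y^{\ast}(\id_Y\times f)_+(b\times a)$. To identify the latter with $f_+(a)\cdot b$, you need to show $(\id_Y\times f)_+(\pi_Y^{\ast}(b)\cdot\pi_X^{\ast}(a))=q_1^{\ast}(b)\cdot q_2^{\ast}(f_+(a))$ in $T(Y\times Y)$. But $T_+$ is only an additive homomorphism, not multiplicative, so there is no way to push this product through $(\id_Y\times f)_+$ except by noting $\pi_Y^{\ast}(b)=(\id_Y\times f)^{\ast}q_1^{\ast}(b)$ and invoking the projection formula for $\id_Y\times f$ -- which is what one is trying to prove. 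The invocation of \lq\lq multiplication via $T_\bullet(\nabla_Z)$'' or \lq\lq external product and diagonal restriction'' only restates the definition of the product and does not address how $T_+$ interacts with it. In other words, the compatibility of additive transfer with cross products is equivalent to the projection formula (each is the Frobenius axiom of a Green functor in a different guise), so deducing one from the other is not a proof. A correct argument for (2) must bring in the distributive law directly, not merely the Mackey base-change axiom of $T^{\alpha}$.
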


\section{Definition and fundamental properties}
\label{section2}
\subsection{Definition of an ideal}

\begin{dfn}\label{DefIdeal}
Let $T$ be a Tambara functor. An {\it ideal} $\I$ of $T$ is a family of ideals $\I(X)\subseteq T(X)$ $({}^{\forall}X\in\Ob(\Gs))$ satisfying
\begin{enumerate}
\item[{\rm (i)}] $f^{\ast}(\I(Y))\subseteq \I(X)$,
\item[{\rm (ii)}] $f_+(\I(X))\subseteq \I(Y)$,
\item[{\rm (iii)}] $f_{\bullet}(\I(X))\subseteq f_{\bullet}(0)+\I(Y)$
\end{enumerate}
for any $f\in\Gs(X,Y)$. These conditions also imply
\[ \I(X_1\amalg X_2)\cong\I(X_1)\times\I(X_2) \]
for any $X_1,X_2\in\Ob(\Gs)$. 
\end{dfn}
Obviously when $G$ is trivial, this definition of an ideal agrees with the ordinary definition of an ideal of a commutative ring.

\begin{rem}
Let $T$ be a Tambara functor. For any ideal $\I\subseteq T$, we have $\I(\emptyset)=T(\emptyset)=0$.
\end{rem}

\begin{ex}
Trivial examples of ideals are the following.
\begin{enumerate}
\item $\I=T$ is an ideal of $T$. Often we except this trivial one.
\item If we let $\I(X)=0$ for each $X\in\Ob(\Gs)$, then $\I\subseteq T$ is an ideal. This is called the {\it zero ideal} and denoted by $(0)$.
\end{enumerate}
\end{ex}

\medskip

If $T$ is a Tambara functor, then $T^{\alpha}$ becomes a Green functor, with the cross product defined by
\[ \mathit{cr}\colon T^{\alpha}(X)\times T^{\alpha}(Y)\rightarrow T^{\alpha}(X\times Y)\ ;\ (x,y)\mapsto p_X^{\ast}(x)\cdot p_Y^{\ast}(y) \]
for any $X,Y\in\Ob(\Gs)$. (See \cite{Tam_Manu} or \cite{N_TamMack}.)

With this underlying Green functor structure, an ideal $\I\subseteq T$ gives a {\it functorial ideal} $\I\subseteq T^{\alpha}$. A functorial ideal is originally defined in \cite{Thevenaz}, and written in the following form using the cross product in \cite{Bouc}.
\begin{dfn}\label{DefFunctIdeal}
Let $A$ be a Green functor on $G$. A {\it functorial ideal} $\I$ of $A$ is a Mackey subfunctor $\I\subseteq A$, satisfying
\begin{eqnarray*}
\mathit{cr}(\I(X)\times A(Y))\subseteq \I(X\times Y)\\
\mathit{cr}(A(X)\times \I(Y))\subseteq \I(X\times Y)
\end{eqnarray*}
for any $X,Y\in\Ob(\Gs)$.
\end{dfn}
It is shown that $A/\I=\{ A(X)/\I(X) \}_{X\in\Ob(\Gs)}$ naturally becomes a Green functor, with structure morphisms and cross product induced from those of $A$ (\cite{Thevenaz}, \cite{Bouc}).

\begin{lem}\label{LemQuot}
Let $T$ be a Tambara functor, and let $\I\subseteq T$ be an ideal. Then $\I\subseteq T^{\alpha}$ becomes a functorial ideal.
\end{lem}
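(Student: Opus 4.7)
The plan is to unpack both definitions and verify the two requirements for a functorial ideal, which will turn out to follow almost immediately from the ring-theoretic hypothesis on each $\I(X)$ together with conditions (i) and (ii) of Definition \ref{DefIdeal}. Condition (iii) on multiplicative transfers is not needed here, since $T^\alpha$ only sees the additive-transfer structure.

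First I would check that $\I$ is a Mackey subfunctor of $T^\alpha = (T^\ast, T_+)$. For each $X \in \Ob(\Gs)$, the set $\I(X)$ is by assumption an ideal of the ring $T(X)$, hence in particular an additive subgroup of $T^\alpha(X)$. Closure under the restriction maps $f^\ast$ is condition (i), and closure under the additive transfers $f_+$ is condition (ii). So $\I$ is a Mackey subfunctor.

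Next I would verify the two cross-product conditions. Given $X,Y \in \Ob(\Gs)$, let $p_X \colon X \times Y \to X$ and $p_Y \colon X \times Y \to Y$ be the projections. For $a \in \I(X)$ and $b \in T(Y)$,
\[
\mathit{cr}(a,b) = p_X^\ast(a) \cdot p_Y^\ast(b).
\]
By condition (i), $p_X^\ast(a) \in \I(X\times Y)$, and since $\I(X\times Y)$ is an ideal of the ring $T(X\times Y)$, multiplying it by the element $p_Y^\ast(b) \in T(X\times Y)$ keeps it inside $\I(X\times Y)$. This gives $\mathit{cr}(\I(X) \times T(Y)) \subseteq \I(X\times Y)$, and the other inclusion $\mathit{cr}(T(X)\times \I(Y)) \subseteq \I(X\times Y)$ follows symmetrically using $p_Y^\ast$ in place of $p_X^\ast$.

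There is no real obstacle here: once one sees that the cross product is defined via pullbacks along the projections, the whole statement reduces to (a) $\I(X\times Y)$ being an ideal of the ring $T(X\times Y)$, which is built into Definition \ref{DefIdeal}, and (b) condition (i) of that definition. The only potentially subtle point, worth a single sentence in the final write-up, is to remark that the closure of $\I(X)$ under scalar multiplication by elements of $T(X)$ is exactly what upgrades the Mackey subfunctor $\I \subseteq T^\alpha$ to a functorial ideal in the sense of Definition \ref{DefFunctIdeal}.
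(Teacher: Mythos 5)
Your proof is correct and follows essentially the same route as the paper: identify the cross product as $p_X^\ast(a)\cdot p_Y^\ast(b)$, use condition (i) to land one factor in $\I(X\times Y)$, and then invoke the fact that $\I(X\times Y)$ is an ideal of $T(X\times Y)$ to absorb the other factor. The only cosmetic difference is that the paper handles one of the two cross-product inclusions and cites commutativity of the Green functor $T^\alpha$ to dispense with the other, whereas you treat both by symmetry; you are also a bit more explicit in spelling out that conditions (i), (ii) and the additive-subgroup property make $\I$ a Mackey subfunctor, a step the paper leaves implicit.
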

\begin{proof}
Since $T^{\alpha}$ is commutative as a Green functor, it suffices to show
\[ \mathit{cr}(T^{\alpha}(X)\times\I(Y))\subseteq\I(X\times Y) \]
for each $X,Y\in\Ob(\Gs)$. This immediately follows from
\[ p_X^{\ast}(x)\cdot p_Y^{\ast}(y)\in p_X^{\ast}\cdot \I(X\times Y)\subseteq\I(X\times Y) \]
for any $x\in T^{\alpha}(X)=T(X)$ and $y\in\I(Y)$.
\end{proof}

\begin{prop}\label{PropQuot}
Let $\I$ be an ideal of a Tambara functor $T$.
Then
\[ T/\I=\{ T(X)/\I(X)\}_{X\in\Ob(\Gs)} \]
has a natural structure of a Tambara functor induced from that of $T$.

Moreover, projections $T(X)\rightarrow T(X)/\I(X)$ give a morphism of Tambara functors $T\rightarrow T/\I$.
\end{prop}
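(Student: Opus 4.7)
The plan is to bootstrap from Lemma~\ref{LemQuot}: since $\I$ is a functorial ideal of the Green functor $T^\alpha$, the cited results of Th\'evenaz and Bouc already endow $T^\alpha/\I=\{T(X)/\I(X)\}_X$ with the structure of a Green functor. This furnishes, at no additional cost, the ring structure on each quotient $T(X)/\I(X)$, the induced restrictions $\overline{f^\ast}$, and the induced additive transfers $\overline{f_+}$, together with all Mackey and Green axioms relating them. What is genuinely left to construct is the family of multiplicative transfers $\overline{f_\bullet}$, together with the semi-Mackey and distributivity axioms on which the Tambara structure depends.

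The central step is to show that for every $f\in\Gs(X,Y)$, the prescription $\overline{f_\bullet}(\bar a):=\overline{f_\bullet(a)}$ is well-defined, i.e.\ that $f_\bullet(a'+c)\equiv f_\bullet(a')\pmod{\I(Y)}$ whenever $c\in\I(X)$. This is where condition~(iii) of Definition~\ref{DefIdeal} enters essentially, together with the addition formula of Fact~\ref{FactTam}(3). Applying that formula to the exponential diagram of Remark~\ref{RemExp} yields
\[
f_\bullet(a'+c)-f_\bullet(a')=s_+\bigl(t_\bullet r^\ast(a')\cdot\bigl(t\ppr_\bullet r^{\prime\ast}(c)-t\ppr_\bullet(0)\bigr)\bigr),
\]
using that $f_\bullet(a')=f_\bullet(a'+0)=s_+(t_\bullet r^\ast(a')\cdot t\ppr_\bullet(0))$. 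By~(i) we have $r^{\prime\ast}(c)\in\I(U')$, whence by~(iii), $t\ppr_\bullet r^{\prime\ast}(c)-t\ppr_\bullet(0)\in\I(V)$. Since $\I$ is already a functorial ideal of $T^\alpha$, the product $t_\bullet r^\ast(a')\cdot(\,\cdot\,)$ remains in $\I(V)$, and finally~(ii) places its image under $s_+$ in $\I(Y)$, as desired.

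Granted well-definedness, the remaining axioms are essentially free. Multiplicativity of $\overline{f_\bullet}$ follows from multiplicativity of $f_\bullet$; functoriality $\overline{(gf)_\bullet}=\overline{g_\bullet}\circ\overline{f_\bullet}$ and preservation of disjoint unions (hence the semi-Mackey property of $T^\mu/\I$) descend immediately from the corresponding identities in $T$; and for any exponential diagram the Tambara distributivity square in $T$ projects to the same square in $T/\I$, which therefore commutes. By construction all structure maps on $T/\I$ are defined so that the projections $T(X)\twoheadrightarrow T(X)/\I(X)$ intertwine the respective $f^\ast,f_+,f_\bullet$, so these projections assemble into a morphism of Tambara functors $T\to T/\I$.

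The only real obstacle is the well-definedness of $\overline{f_\bullet}$: because $f_\bullet$ is only multiplicative, not additive, it is condition~(iii) together with the addition formula that must do the work, while (i) and (ii) handle the auxiliary bookkeeping. Once that step is completed, the remaining verifications are formal consequences of the corresponding properties in $T$ and of the Green-functor quotient already provided by Lemma~\ref{LemQuot}.
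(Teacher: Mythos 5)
Your proposal is correct and follows essentially the same route as the paper: start from Lemma~\ref{LemQuot} to obtain the Green functor quotient, reduce to well-definedness of $\overline{f_\bullet}$, and verify it via the addition formula of Fact~\ref{FactTam}(3) applied to the exponential diagram of Remark~\ref{RemExp}, combining conditions (i), (ii), (iii). The only cosmetic difference is that you isolate the difference $f_\bullet(a'+c)-f_\bullet(a')$ explicitly while the paper tracks the containments set-theoretically; also, the step where you multiply by $t_\bullet r^\ast(a')$ needs only that $\I(V)$ is an ideal of $T(V)$, not the full functorial-ideal property, but this does not affect correctness.
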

\begin{proof}
By Lemma \ref{LemQuot} $T/\I$ becomes a Green functor, with induced restrictions and additive transfers.
Thus it suffices to show that multiplicative transfers are well-defined by
\[
f_{\bullet}\colon T(X)/\I(X)\rightarrow T(Y)/\I(Y)\ ;\ 
x+\I(X)\mapsto f_{\bullet}(x)+\I(Y),
\]
for each $f\in\Gs(X,Y)$.
This well-definedness follows from the addition formula. In fact, in the notation of Remark \ref{RemExp}, we have
\begin{eqnarray*}
f_{\bullet}(x+\I(X))&=&s_+(t_{\bullet}r^{\ast}(x)\cdot t\ppr_{\bullet}r^{\prime\ast}(\I(X)))\\
&\subseteq&s_+(t_{\bullet}r^{\ast}(x)\cdot t\ppr_{\bullet}(\I(U\ppr)))\\
&\subseteq&s_+(t_{\bullet}r^{\ast}(x)\cdot (t\ppr_{\bullet}(0)+\I(V)))\\
&\subseteq&s_+((t_{\bullet}r^{\ast}(x)\cdot t\ppr_{\bullet}(0))+\I(V))\\
&\subseteq&s_+(t_{\bullet}r^{\ast}(x)\cdot t\ppr_{\bullet}(0))+\I(Y)\\
&=&f_{\bullet}(x)+\I(Y).
\end{eqnarray*}

The latter part is trivial.
\end{proof}

\begin{ex}
Let $m$ be an arbitrary integer.
Let $\Omega$ be the Burnside Tambara functor, and let $m\Omega\subseteq\Omega$ be a family of ideals
\[ m\Omega=\{ m(\Omega(X))\}_{X\in\Ob(\Gs)}. \]
Then $m\Omega\subseteq\Omega^{\alpha}$ is a functorial ideal in the sense of Definition \ref{DefFunctIdeal}, while it is not necessarily an ideal of $\Omega$ in the sense of Definition \ref{DefIdeal}, except $m=0,\pm 1$.

In fact, when $G=\mathbb{Z}/2\mathbb{Z}$, for the canonical projection $p^G_e\colon G/e\rightarrow G/G$, we have
\[ (p^G_e)_{\bullet}(2(G/e\overset{\id}{\rightarrow}G/e))=2(G/G\overset{\id}{\rightarrow}G/G)+(G/e\overset{p^G_e}{\rightarrow}G/G), \]
and thus $(p^G_e)_{\bullet}(2\Omega(G/e))\subseteq (p^G_e)_{\bullet}(0)+2\Omega(G/G)$ is not satisfied since $(p^G_e)_{\bullet}(0)=0$.
\end{ex}

\begin{dfn}\label{DefShriek}
Let $T$ be a Tambara functor. For each $f\in\Gs(X,Y)$, define $f_!\colon T(X)\rightarrow T(Y)$ by
\[ f_!(x)=f_{\bullet}(x)-f_{\bullet}(0) \]
for any $x\in T(X)$.
With this definition, the condition {\rm (iii)} in Definition \ref{DefIdeal} can be written as 
\begin{enumerate}
\item[{\rm (iii)$\ppr$}] $f_!(\I(X))\subseteq \I(Y)$.
\end{enumerate}
\end{dfn}

\begin{lem}\label{LemShriek}
Let $T$ be a Tambara functor. Then, we have the following for any $f\in\Gs(X,Y)$.
\begin{enumerate}
\item $f_!$ satisfies $f_!(x)f_!(y)=f_!(xy)$ for any $x,y\in T(X)$.
\item If $f$ is surjective, then we have $f_!=f_{\bullet}$.
\item If
\[
\xy
(-6,6)*+{X\ppr}="0";
(6,6)*+{Y\ppr}="2";
(-6,-6)*+{X}="4";
(6,-6)*+{Y}="6";
(0,0)*+{\square}="8";
{\ar^{f\ppr} "0";"2"};
{\ar_{\xi} "0";"4"};
{\ar^{\eta} "2";"6"};
{\ar_{f} "4";"6"};
\endxy
\]
is a pull-back diagram, then $f\ppr_!\, \xi^{\ast}=\eta^{\ast}f_!$ holds.
\item If
\[
\xy
(-14,6)*+{X}="0";
(-14,-6)*+{Y}="2";
(0,6)*+{A}="4";
(14,6)*+{Z}="6";
(14,-6)*+{\Pi}="8";
(0,0)*+{\mathit{exp}}="10";
{\ar_{f} "0";"2"};
{\ar_{p} "4";"0"};
{\ar_>>>>>{\lambda} "6";"4"};
{\ar^>>>>{\rho} "6";"8"};
{\ar^{\pi} "8";"2"};
\endxy
\]
is an exponential diagram, then $\pi_+\rho_!\,\lambda^{\ast}=f_!\, p_+$ holds.
\item If $S$ is another Tambara functor and $\varphi\colon T\rightarrow S$ is a morphism of Tambara functors, then 
$\varphi_Yf_!=f_!\,\varphi_X$
holds.
\end{enumerate}
\end{lem}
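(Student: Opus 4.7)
The overall plan is to unpack the definition $f_!(x)=f_{\bullet}(x)-f_{\bullet}(0)$ in each part and reduce to properties of $f_{\bullet}$ that are already available (multiplicativity of $f_\bullet$, Fact \ref{FactTam}(1), the semi-Mackey pullback axiom, the distributive law, and naturality of $\varphi$). The main conceptual input is the identity $f_\bullet(0)^2 = f_\bullet(0)$ and $f_\bullet(x)\cdot f_\bullet(0) = f_\bullet(0)$ coming from multiplicativity of $f_\bullet$ applied to the absorbing element $0$; everything else is bookkeeping.

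For (1), I expand $(f_\bullet(x)-f_\bullet(0))(f_\bullet(y)-f_\bullet(0))$ and use $f_\bullet(x)f_\bullet(0)=f_\bullet(x\cdot0)=f_\bullet(0)$ (and similarly $f_\bullet(0)^2=f_\bullet(0)$) to collapse the cross terms to a single $-f_\bullet(0)$, giving $f_\bullet(xy)-f_\bullet(0)=f_!(xy)$. For (2), I invoke Fact \ref{FactTam}(1): $f_\bullet(0)=\eta_+(1)$ with $\eta\colon Y\setminus f(X)\hookrightarrow Y$; surjectivity of $f$ forces $Y\setminus f(X)=\emptyset$, so $\eta_+(1)\in T(\emptyset)=0$, whence $f_\bullet(0)=0$ and $f_!=f_\bullet$. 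For (5), since $\varphi_Y$ is a ring homomorphism (hence additive) and natural with respect to $f_\bullet$, a one-line computation $\varphi_Y f_!(x)=\varphi_Y f_\bullet(x)-\varphi_Y f_\bullet(0)=f_\bullet\varphi_X(x)-f_\bullet(0)=f_!\varphi_X(x)$ finishes the job, using $\varphi_X(0)=0$.

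For (3), the semi-Mackey structure of $T^{\mu}$ says that the given pullback square produces the Beck--Chevalley identity $\eta^{\ast}f_\bullet=f'_\bullet\xi^{\ast}$. Applying this to $x$ and separately to $0$ (and using $\xi^{\ast}(0)=0$ since $\xi^\ast$ is a ring homomorphism) gives both $\eta^\ast f_\bullet(x)=f'_\bullet\xi^\ast(x)$ and $\eta^\ast f_\bullet(0)=f'_\bullet(0)$; subtracting yields the desired $f'_!\xi^\ast=\eta^\ast f_!$. For (4), the distributive law gives $\pi_+\rho_\bullet\lambda^\ast=f_\bullet p_+$, so applied at $0$ (using $\lambda^\ast(0)=0$ and $p_+(0)=0$) we obtain $\pi_+\rho_\bullet(0)=f_\bullet(0)$. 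Then, since $\pi_+$ is additive,
\[
\pi_+\rho_!\lambda^\ast(a)=\pi_+\rho_\bullet\lambda^\ast(a)-\pi_+\rho_\bullet(0)=f_\bullet p_+(a)-f_\bullet(0)=f_!p_+(a),
\]
which is exactly the claim. The only mildly subtle point across the five items is remembering that $f_!$ is generally neither additive nor a ring map, so reductions must always pass through $f_\bullet$ and the additive operations outside; once one adopts this discipline no real obstacle arises.
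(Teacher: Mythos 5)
Your proof is correct and follows essentially the same route as the paper's: unfold $f_!(x)=f_{\bullet}(x)-f_{\bullet}(0)$ in each item and reduce to multiplicativity of $f_{\bullet}$, Fact~\ref{FactTam}(1), the Mackey pullback identity for $T^{\mu}$, the distributive law, and additivity/naturality of $\varphi$. The paper compresses these to one-line citations, but the computations you spell out are exactly the intended ones.
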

\begin{proof}
{\rm (1)} follows from
\begin{eqnarray*}
f_!(x)f_!(y)&=&(f_{\bullet}(x)-f_{\bullet}(0))(f_{\bullet}(y)-f_{\bullet}(0))\\
&=&f_{\bullet}(x)f_{\bullet}(y)-f_{\bullet}(x)f_{\bullet}(0)-f_{\bullet}(0)f_{\bullet}(y)+f_{\bullet}(0)f_{\bullet}(0)\\
&=&f_{\bullet}(x)f_{\bullet}(y)-f_{\bullet}(0).
\end{eqnarray*}
{\rm (2)} follows from the fact that $f_!(0)=0$ is satisfied for any surjective $f$ (Fact \ref{FactTam}).
{\rm (3)} follows from that, for any $x\in T(X)$,
\begin{eqnarray*}
f\ppr_!\, \xi^{\ast}(x)&=&f\ppr_{\bullet} \xi^{\ast}(x)-f\ppr_{\bullet}(0)\ =\ f\ppr_{\bullet} \xi^{\ast}(x)-f\ppr_{\bullet} \xi^{\ast}(0)\\
&=&\eta^{\ast}f_{\bullet}(x)-\eta^{\ast}f_{\bullet}(0)\ =\ \eta^{\ast}(f_{\bullet}(x)-f_{\bullet}(0))\ =\ \eta^{\ast}(f_!(x)).
\end{eqnarray*}
{\rm (4)} follows from that, for any $a\in T(A)$,
\begin{eqnarray*}
\pi_+\rho_!\lambda^{\ast}(a)&=&\pi_+(\rho_{\bullet}\lambda^{\ast}(a)-\rho_{\bullet}\lambda^{\ast}(0))\\
&=&\pi_+\rho_{\bullet}\lambda^{\ast}(a)-\pi_+\rho_{\bullet}\lambda^{\ast}(0)\\
&=&f_{\bullet}p_+(a)-f_{\bullet}p_+(0)\ =\ f_!\, p_+(a).
\end{eqnarray*}
{\rm (5)} follows from the additivity of $\varphi$.

\end{proof}

\begin{prop}\label{PropKer}
If $\varphi\colon T\rightarrow S$ is a morphism of Tambara functors, then $\Ker\,\varphi=\{\Ker(\varphi_X)\}_{X\in\Ob(\Gs)}$ is an ideal of $T$.
\end{prop}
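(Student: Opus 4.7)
The plan is to verify the three conditions of Definition \ref{DefIdeal} directly, using naturality of $\varphi$ together with the reformulation of (iii) in terms of $f_!$ from Definition \ref{DefShriek}.

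First I would note that $\Ker(\varphi_X) \subseteq T(X)$ is an ordinary ring-theoretic ideal for each $X \in \Ob(\Gs)$, since $\varphi_X \colon T(X) \to S(X)$ is a ring homomorphism. This handles the pointwise ideal condition.

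For compatibility conditions (i) and (ii), I would use that $\varphi$ is natural with respect to the contravariant part $T^\ast$ and the covariant part $T_+$. Concretely, for $f \in \Gs(X,Y)$ and $y \in \Ker(\varphi_Y)$, naturality gives $\varphi_X(f^\ast(y)) = f^\ast(\varphi_Y(y)) = f^\ast(0) = 0$, so $f^\ast(y) \in \Ker(\varphi_X)$; and for $x \in \Ker(\varphi_X)$, $\varphi_Y(f_+(x)) = f_+(\varphi_X(x)) = f_+(0) = 0$, so $f_+(x) \in \Ker(\varphi_Y)$.

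The main obstacle, as always with Tambara functors, is condition (iii), because $f_\bullet$ is only a multiplicative homomorphism and does not send $0$ to $0$, so naturality of $\varphi$ with respect to $T_\bullet$ does not literally say it preserves kernels. The key trick is to use the reformulation (iii$'$) in Definition \ref{DefShriek}: it suffices to show $f_!(\Ker\varphi_X) \subseteq \Ker\varphi_Y$. By Lemma \ref{LemShriek}(5), $\varphi_Y \circ f_! = f_! \circ \varphi_X$, and directly from the defining formula $f_!(x) = f_\bullet(x) - f_\bullet(0)$ one has $f_!(0) = 0$. Hence for $x \in \Ker(\varphi_X)$,
\[
\varphi_Y(f_!(x)) = f_!(\varphi_X(x)) = f_!(0) = 0,
\]
so $f_!(x) \in \Ker(\varphi_Y)$, establishing (iii$'$) and therefore (iii). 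This completes the verification that $\Ker\varphi$ is an ideal.
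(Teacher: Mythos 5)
Your proof is correct and follows essentially the same route as the paper: the paper simply invokes Lemma \ref{LemShriek}(5) to get condition (iii$'$) and notes the remaining conditions are obvious, and your write-up fills in exactly those steps (naturality for (i) and (ii), and the $f_!(0)=0$ observation to close the $f_!$-argument).
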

\begin{proof}

Condition {\rm (iii)$\ppr$} in Definition \ref{DefShriek} follows from {\rm (5)} in Lemma \ref{LemShriek}. The other conditions are obviously satisfied.
\end{proof}

\begin{rem}\label{RemHom}
If $\varphi\colon T\rightarrow S$ is a morphism of Tambara functors, then it can be shown easily that $\mathit{Im}\,\varphi=\{\mathit{Im}(\varphi_X)\}_{X\in\Ob(\Gs)}$ is a Tambara subfunctor of $S$, and we have a natural isomorphism of Tambara functors
\[ T/\Ker\,\varphi\overset{\cong}{\longrightarrow}\mathit{Im}\,\varphi, \]
compatible with $\varphi$ and the projection $T\rightarrow T/\Ker\,\varphi$. 

Proposition \ref{PropQuot} and Proposition \ref{PropKer} mean that the ideals of a Tambara functor $T$ and projections from $T$ correspond essentially bijectively.
\end{rem}

In this article, we say a morphism of Tambara functors $\varphi\colon T\rightarrow S$ is {\it surjective} if $\varphi_X$ is surjective for any $X\in\Ob(\Gs)$.
\begin{prop}\label{Prop1to1}
$\ \ $
\begin{enumerate}
\item Let $\varphi\colon T\rightarrow S$ be a morphism in $\TamG$. If we define $\varphi^{-1}(\J)$ by
\[ (\varphi^{-1}(\J))(X)=\varphi_X^{-1}(\J(X))\quad({}^{\forall}X\in\Ob(\Gs)) \]for each ideal $\J\subseteq S$, then $\varphi^{-1}$ gives a map
\[ \varphi^{-1}\colon\{\,\text{ideals of}\,\ S\,\}\rightarrow\{\,\text{ideals of}\,\ T\,\}\ ;\ \J\mapsto\varphi^{-1}(\J), \]
which preserves inclusions of ideals.
\item Let $\varphi\colon T\rightarrow S$ be a surjective morphism of Tambara functors. If we define $\varphi({\I})$ by
\[ \varphi({\I})(X)=\varphi_X(\I(X))\quad({}^{\forall}X\in\Ob(\Gs)) \]
for each ideal $\I\subseteq T$ containing $\Ker\,\varphi$, then this gives a map
\[ \{\,\text{ideals of}\,\ T\ \text{containing}\ \Ker\,\varphi\,\}\rightarrow\{\,\text{ideals of}\,\ S\,\}\ ;\ \I\mapsto\varphi({\I}) \]
which preserves inclusions of ideals.
\item Let $\I$ be an ideal of $T$, and let $p\colon T\rightarrow T/\I$ be the projection. Then the maps in {\rm (1)} and {\rm (2)} give a bijection
\[ p^{-1}\colon \{\,\text{ideals of}\,\ T/\I\,\}\overset{1:1}{\longrightarrow} \{\,\text{ideals of}\,\ T\ \text{containing}\ \I\,\}.\]
\end{enumerate}
\end{prop}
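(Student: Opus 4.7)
The plan is to verify parts (1) and (2) by checking the three clauses of Definition \ref{DefIdeal} pointwise in $X\in\Ob(\Gs)$, reading condition (iii) in the form (iii)$\ppr$ from Definition \ref{DefShriek} so that Lemma \ref{LemShriek}(5) can handle the multiplicative-transfer clause uniformly. Part (3) will then follow from the standard correspondence between ideals of a quotient ring and ideals containing the kernel, applied in each ring $T(X)$.

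For (1), conditions (i) and (ii) are immediate from naturality of $\varphi$ with respect to $f^{\ast}$ and $f_+$: if $\varphi_Y(b)\in\J(Y)$ then $\varphi_X(f^{\ast}(b))=f^{\ast}(\varphi_Y(b))\in\J(X)$, and symmetrically $\varphi_Y(f_+(a))=f_+(\varphi_X(a))\in\J(Y)$ when $\varphi_X(a)\in\J(X)$. For (iii)$\ppr$ I would invoke Lemma \ref{LemShriek}(5) to get $\varphi_Y(f_!(a))=f_!(\varphi_X(a))\in f_!(\J(X))\subseteq\J(Y)$, which says exactly $f_!(a)\in\varphi^{-1}(\J)(Y)$. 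For (2), the first observation is that $\varphi_X(\I(X))$ is a bona fide ideal of the ring $S(X)$ because $\varphi_X$ is a surjective ring homomorphism, after which the same three checks go through by writing a typical element of $\varphi(\I)(X)$ as $\varphi_X(a)$ with $a\in\I(X)$ and pushing through naturality together with Lemma \ref{LemShriek}(5). In both parts, preservation of inclusions is transparent from the definitions. Note that the hypothesis $\I\supseteq\Ker\,\varphi$ is not actually needed to verify that $\varphi(\I)$ is an ideal; its role is purely to make the correspondence in (3) a bijection.

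For (3), the identity $p(p^{-1}(\K))=\K$ for any ideal $\K$ of $T/\I$ is immediate from the surjectivity of each $p_X$, and the identity $p^{-1}(p(\J))=\J$ for any ideal $\J\supseteq\I$ uses $\Ker p_X=\I(X)\subseteq\J(X)$: if $p_X(x)\in p_X(\J(X))$, then $x$ differs from some element of $\J(X)$ by an element of $\I(X)\subseteq\J(X)$, so $x\in\J(X)$. The only spot that requires genuine attention—rather than routine transcription of the ordinary commutative-algebra argument—is clause (iii)$\ppr$ in parts (1) and (2), and as indicated above this is exactly what Lemma \ref{LemShriek}(5) has been set up to handle.
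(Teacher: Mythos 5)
Your proof is correct and is exactly the routine verification the paper has in mind when it says the proposition "follows immediately from the definition"; identifying Lemma~\ref{LemShriek}(5) as the tool that makes clause~(iii)$\ppr$ behave just like~(i) and~(ii) is the right observation, and your aside that $\I\supseteq\Ker\,\varphi$ is superfluous for the truth of~(2) is also accurate.
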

\begin{proof}
These follow immediately from the definition.
\end{proof}

For any $X\in\Ob(\Gs)$, we denote the unique map from $X$ to the one-point set $G/G$ by $\pt_X\colon X\rightarrow G/G$.

\begin{prop}\label{PropTrivIdeal}
Let $\I\subseteq T$ be an ideal. Then the following are equivalent.
\begin{enumerate}
\item $\I=T$.
\item $1\in\I(X)$ for any $X\in\Ob(\Gs)$.
\item $1\in\I(X)$ for some $\emptyset\ne X\in\Ob(\Gs)$.
\end{enumerate}
\end{prop}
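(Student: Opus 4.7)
The plan is to verify the three circular implications, observing that (1) $\Rightarrow$ (2) $\Rightarrow$ (3) are essentially definitional, while (3) $\Rightarrow$ (1) is the substantive direction. For (1) $\Rightarrow$ (2), the assumption $\I=T$ gives $\I(X)=T(X)\ni 1$ at every $X$. For (2) $\Rightarrow$ (3), one simply picks a concrete non-empty $G$-set, e.g.\ $X=G/G$.

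For the interesting implication (3) $\Rightarrow$ (1), suppose $1\in\I(X)$ for some non-empty $X\in\Ob(\Gs)$. The first step is to push this unit to the terminal object $G/G$ via the multiplicative transfer along $\pt_X\colon X\to G/G$. Since $X\ne\emptyset$, the map $\pt_X$ is surjective, so by Lemma \ref{LemShriek}(2) we have $(\pt_X)_!=(\pt_X)_{\bullet}$. Because $(\pt_X)_{\bullet}$ is a multiplicative monoid homomorphism, $(\pt_X)_{\bullet}(1)=1$. Using condition (iii)$'$ in Definition \ref{DefShriek} yields
\[ 1=(\pt_X)_!(1)\in (\pt_X)_!(\I(X))\subseteq\I(G/G). \]
Next, for an arbitrary $Y\in\Ob(\Gs)$, apply condition (i) to the unique $G$-map $\pt_Y\colon Y\to G/G$: since $(\pt_Y)^{\ast}$ is a ring homomorphism, $(\pt_Y)^{\ast}(1)=1$, whence
\[ 1\in (\pt_Y)^{\ast}(\I(G/G))\subseteq\I(Y). \]
Finally, since $\I(Y)$ is an ordinary ring ideal of $T(Y)$ containing $1$, we must have $\I(Y)=T(Y)$. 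As $Y$ was arbitrary, $\I=T$, which gives (1).

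The only real obstacle is handling condition (iii) cleanly, because the raw form $f_{\bullet}(\I(X))\subseteq f_{\bullet}(0)+\I(Y)$ would not let us conclude that $f_{\bullet}(1)$ itself lies in $\I(Y)$. The elegant way around this is precisely the reformulation (iii)$'$ combined with Lemma \ref{LemShriek}(2): for the surjective map $\pt_X$ we have $(\pt_X)_{\bullet}(0)=0$, so $(\pt_X)_{!}$ coincides with $(\pt_X)_{\bullet}$, and the unit propagates without any additive correction. Apart from that, the argument is a direct translation of the classical ring-theoretic fact ``an ideal containing $1$ is the whole ring'' into the $G$-bivariant setting, using the canonical maps $\pt_X$ as the bridge to and from the fixed level $G/G$.
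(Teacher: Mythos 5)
Your proof is correct and follows essentially the same route as the paper: both reduce to pushing the unit from $X$ up to $G/G$ via the multiplicative transfer along the surjective map $\pt_X$ (the paper directly observes $(\pt_X)_{\bullet}(0)=0$ and uses condition (iii), which is exactly what your invocation of Lemma \ref{LemShriek}(2) and condition (iii)$'$ amounts to), and then restricting back down to every $Y$ via $\pt_Y^{\ast}$.
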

\begin{proof}
Obviously {\rm (1)} and {\rm (2)} are equivalent, and {\rm (2)} implies {\rm (3)}.
Thus it suffices to show that {\rm (3)} implies {\rm (2)}. 

Assume $\I$ satisfies $1\in\I(X)$ for some non-empty $X\in\Ob(\Gs)$.
Remark that we have $f_{\bullet}(0)=0$ for any surjective map $f\in\Gs(X,X\ppr)$. Especially if $X\ppr=G/G$, then $(\pt_X)_{\bullet}(0)=0$. Thus by condition {\rm (iii)} in Definition \ref{DefIdeal}, we have $1=(\pt_X)_{\bullet}(1)\in\I(G/G)$.
Consequently for any $Y\in\Ob(\Gs)$, we have $1=\pt_Y^{\ast}(1)\in\I(Y)$.
\end{proof}

We have the following type of ideals. (We remark that, not all the ideals are of this form. See Example \ref{ExNot}.)
\begin{prop}\label{PropIM}
Let $T$ be a Tambara functor, and $I\subseteq T(G/e)$ be a $G$-invariant ideal of $T(G/e)$. Then there exists an ideal $\I_I\subseteq T$ satisfying $\I_I(G/e)=I$.

Moreover by construction, $\I_I$ is the maximum one among the ideals $\I$ satisfying $\I(G/e)=I$.
\end{prop}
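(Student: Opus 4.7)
The plan is to define $\I_I$ by setting
\[
\I_I(X)=\bigcap_{g\in\Gs(G/e,X)}(g^{\ast})^{-1}(I)=\{x\in T(X)\mid g^{\ast}(x)\in I\text{ for every }g\in\Gs(G/e,X)\}
\]
for each $X\in\Ob(\Gs)$. Since each $g^{\ast}$ is a ring homomorphism and $I$ is an ideal, $\I_I(X)$ is visibly an ideal of the ring $T(X)$. The structural fact I exploit throughout is that any $G$-set admitting a $G$-map to $G/e$ is \emph{free}, since stabilizers embed into those of $G/e$; hence it decomposes as a finite disjoint union of copies of $G/e$, and restriction to the copies suffices to test membership in $\I_I$.

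To verify the three conditions of Definition \ref{DefIdeal}: condition (i) is immediate from $g^{\ast}f^{\ast}=(fg)^{\ast}$. For (ii) and (iii), given $f\in\Gs(X,Y)$ and a test $g\in\Gs(G/e,Y)$, I form the pullback
\[
\xy
(-10,7)*+{X\ppr}="0";
(10,7)*+{G/e}="2";
(-10,-7)*+{X}="4";
(10,-7)*+{Y}="6";
(0,0)*+{\square}="8";
{\ar^{f\ppr} "0";"2"};
{\ar_{g\ppr} "0";"4"};
{\ar^{g} "2";"6"};
{\ar_{f} "4";"6"};
\endxy
\]
and decompose $X\ppr=\coprod_{j\in J}G/e$ with orbit inclusions $\iota_j$. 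Each composite $f\ppr\circ\iota_j\colon G/e\to G/e$ is a $G$-equivariant automorphism, and each composite $g\ppr\circ\iota_j$ lies in $\Gs(G/e,X)$. The Mackey square identity $g^{\ast}f_+=f\ppr_+\,g\ppr^{\ast}$ combined with the additive coproduct formula gives, for $x\in\I_I(X)$,
\[
g^{\ast}f_+(x)=\sum_{j\in J}(f\ppr\iota_j)_+\bigl((g\ppr\iota_j)^{\ast}(x)\bigr).
\]
Each $(g\ppr\iota_j)^{\ast}(x)$ lies in $I$ by the definition of $\I_I(X)$, and each $(f\ppr\iota_j)_+$ is (since $f\ppr\iota_j$ is an iso) the ring automorphism of $T(G/e)$ induced by an element of $G$, which preserves $I$ by the $G$-invariance hypothesis; hence the sum lies in $I$, so $f_+(x)\in\I_I(Y)$. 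For (iii), Lemma \ref{LemShriek} (3) yields $g^{\ast}f_!=f\ppr_!\,g\ppr^{\ast}$. If $X\ppr=\emptyset$, then $f\ppr_!=0$ trivially; otherwise $f\ppr$ is surjective and Lemma \ref{LemShriek} (2) gives $f\ppr_!=f\ppr_{\bullet}$, and the multiplicative coproduct formula $f\ppr_{\bullet}(w)=\prod_j(f\ppr\iota_j)_{\bullet}(\iota_j^{\ast}(w))$ shows that $f\ppr_{\bullet}(g\ppr^{\ast}(x))$ is again a product of elements of $I$, hence in $I$.

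Finally, $\I_I(G/e)=I$ because $\Gs(G/e,G/e)$ acts on $T(G/e)$ through the $G$-action and $I$ is $G$-invariant by hypothesis. For maximality, if $\I\subseteq T$ is any ideal with $\I(G/e)=I$ and $x\in\I(X)$, then for each $g\in\Gs(G/e,X)$ condition (i) for $\I$ gives $g^{\ast}(x)\in\I(G/e)=I$, whence $x\in\I_I(X)$. The main obstacle is condition (iii), which requires Lemma \ref{LemShriek} (3) to pull $f_!$ back along the test map together with the multiplicative coproduct behaviour of $f_{\bullet}$; once the pullback-to-$G/e$ reduction is set up, however, the required identities fall out routinely.
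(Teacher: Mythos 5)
Your proof is correct and follows essentially the same route as the paper: the same formula $\I_I(X)=\bigcap_{\gamma}(\gamma^{\ast})^{-1}(I)$, the same pullback to $G/e$ exploiting freeness of $X\ppr$, and the same appeal to the $G$-invariance of $I$. The paper treats $f_+$ explicitly and asserts $f_!$ is handled ``in the same way''; you helpfully unpack that step via Lemma \ref{LemShriek} (2), (3) and the multiplicative coproduct formula, and you also spell out the maximality claim, which the paper leaves as an immediate consequence of the construction.
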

\begin{proof}
For each $X\in\Ob(\Gs)$, define $\I_I(X)$ by
\begin{equation}
\label{EqIM}
\I_I(X)=\underset{\gamma\in\Gs(G/e,X)}{\bigcap}(\gamma^{\ast})^{-1}(I).
\end{equation}
Obviously $\I_I$ satisfies $\I_I(G/e)=I$. It remains to show $\I_I$ is closed under $f^{\ast}, f_+,f_!$ for each $f\in\Gs(X,Y)$.

Let $f\in\Gs(X,Y)$ be any $G$-map. Obviously we have $f^{\ast}(\I_I(Y))\subseteq\I_I(X)$.
We show $f_+(\I_I(X))\subseteq\I(Y)$. (\, $f_!(\I_I(X))\subseteq\I(Y)$ is shown in the same way.) For any $\gamma_Y\in\Gs(G/e,Y)$, take a pull-back diagram
\[
\xy
(-6,6)*+{X\ppr}="0";
(6,6)*+{G/e}="2";
(-6,-6)*+{X}="4";
(6,-6)*+{Y}="6";
(9,-7)*+{.}="7";
(0,0)*+{\square}="8";
{\ar^{f\ppr} "0";"2"};
{\ar_{\xi} "0";"4"};
{\ar^{\gamma_Y} "2";"6"};
{\ar_{f} "4";"6"};
\endxy
\]
Then we have $\gamma_Y^{\ast}f_+(x)=f\ppr_+\xi^{\ast}(x)$ for each $x\in \I_I(X)$.
Since any point in $X\ppr$ has a stabilizer equal to $e$, we may assume this diagram is of the form
\[
\xy
(-7,6)*+{\underset{n}{\amalg} G/e}="0";
(7,6)*+{G/e}="2";
(-7,-6)*+{X}="4";
(7,-6)*+{Y}="6";
(0,0)*+{\square}="9";
{\ar "0";"2"};
{\ar "0";"4"};
{\ar^{\gamma_Y} "2";"6"};
{\ar_{f} "4";"6"};
\endxy
\]
where $\underset{n}{\amalg} G/e$ is the direct sum of $n$-copies of $G/e$ for some natural number $n$.
Thus we have
\[ \gamma_Y^{\ast}f_+(x)=\sum_{1\le i\le n}g_i\cdot \gamma_i^{\ast}(x) \]
for some $g_i\in G$ and $\gamma_i\in\Gs(G/e,X)$\ \  $(1\le i\le n)$. Since $I$ is closed under the action of $G$, this implies $\gamma_Y^{\ast}f_+(x)\in I$, and thus $f_+(x)\in\I_I(Y)$.
\end{proof}

\section{Operations on ideals}
\label{section3}
Some operations for the ideals of Tambara functors can be performed, in the same manner as in the ordinary ideal theory for commutative rings.

\begin{notation}
Let $T$ be a Tambara functor on $G$. 
If $\G$ is a subset of $\underset{X\in\Ob(\Gs)}{\coprod}T(X)$, we say \lq $\G$ is a subset of $T$', and write abbreviately $\G\subseteq T$.

For any subset $\G\subseteq T$, we put $\G(X)=\G\cap T(X)\ \ ({}^{\forall}X\in\Ob(\Gs))$. In this notation, remark that we have $\,\G=\!\!\underset{X\in\Ob(\Gs)}{\coprod}\G(X)$.
\end{notation}

\subsection{Intersection of ideals}
We can take the intersection of ideals. This enables us to consider a {\it generator} of an ideal.
\begin{prop}\label{PropIntersection}
Let $T$ be a Tambara functor and let $\{\I_{\lambda}\}_{\lambda\in\Lambda}$ be a family of ideals of $T$.
If we define a subset $\mathscr{K}\subseteq T$ by

\[ \mathscr{K}(X)=\{\underset{\lambda}{\bigcap}\,\I_{\lambda}(X)\}_{X\in\Ob(\Gs)} \]
for each $X\in\Ob(\Gs)$, then $\mathscr{K}$ becomes an ideal of $T$.
This is called the {\it intersection} of $\{\I_{\lambda}\}_{\lambda\in\Lambda}$, and denoted by $\mathscr{K}=\underset{\lambda}{\bigcap}\,\I_{\lambda}$. If $\{\I_{\lambda}\}_{\lambda\in\Lambda}$ is a finite set of ideals $\{ \I_1,\ldots, \I_{\ell}\}$, we also denote it by
\[ \underset{\lambda\in\Lambda}{\bigcap}\I_{\lambda}=\underset{1\le i\le \ell}{\bigcap}\I_i=\I_1\cap\cdots\cap\I_{\ell}. \]
\end{prop}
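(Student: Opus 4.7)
The proof is a routine verification that the family $\K = \{\K(X)\}_{X \in \Ob(\Gs)}$ with $\K(X) = \bigcap_\lambda \I_\lambda(X)$ satisfies conditions (i), (ii), (iii) of Definition \ref{DefIdeal}. The plan is to reduce everything to the fact that each $\I_\lambda$ is already closed under the relevant structure morphisms, using the reformulation of condition (iii) via $f_!$ from Definition \ref{DefShriek}.

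First I would note that for each fixed $X$, the set $\K(X)$ is an ideal of the commutative ring $T(X)$, since it is an intersection of a family of ring ideals $\I_\lambda(X) \subseteq T(X)$. This gives the underlying pointwise ring-ideal requirement for free.

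Next, for any $f \in \Gs(X,Y)$, conditions (i) and (ii) follow immediately: if $x \in \K(Y)$, then $x \in \I_\lambda(Y)$ for every $\lambda$, so $f^\ast(x) \in \I_\lambda(X)$ for every $\lambda$, and hence $f^\ast(x) \in \K(X)$; the argument for $f_+$ is identical. The only step that needs a moment's thought is condition (iii), which is cleanest to verify in its equivalent form (iii)$\ppr$ of Definition \ref{DefShriek}, namely $f_!(\K(X)) \subseteq \K(Y)$. Since each $\I_\lambda$ satisfies $f_!(\I_\lambda(X)) \subseteq \I_\lambda(Y)$, for any $x \in \K(X)$ we have $f_!(x) \in \I_\lambda(Y)$ for every $\lambda$, hence $f_!(x) \in \K(Y)$. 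Converting back via $f_\bullet(x) = f_!(x) + f_\bullet(0)$ recovers condition (iii).

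There is no real obstacle here; the virtue of the $f_!$ formulation is precisely that it replaces the affine inclusion $f_\bullet(\I(X)) \subseteq f_\bullet(0) + \I(Y)$ by an honest closure condition that commutes with intersections just as $f^\ast$ and $f_+$ do. I would close the proof by remarking that the isomorphism $\K(X_1 \amalg X_2) \cong \K(X_1) \times \K(X_2)$ asserted in Definition \ref{DefIdeal} is then automatic from the corresponding isomorphisms for each $\I_\lambda$.
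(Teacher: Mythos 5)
Your proof is correct and is exactly the routine verification the paper has in mind; the paper's own proof simply says the result ``immediately follows from the definition of ideals.'' Your observation that condition (iii) is most transparently checked via the $f_!$ reformulation (iii)$\ppr$ of Definition \ref{DefShriek} is the right way to make the ``immediate'' precise, since $f_!(x) = f_\bullet(x) - f_\bullet(0)$ is a well-defined element whose membership in each $\I_\lambda(Y)$ passes directly to the intersection.
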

\begin{proof}
This immediately follows from the definition of ideals.
\end{proof}

\begin{dfn}\label{DefGen}
Let $\G\subseteq T$ be a subset. The {\it ideal generated by} $\G$ is defined as the intersection of ideals containing all elements in $\G$:
\[ \bigcap\,\{\I\subseteq T\ \text{ideal}\ \mid \G\subseteq\I\ 
 \}. \]
Obviously this is the smallest ideal containing $\G$. We denote this ideal by $\langle \G\rangle$, and call $\mathscr{G}$ the {\it generator} of this ideal.
If $\G$ is a finite set $\G=\{ a_1,a_2,\ldots,a_{\ell} \}\subseteq T$, then we denote $\langle \G\rangle$ by $\langle a_1,a_2,\ldots,a_{\ell}\rangle$. In particular if $a$ is an element of $T$, then $\langle \{ a\}\rangle$ is denoted by $\langle a\rangle$.
\end{dfn}

\begin{prop}\label{PropPrincipal}
Let $T$ be a Tambara functor, and let $a\in T(A)$ be an element for some $A\in\Ob(\Gs)$. Then $\langle a\rangle$ can be calculated as
\begin{equation}\label{EqStarStar}
\langle a\rangle(X)=\{ \alpha=u_+(c\cdot(v_!w^{\ast}(a)))\mid X\overset{u}{\leftarrow}C\overset{v}{\leftarrow}D\overset{w}{\rightarrow}A,\ c\in T(C) \}
\end{equation}
for any $X\in\Ob(\Gs)$.
\end{prop}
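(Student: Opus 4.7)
The plan is to let $\mathcal{L}(X)$ denote the right-hand side of (\ref{EqStarStar}) and prove both inclusions with $\langle a\rangle(X)$. The easy direction $\mathcal{L}(X)\subseteq\langle a\rangle(X)$ comes from applying the ideal axioms in turn to an arbitrary ideal $\I$ containing $a$: from $a\in\I(A)$ one obtains $w^{\ast}(a)\in\I(D)$ by condition (i) of Definition \ref{DefIdeal}, then $v_!w^{\ast}(a)\in\I(C)$ by condition (iii$\ppr$) from Definition \ref{DefShriek}, then $c\cdot v_!w^{\ast}(a)\in\I(C)$ by ring-ideal absorption, and finally $u_+(c\cdot v_!w^{\ast}(a))\in\I(X)$ by condition (ii); taking $\I=\langle a\rangle$ itself yields the inclusion.

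For the reverse direction I would show $\mathcal{L}$ is an ideal of $T$; since $a=\id_+(1\cdot \id_!\,\id^{\ast}(a))\in\mathcal{L}(A)$, the minimality of $\langle a\rangle$ will then force $\langle a\rangle\subseteq\mathcal{L}$. Proving $\mathcal{L}$ is an ideal splits into four verifications. First, each $\mathcal{L}(X)$ must be an additive subgroup of $T(X)$ absorbing $T(X)$-multiplication: additive closure is obtained by replacing two given spans with the coproduct span $X\leftarrow C\amalg C'\leftarrow D\amalg D'\to A$ and using the functoriality of $T$ on disjoint unions, $0$ by taking $c=0$, inverses by replacing $c$ with $-c$, and absorption of $b\in T(X)$ by the projection formula (Fact \ref{FactTam}(2)):
\[ b\cdot u_+(c\cdot v_!w^{\ast}(a))=u_+\bigl(u^{\ast}(b)\cdot c\cdot v_!w^{\ast}(a)\bigr). \]
Second, closure under $f_+$ is immediate from $f_+u_+=(fu)_+$. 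Third, closure under $f^{\ast}$ follows from the Mackey pullback identity $f^{\ast}u_+=u'_+\xi^{\ast}$ (pullback of $u$ along $f$) combined with Lemma \ref{LemShriek}(3), which rewrites $\xi^{\ast}v_!$ as $v'_!(\xi')^{\ast}$ via the pullback of $v$ along $\xi$.

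The fourth check, closure under $f_!$, is the main obstacle. Lemma \ref{LemShriek}(4) applied to the exponential diagram associated to $f$ and $u$ gives $f_!u_+=\pi_+\rho_!\lambda^{\ast}$, and Lemma \ref{LemShriek}(3) rewrites $\lambda^{\ast}v_!=\tilde v_!\,\tilde w^{\ast}$ via the pullback of $v$ along $\lambda$, so that
\[ f_!\bigl(u_+(c\cdot v_!w^{\ast}(a))\bigr)=\pi_+\rho_!\bigl(\lambda^{\ast}(c)\cdot \tilde v_!((w\tilde w)^{\ast}(a))\bigr). \]
Exhibiting this as a single element of $\mathcal{L}(Y)$ is subtle because $\rho_!\circ\tilde v_!$ is not literally $(\rho\tilde v)_!$, so no obvious composite span represents it. To finish I would distribute $\rho_!$ across the product by Lemma \ref{LemShriek}(1), expand $\tilde v_!=\tilde v_{\bullet}-\tilde v_{\bullet}(0)$ and use Fact \ref{FactTam}(1) to rewrite the $\tilde v_{\bullet}(0)$-correction as an additive transfer along the inclusion of the complement of $\tilde v(E)$, and then invoke Fact \ref{FactTam}(3) (addition formula) together with the additive closure established in the first step to absorb the resulting discrepancy and present the result in the form required by $\mathcal{L}(Y)$.
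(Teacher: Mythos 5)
Your overall strategy is the same as the paper's: show the right-hand side, call it $\I_a$, satisfies $\I_a\subseteq\langle a\rangle$ trivially, then prove $\I_a$ is itself an ideal containing $a$, handling each $X$-level ideal axiom and then closure under $f_+$, $f^\ast$, and $f_!$. Your treatments of additive closure, absorption via the projection formula, $f_+$-closure, and $f^\ast$-closure via the Mackey square and Lemma \ref{LemShriek}(3) all match the paper step for step.

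The interesting divergence is in the $f_!$ step, and here your scepticism is warranted: the paper asserts that
\[ f\ppr_!\bigl(p^{\ast}(c)\cdot v\ppr_!((w\circ p\ppr)^{\ast}(a))\bigr)=(f\ppr_!p^{\ast}(c))\cdot\bigl((f\ppr\circ v\ppr)_!((w\circ p\ppr)^{\ast}(a))\bigr), \]
which after applying Lemma \ref{LemShriek}(1) amounts to $f\ppr_!\circ v\ppr_!=(f\ppr\circ v\ppr)_!$. This identity is \emph{false} in general: already for the fixed point functor $\mathcal{P}_R$ with $G$ trivial, taking $v\ppr\colon\{1\}\hookrightarrow\{1,2\}$ and $f\ppr\colon\{1,2\}\to\{1\}$ gives $f\ppr_!v\ppr_!(r)=0$ while $(f\ppr\circ v\ppr)_!(r)=r$. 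So you have correctly identified that the composite $\rho_!\circ\tilde v_!$ is not $(\rho\circ\tilde v)_!$, and in fact the paper's own displayed chain of equalities has a gap at exactly this point (the conclusion $f_!(\alpha)\in\I_a(Y)$ is still true, but the paper's final equality is not).

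Where your proposal falls short is that the repair is only gestured at. The plan — expand $\tilde v_!=\tilde v_\bullet-\tilde v_\bullet(0)$, identify $\tilde v_\bullet(0)$ via Fact \ref{FactTam}(1), distribute $\rho_!$ by multiplicativity, and then control the difference via the addition formula together with the already-established additive closure of $\I_a(Y)$ — is plausible but not carried out, and it is precisely the delicate part. As sketched it does not yet constitute a proof.

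There is a much shorter fix that sidesteps the computation entirely. Observe first that in $(\ref{EqStarStar})$ one may assume $v$ is surjective without loss of generality: factor $v=\iota\circ\pi$ with $\pi\colon D\twoheadrightarrow v(D)$ and $\iota\colon v(D)\hookrightarrow C$; then $\pi_!=\pi_\bullet$ (Lemma \ref{LemShriek}(2)), $\iota_!=\iota_+$ (direct check using $C\cong v(D)\amalg(C\setminus v(D))$), hence $v_!=\iota_+\pi_!$, and by the projection formula
\[ u_+\bigl(c\cdot v_!w^{\ast}(a)\bigr)=u_+\iota_+\bigl(\iota^{\ast}(c)\cdot\pi_!w^{\ast}(a)\bigr)=(u\circ\iota)_+\bigl(\iota^{\ast}(c)\cdot\pi_!w^{\ast}(a)\bigr), \]
an element of the same form with $\pi$ surjective. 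Once $v$ is surjective, its pullback $v\ppr$ is surjective, so $v\ppr_!=v\ppr_\bullet$, $v\ppr_\bullet(0)=0$, and the identity $f\ppr_!v\ppr_!=(f\ppr\circ v\ppr)_!$ does hold by a two-line check; the paper's closing computation is then literally correct. I would replace your addition-formula plan with this reduction.
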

\begin{proof}
Denote the right hand side by $\I_a(X)$. Since $\langle a\rangle$ is closed under $u_+$, $v_!$, $w^{\ast}$, obviously we have $\langle a\rangle(X)\supseteq\I_a(X)$ for any $X\in\Ob(\Gs)$. Thus it suffices to show that $\I_a=\{ \I_a(X)\}_{X\in\Ob(\Gs)}$ forms an ideal of $T$.

First we show $\I_a(X)$ is an ideal of $T(X)$ for each $X\in\Ob(\Gs)$. Suppose we are given two elements in $\I_a(X)$
\[ \alpha=u_+(c\cdot(v_!w^{\ast}(a)))\quad \text{and} \quad \alpha\ppr=u\ppr_+(c\ppr\cdot(v\ppr_!w^{\prime\ast}(a))) \]
with
\begin{eqnarray*}
X\overset{u}{\leftarrow}C\overset{v}{\leftarrow}D\overset{w}{\rightarrow}A,\ \ \ c\in T(C),\\
X\overset{u\ppr}{\leftarrow}C\ppr\overset{v\ppr}{\leftarrow}D\ppr\overset{w\ppr}{\rightarrow}A,\ \ \ c\ppr\in T(C\ppr).
\end{eqnarray*}
Then for
\[ X\overset{u\cup u\ppr}{\longleftarrow}C\amalg C\ppr\overset{v\amalg v\ppr}{\longleftarrow}D\amalg D\ppr\overset{w\cup w\ppr}{\longrightarrow}A,\ \ \ (c,c\ppr)\in T(C\amalg C\ppr), \]
we have
\begin{eqnarray*}
\alpha+\alpha\ppr%
=(u\cup u\ppr)_+((c,c\ppr)\cdot(v\amalg v\ppr)_!(w\cup w\ppr)^{\ast}(a))\in \I_a(X).
\end{eqnarray*}
(Here we used the identification $T(C\amalg C\ppr)\cong T(C)\times T(C\ppr)$.)
Besides, for any $r\in T(X)$, we have
\[ r\alpha=r\cdot(u_+(c\cdot(v_!w^{\ast}(a))))=u_+(\,(u^{\ast}(r)\cdot c)\cdot(v_!w^{\ast}(a))\,)\in \I_a(X). \]
Thus $\I_a(X)\subseteq T(X)$ is an ideal.

It remains to show $\I_a$ is closed under $(\,)_+$, $(\,)_{\bullet}$ and $(\,)^{\ast}$.

Let $f\in\Gs(X,Y)$ be any morphism. For any $\alpha=u_+(c\cdot(v_!w^{\ast}(a)))\in\I_a(X)$ as in $(\ref{EqStarStar})$, we have
\[ f_+(\alpha)=(f\circ u)_+(c\cdot(v_!w^{\ast}(a)))\in\I_a(Y). \]
Thus $f_+(\I_a(X))\subseteq\I_a(Y)$.

If we take an exponential diagram
\[
\xy
(-14,6)*+{X}="0";
(-14,-6)*+{Y}="2";
(0,6)*+{C}="4";
(14,6)*+{Z}="6";
(14,-6)*+{C\ppr}="8";
(0,0)*+{\mathit{exp}}="10";
{\ar_{f} "0";"2"};
{\ar_{u} "4";"0"};
{\ar_>>>>>{p} "6";"4"};
{\ar^>>>>{f\ppr} "6";"8"};
{\ar^{u\ppr} "8";"2"};
\endxy
\]
and a pull-back
\[
\xy
(-6,6)*+{D}="0";
(6,6)*+{D\ppr}="2";
(-6,-6)*+{C}="4";
(6,-6)*+{Z}="6";
(9,-7)*+{,}="7";
(0,0)*+{\square}="8";
{\ar_{p\ppr} "2";"0"};
{\ar_{v} "0";"4"};
{\ar^{v\ppr} "2";"6"};
{\ar^{p} "6";"4"};
\endxy
\]
then we have
\begin{eqnarray*}
f_!(\alpha)&=&f_!u_+(c\cdot(v_!w^{\ast}(a)))\ =\ u\ppr_+f\ppr_!p^{\ast}(c\cdot(v_!w^{\ast}(a)))\\
&=&u\ppr_+(\,(f\ppr_!p^{\ast}(c))\cdot((f\ppr\circ v\ppr)_!(w\circ p\ppr)^{\ast}(a))\,)\ \in\ \I_a(Y).
\end{eqnarray*}
Thus $f_!(\I_a(X))\subseteq\I_a(Y)$.

Let $g\in\Gs(W,X)$ be any morphism. For any $\alpha=u_+(c\cdot(v_!w^{\ast}(a)))\in\I_a(X)$ as in $(\ref{EqStarStar})$, if we take pull-backs
\[
\xy
(-12,6)*+{W}="2";
(0,6)*+{C\ppr}="4";
(12,6)*+{D\ppr}="6";
(-12,-6)*+{X}="12";
(0,-6)*+{C}="14";
(12,-6)*+{D}="16";
(15,-7)*+{,}="17";
(-6,0)*+{\square}="1";
(6,0)*+{\square}="3";
{\ar_{u\ppr} "4";"2"};
{\ar_{v\ppr} "6";"4"};
{\ar^{u} "14";"12"};
{\ar^{v} "16";"14"};
{\ar_{g} "2";"12"};
{\ar^{g\ppr} "4";"14"};
{\ar^{g^{\prime\prime}} "6";"16"};
\endxy
\]
then we have
\begin{eqnarray*}
g^{\ast}(\alpha)&=&g^{\ast}u_+(c\cdot(v_!w^{\ast}(a)))\ =\ u\ppr_+g^{\prime\ast}(c\cdot(v_!w^{\ast}(a)))\\
&=&u\ppr_+(\, g^{\prime\ast}(c)\cdot(v\ppr_!(w\circ g^{\prime\prime})^{\ast}(a))\,)\ \ \in\I_a(W).
\end{eqnarray*}
Thus $g^{\ast}(\I_a(X))\subseteq\I_a(W)$.
\end{proof}

\smallskip

\subsection{Some remarks on Noetherian property}

\begin{prop}
Let $T$ be a Tambara functor on $G$. If $T(X)$ is a Noetherian ring for each transitive $X\in\Ob(\Gs)$, then any sequence of ideals
\begin{equation}
\I_1\subseteq \I_2\subseteq \I_3\subseteq\cdots\subseteq T
\label{SeqIdeal}
\end{equation}
should be stable.
\end{prop}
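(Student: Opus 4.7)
The plan is to reduce the stabilization of the chain \eqref{SeqIdeal} of ideals in $T$ to the stabilization of finitely many ascending chains of ideals in Noetherian rings.

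The key observation, noted in Definition \ref{DefIdeal}, is that for any ideal $\J\subseteq T$ and any decomposition $X=X_1\amalg X_2$, one has $\J(X)\cong\J(X_1)\times\J(X_2)$. Since every $X\in\Ob(\Gs)$ decomposes as a finite disjoint union of transitive $G$-sets, an ideal $\J\subseteq T$ is determined by its values $\J(X)$ on transitive $X$. Moreover, up to isomorphism, transitive $G$-sets are parametrized by conjugacy classes of subgroups of $G$, which form a finite set since $G$ is finite. Fix a set of representatives $X_1,\ldots,X_r$ of the isomorphism classes of transitive $G$-sets.

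For each $i$ with $1\le i\le r$, the sequence
\[ \I_1(X_i)\subseteq \I_2(X_i)\subseteq \I_3(X_i)\subseteq\cdots\subseteq T(X_i) \]
is an ascending chain of ideals in the Noetherian ring $T(X_i)$, hence stabilizes: there exists $N_i$ with $\I_n(X_i)=\I_{N_i}(X_i)$ for all $n\ge N_i$. Set $N=\max\{N_1,\ldots,N_r\}$. Then for every transitive $X_i$ and every $n\ge N$, we have $\I_n(X_i)=\I_N(X_i)$. By the first paragraph, this forces $\I_n(X)=\I_N(X)$ for every $X\in\Ob(\Gs)$, so $\I_n=\I_N$ for all $n\ge N$, and the chain \eqref{SeqIdeal} is stable.

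No step presents a real obstacle here; the only subtlety worth emphasizing is the reduction to transitive $G$-sets, which relies on the product decomposition of ideals along disjoint unions built into Definition \ref{DefIdeal}.
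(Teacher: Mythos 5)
Your proof is correct and takes essentially the same approach as the paper's: reduce to transitive $G$-sets, use the Noetherian hypothesis on each to get a stabilization index, and take the maximum. The only difference is that you explicitly observe that the maximum is over finitely many isomorphism classes of transitive $G$-sets (a point the paper glosses over by writing the max over all transitive $X$), which is a small but welcome bit of extra care.
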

\begin{proof}
Assume we are given a sequence of ideals $(\ref{SeqIdeal})$.
For each transitive $X$, a sequence of ideals
\[ \I_1(X)\subseteq \I_2(X)\subseteq \I_3(X)\subseteq\cdots\subseteq T(X) \]
is induced, and there exists a natural number $n_X$ such that $\I_{n_X}(X)=\underset{n}{\cup} \I_n(X)$, since $T(X)$ is Noetherian.
If we put $n_0=\mathrm{max}\{ n_X\mid X\in\Ob(\Gs)\ \text{transitive}\}$, then we obtain $\I_{n_0}=\underset{n}{\cup}\I_n$.
\end{proof}

\begin{prop}
Let $T$ be a Tambara functor, and $\I\subseteq T$ be an ideal. If $\I$ is generated by a finite subset $\G\subseteq T$, then $\I$ can be generated by a single element.
\end{prop}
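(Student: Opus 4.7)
The plan is to reduce to a single element by packaging the finite list of generators into one element living on the disjoint union of their carriers. Suppose $\mathscr{G}=\{a_1,\ldots,a_\ell\}$ with $a_i\in T(A_i)$. Set $A=A_1\amalg\cdots\amalg A_\ell$ and let $\iota_i\colon A_i\hookrightarrow A$ be the canonical inclusions. My candidate for the single generator is
\[
a \;=\; \sum_{i=1}^{\ell}(\iota_i)_+(a_i)\ \in\ T(A),
\]
i.e.\ the element of $T(A)\cong T(A_1)\times\cdots\times T(A_\ell)$ whose components are $a_1,\ldots,a_\ell$. I will then show $\langle a\rangle=\langle a_1,\ldots,a_\ell\rangle$.

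For the inclusion $\langle a\rangle\subseteq\langle a_1,\ldots,a_\ell\rangle$, I only need to observe that $a$ itself lies in $\langle a_1,\ldots,a_\ell\rangle(A)$. Each $a_i$ belongs to $\langle a_1,\ldots,a_\ell\rangle(A_i)$ by definition, and condition (ii) of Definition \ref{DefIdeal} ensures $(\iota_i)_+(a_i)\in\langle a_1,\ldots,a_\ell\rangle(A)$; summing over $i$ gives $a\in\langle a_1,\ldots,a_\ell\rangle(A)$, and since $\langle a\rangle$ is the smallest ideal containing $a$, the inclusion follows.

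For the reverse inclusion $\langle a_1,\ldots,a_\ell\rangle\subseteq\langle a\rangle$, it is enough to show each $a_i$ lies in $\langle a\rangle(A_i)$. The standard Mackey identity for disjoint unions gives $\iota_i^{\ast}(\iota_j)_+ = \mathrm{id}$ if $i=j$ and $0$ otherwise (the latter because the relevant pullback is empty, so the transfer is zero), so $\iota_i^{\ast}(a)=a_i$. Since ideals are closed under restrictions by condition (i), we get $a_i\in\langle a\rangle(A_i)$ for every $i$, which finishes the argument.

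The only real subtlety is confirming that the decomposition of $T(A)$ over a disjoint union behaves as claimed, so that both the definition of $a$ via additive transfers and the identities $\iota_i^{\ast}(a)=a_i$ are legitimate; this is a routine consequence of the Mackey-functor structure on $T^{\alpha}$ and the last clause of Definition \ref{DefIdeal}. Once this is in hand, the two inclusions above are essentially immediate from conditions (i) and (ii), and the multiplicative transfer condition (iii) plays no role.
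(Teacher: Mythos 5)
Your proposal is correct and matches the paper's proof essentially line for line: both package the finite generating set into a single element of $T(A_1\amalg\cdots\amalg A_\ell)$ via additive transfers along the inclusions, and both recover the individual generators via restrictions $\iota_i^{\ast}$. You supply slightly more detail on the Mackey identity giving $\iota_i^{\ast}(a)=a_i$, but the argument is the same.
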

\begin{proof}
Suppose $\I$ is generated by $\G=\{ s_1,\ldots,s_n\}$ for some $s_i\in T(X_i)$.
Put $X=X_1\amalg\cdots\amalg X_n$, let $\iota_i\colon X_i\hookrightarrow X$ be the inclusion $(1\le {}^{\forall}i\le n)$, and put
\begin{equation}
s=\iota_{1+}(s_1)+\cdots+\iota_{n+}(s_n).
\label{Eq_s}
\end{equation} It suffices to show that an ideal $\J$ of $T$ contains $\G$ if and only if $\J$ contains $s$.
This follows immediately from $(\ref{Eq_s})$ and $s_i=\iota_i^{\ast}(s)\ (1\le{}^{\forall}i\le n)$.
\end{proof}

\smallskip

\subsection{Product of ideals}
\begin{prop}\label{PropProduct}
Let $\I,\J\subseteq T$ be ideals. If we define a subset $\mathscr{K}\subseteq T$ by
\[ \mathscr{K}(X)=\{ x\in T(X)\mid x=p_+(ab)\ \text{for some}\ p\in\Gs(A,X),a\in\I(A),b\in\J(A) \} \]
for each $X\in\Ob(\Gs)$, then $\mathscr{K}$ is an ideal of $T$. This is called the product of $\I$ and $\J$, and denoted by $\mathscr{K}=\I\!\!\!\J$.
\end{prop}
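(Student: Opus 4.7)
The plan is to verify that $\mathscr{K}$ satisfies all requirements in Definition \ref{DefIdeal}, namely that each $\mathscr{K}(X)$ is an ideal of the ring $T(X)$ and that $\mathscr{K}$ is closed under $f^{\ast}$, $f_+$, and $f_!$ for every $f\in\Gs(X,Y)$ (using condition (iii)$\ppr$ from Definition \ref{DefShriek}).

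First I would check the ring-theoretic ideal structure of $\mathscr{K}(X)$. For closure under addition, given $x=p_+(ab)$ and $x\ppr=p\ppr_+(a\ppr b\ppr)$ with $p\colon A\to X$ and $p\ppr\colon A\ppr\to X$, I would use the folding map $p\cup p\ppr\colon A\amalg A\ppr\to X$ together with the identification $T(A\amalg A\ppr)\cong T(A)\times T(A\ppr)$ to write $x+x\ppr=(p\cup p\ppr)_+((a,a\ppr)(b,b\ppr))$; since $(a,a\ppr)\in\I(A\amalg A\ppr)$ and $(b,b\ppr)\in\J(A\amalg A\ppr)$, this lies in $\mathscr{K}(X)$. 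For closure under multiplication by $r\in T(X)$, I would apply the projection formula (Fact \ref{FactTam}(2)) to obtain $r\cdot p_+(ab)=p_+(p^{\ast}(r)\cdot ab)=p_+((p^{\ast}(r)a)\cdot b)$, and then use that $\I(A)$ is an ideal of $T(A)$.

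Next I would handle the three structural conditions. For $f_+$, the closure is immediate by composing transfers: $f_+p_+(ab)=(f\circ p)_+(ab)\in\mathscr{K}(Y)$. For $f^{\ast}$, I would choose a pull-back square with $p\colon A\to Y$ along $f\colon X\to Y$, obtaining maps $p\ppr\colon A\ppr\to X$ and $g\ppr\colon A\ppr\to A$; the Mackey relation $f^{\ast}p_+=p\ppr_+g^{\prime\ast}$ then gives $f^{\ast}p_+(ab)=p\ppr_+(g^{\prime\ast}(a)\,g^{\prime\ast}(b))$, which lies in $\mathscr{K}(X)$ because $\I$ and $\J$ are closed under restriction.

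The main obstacle, as with most proofs in this setting, is condition (iii)$\ppr$, the closure under $f_!$. Here I would invoke Lemma \ref{LemShriek}(4): form an exponential diagram
\[
\xy
(-14,6)*+{X}="0";
(-14,-6)*+{Y}="2";
(0,6)*+{A}="4";
(14,6)*+{Z}="6";
(14,-6)*+{\Pi}="8";
(0,0)*+{\mathit{exp}}="10";
{\ar_{f} "0";"2"};
{\ar_{p} "4";"0"};
{\ar_>>>>>{\lambda} "6";"4"};
{\ar^>>>>{\rho} "6";"8"};
{\ar^{\pi} "8";"2"};
\endxy
\]
so that $f_!p_+=\pi_+\rho_!\lambda^{\ast}$. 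Then for $x=p_+(ab)\in\mathscr{K}(X)$, applying this identity and the multiplicativity of $\rho_!$ (Lemma \ref{LemShriek}(1)) gives $f_!(x)=\pi_+\bigl(\rho_!\lambda^{\ast}(a)\cdot\rho_!\lambda^{\ast}(b)\bigr)$. Since $\I$ and $\J$ are closed under $\lambda^{\ast}$ and $\rho_!$, the factors lie in $\I(\Pi)$ and $\J(\Pi)$ respectively, so $f_!(x)\in\mathscr{K}(Y)$ by definition. This completes the verification.
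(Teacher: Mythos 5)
Your proof is correct and follows essentially the same route as the paper's: the same disjoint-union trick for additive closure, the projection formula for closure under multiplication by $T(X)$, composition of transfers for $f_+$, the Mackey pull-back relation for $f^{\ast}$, and Lemma \ref{LemShriek}(4) together with the multiplicativity of $\rho_!$ for the key $f_!$-closure. No gaps.
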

\begin{proof}
First we show $\mathscr{K}(X)\subseteq T(X)$ is an ideal for each $X$. Let $x=p_+(ab)$ and $x\ppr=p\ppr_+(a\ppr b\ppr)$ be any pair of elements in $\mathscr{K}(X)$, where
\begin{eqnarray*}
p\in\Gs(A,X), a\in\I(A), b\in\J(A),\\
p\ppr\in\Gs(A\ppr,X), a\ppr\in\I(A\ppr), b\ppr\in\J(A\ppr).
\end{eqnarray*}
Then, for $p\cup p\ppr\in\Gs(A\amalg A\ppr,X)$, $(a,a\ppr)\in\I(A\amalg A\ppr)$ and $(b,b\ppr)\in\J(A\amalg A\ppr)$, we have
\begin{eqnarray*}
x+x\ppr&=&p_+(ab)+p\ppr_+(a\ppr b\ppr)\ =\ (p\cup p\ppr)_+(ab,a\ppr b\ppr)\\
&=&(p\cup p\ppr)_+((a,a\ppr)\cdot(b,b\ppr))\ \in\mathscr{K}(X).
\end{eqnarray*}
(Here we used the identification $\I(A\amalg A\ppr)\cong\I(A)\times\I(A\ppr)$, $\J(A\amalg A\ppr)\cong\I(A)\times\J(A\ppr)$.)
Also for any $r\in T(X)$, we have
\[ rx=rp_+(ab)=p_+(p^{\ast}(r)ab)\ \in\mathscr{K}(X). \]
Thus $\mathscr{K}(X)$ is an ideal of $T(X)$.

To show $\mathscr{K}$ is an ideal of $T$, it suffices to show $\mathscr{K}$ is closed under $f^{\ast},f_+,f_{!}$ for any $f\in\Gs(X,Y)$. Let $x=p_+(ab)$ be any element in $\mathscr{K}(X)$ as above. Then we have $f_+(x)=(f\circ p)_+(ab)\in\K(Y)$. Thus $f_+(\K(X))\subseteq\K(Y)$. Also, if we take an exponential diagram
\[
\xy
(-14,6)*+{X}="0";
(-14,-6)*+{Y}="2";
(0,6)*+{A}="4";
(14,6)*+{Z}="6";
(14,-6)*+{\Pi}="8";
(17,-7)*+{,}="9";
(0,0)*+{\mathit{exp}}="10";
{\ar_{f} "0";"2"};
{\ar_{p} "4";"0"};
{\ar_>>>>>{\lambda} "6";"4"};
{\ar^>>>>{\rho} "6";"8"};
{\ar^{\pi} "8";"2"};
\endxy
\]
then we have
\[ f_!(x)=f_!\, p_+(ab)\ =\ \pi_+\rho_!\,\lambda^{\ast}(ab)=\pi_+(\rho_!\,\lambda^{\ast}(a)\cdot\rho_!\,\lambda^{\ast}(b))\in\K(Y). \]
Thus $f_!(\K(X))\subseteq\K(Y)$.
%

Let $y=q_+(cd)$ be any element in $\K(Y)$, where $q\in\Gs(B,Y)$, $c\in\I(B)$, $d\in\J(B)$. If we take a pull-back diagram
\[
\xy
(-6,6)*+{A}="0";
(6,6)*+{B}="2";
(-6,-6)*+{X}="4";
(6,-6)*+{Y}="6";
(9,-7)*+{,}="7";
(0,0)*+{\square}="8";
{\ar^{g} "0";"2"};
{\ar_{p} "0";"4"};
{\ar^{q} "2";"6"};
{\ar_{f} "4";"6"};
\endxy
\]
then we have
\[ f^{\ast}(y)=f^{\ast}q_+(cd)=p_+g^{\ast}(cd)=p_+(g^{\ast}(c)g^{\ast}(d))\in\K(X). \]
Thus $f^{\ast}(\K(Y))\subseteq\K(X)$.
\end{proof}

\begin{rem}\label{RemProduct}
Let $T$ be a Tambara functor.
\begin{enumerate}
\item For each ideals $\I$ and $\J$ of $T$, we have $\I\!\!\!\J\subseteq\I\cap\J$.
\item For each ideals $\I$ and $\J$ of $T$, we have
\[ \I(X)\!\J(X)\subseteq\I\!\!\!\J(X) \]
for any $X\in\Ob(\Gs)$. Here, the left hand side is the ordinary product of ideals in $T(X)$.
\item If $\I_1\subseteq \J_1$, $\I_2\subseteq \J_2$ are ideals of $T$, then we have $\I_1\!\I_2\subseteq\J_1\!\J_2$.
\end{enumerate}
\end{rem}
\begin{proof}
These immediately follows from Proposition \ref{PropIntersection}.
\end{proof}

\begin{prop}\label{PropFeb02}
If $\I_1,\I_2,\I_3$ are ideals of a Tambara functor $T$, then we have
\[ (\I_1\I_2)\I_3=\I_1(\I_2\I_3). \]
More precisely, for any $X\in\Ob(\Gs)$, both $((\I_1\I_2)\I_3)(X)$ and $(\I_1(\I_2\I_3))(X)$ agree with the set
\[ \K_{1,2,3}(X)=\Set{ p_+(a_1a_2a_3)\in T(X)|\, p\in\Gs(A,X),\ \begin{array}{c}a_i\in\I_i(A)\\ (i=1,2,3)\end{array} }. \]
Inductively, for any ideals $\I_1,\I_2,\ldots,\I_{\ell}\subseteq T$, we have
\[ (\I_1\I_2\cdots\I_{\ell})(X)=\Set{ p_+(a_1a_2\cdots a_{\ell})|\, p\in\Gs(A,X),\ \begin{array}{c}a_i\in\I_i(A)\\ (1\le i\le \ell) \end{array} }. \]
Especially, $\I_1\I_2\cdots\I_{\ell}$ does not depend on the order of the products. For each $n\in\mathbb{N}$, we denote the $n$-times product of an ideal $\I\subseteq T$ by $\I^n$.
\end{prop}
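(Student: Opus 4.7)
The plan is to first establish the case $\ell = 3$ by showing that both $((\I_1\I_2)\I_3)(X)$ and $(\I_1(\I_2\I_3))(X)$ coincide with the set $\K_{1,2,3}(X)$; associativity then follows immediately, and the general $\ell$-fold statement is obtained by induction on $\ell$. Order-independence falls out at the end because the eventual set description is manifestly symmetric in the $a_i$.

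For the inclusion $\K_{1,2,3}(X) \subseteq ((\I_1\I_2)\I_3)(X)$, take any $\alpha = p_+(a_1 a_2 a_3)$ with $p \in \Gs(A,X)$ and $a_i \in \I_i(A)$. Since $a_1 a_2 = (\id_A)_+(a_1 \cdot a_2)$ already lies in $(\I_1\I_2)(A)$ by the very definition of the product of two ideals, we immediately get $\alpha = p_+((a_1 a_2) \cdot a_3) \in ((\I_1\I_2)\I_3)(X)$. For the reverse inclusion, suppose $\alpha = p_+(\beta \cdot a_3)$ with $p \in \Gs(A,X)$, $\beta \in (\I_1\I_2)(A)$, $a_3 \in \I_3(A)$, and expand $\beta = q_+(a_1 a_2)$ with $q \in \Gs(B,A)$, $a_i \in \I_i(B)$. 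Applying the projection formula (Fact \ref{FactTam}(2)) gives
\[ \beta \cdot a_3 = q_+(a_1 a_2) \cdot a_3 = q_+(a_1 a_2 \cdot q^{\ast}(a_3)), \]
so $\alpha = (p \circ q)_+(a_1 \cdot a_2 \cdot q^{\ast}(a_3))$; by axiom (i) of Definition \ref{DefIdeal}, $q^{\ast}(a_3) \in \I_3(B)$, and hence this element lies in $\K_{1,2,3}(X)$. A completely parallel calculation — writing $\alpha = p_+(a_1 \cdot \gamma)$ with $\gamma = q_+(a_2 a_3) \in (\I_2\I_3)(A)$ and again invoking the projection formula on $q$ — yields $(\I_1(\I_2\I_3))(X) = \K_{1,2,3}(X)$, so the two bracketings agree.

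For the general $\ell$-fold statement, I would proceed by induction. Once associativity is in hand, $\I_1 \cdots \I_\ell$ is well-defined as, say, $(\I_1 \cdots \I_{\ell-1}) \I_\ell$; any element at $X$ has the form $p_+(\beta \cdot a_\ell)$ with $\beta \in (\I_1 \cdots \I_{\ell-1})(A)$, and the inductive characterization of $\beta$ together with the same projection-formula collapse as above expresses it as a single transfer $(p\circ q)_+(a_1 \cdots a_\ell)$. Conversely, every $p_+(a_1 \cdots a_\ell)$ lies in $\I_1 \cdots \I_\ell$ by an obvious extension of the easy inclusion at $\ell = 3$. Order-independence follows at once because the final set description is symmetric in the factors $a_i$.

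The only non-routine step is the projection-formula collapse that merges the two transfers $p_+, q_+$ into a single $(p \circ q)_+$; the reason this step still lands inside $\I_3$ (and, by symmetry, inside $\I_1$ in the other bracketing) is the closure of ideals under restriction, i.e.\ Definition \ref{DefIdeal}(i). Without that axiom, $q^{\ast}(a_3)$ would not qualify as an element of $\I_3(B)$ and the argument would not close. Everything else is formal bookkeeping on transfers and pullbacks.
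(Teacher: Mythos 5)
Your proof is correct and follows essentially the same route as the paper's: both arguments reduce a nested transfer $p_+(\,\cdot\, q_+(\,\cdot\,))$ to a single transfer $(p\circ q)_+$ via the projection formula, using closure of ideals under restriction to stay inside $\I_i$. The paper only writes out one bracketing and says the other is analogous, whereas you spell out both, but there is no substantive difference in method.
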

\begin{proof}
We only show
\[ ((\I_1\I_2)\I_3)(X)=\K_{1,2,3}(X) \]
for each $X\in\Ob(\Gs)$. $\K_{1,2,3}(X)=(\I_1(\I_2\I_3))(X)$ can be shown in the same way.

By definition of the product, we have
\begin{equation}\label{Eq_Three}
((\I_1\I_2)\I_3)(X)=\Set{ p_+(b_1q_+(b_2b_3))| \begin{array}{c}p\in\Gs(A,X),\\ q\in\Gs(B,A),\end{array}\ \begin{array}{c}b_1\in\I_1(A),\\ b_2\in\I_2(B),\\ b_3\in\I_3(B)\end{array} }.
\end{equation}

Thus $((\I_1\I_2)\I_3)(X)\supseteq \K_{1,2,3}(X)$ follows if we put $B=A$, $q=\id_A$ and $b_i=a_i\ (1\le i\le 3)$.

Conversely, any element $p_+(b_1q_+(b_2b_3))$ as in $(\ref{Eq_Three})$ is written as
\begin{eqnarray*}
p_+(b_1q_+(b_2b_3))&=&p_+(q_+(q^{\ast}(b_1)b_2b_3))\\
&=&(p\circ q)_+(q^{\ast}(b_1)b_2b_3))
\end{eqnarray*}
by the projection formula, which means $((\I_1\I_2)\I_3)(X)\subseteq \K_{1,2,3}(X)$.
\end{proof}

\begin{lem}\label{LemPullAdd}
Let
\[
\xy
(-6,6)*+{C_1}="0";
(6,6)*+{C}="2";
(-6,-6)*+{B}="4";
(6,-6)*+{C_2}="6";
(0,0)*+{\square}="8";
{\ar_{p_1} "2";"0"};
{\ar_{u_1} "0";"4"};
{\ar^{p_2} "2";"6"};
{\ar^{u_2} "6";"4"};
\endxy
\]
be a pull-back diagram in $\Gs$, and put
\[ u=u_1\circ p_1=u_2\circ p_2. \]
Then for any $\alpha_1\in T(C_1)$ and $\alpha_2\in T(C_2)$, we have
\[ u_{1+}(\alpha_1)\cdot u_{2+}(\alpha_2)=u_+(p_1^{\ast}(\alpha_1)\cdot p_2^{\ast}(\alpha_2)). \]
\end{lem}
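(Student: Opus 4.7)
The plan is to reduce the identity to two standard manipulations: the projection formula of Fact \ref{FactTam}(2), and the pull-back compatibility built into the Mackey functor structure of $T^{\alpha}$ (i.e.\ the double-coset/Mackey formula $u_2^{\ast}u_{1+}=p_{2+}p_1^{\ast}$ coming from the given pull-back square). Both of these tools are already available, so no new machinery is needed.

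Concretely, I would start from the left-hand side $u_{1+}(\alpha_1)\cdot u_{2+}(\alpha_2)$ and apply the projection formula to pull $u_{2+}$ out, obtaining
\[ u_{1+}(\alpha_1)\cdot u_{2+}(\alpha_2)=u_{2+}\bigl(\alpha_2\cdot u_2^{\ast}u_{1+}(\alpha_1)\bigr). \]
Next, I would invoke the Mackey-functor axiom for the pull-back square to rewrite $u_2^{\ast}u_{1+}(\alpha_1)=p_{2+}p_1^{\ast}(\alpha_1)$, giving
\[ u_{2+}\bigl(\alpha_2\cdot p_{2+}p_1^{\ast}(\alpha_1)\bigr). \]
A second application of the projection formula (this time along $p_2$) moves $p_{2+}$ outside, and functoriality of the additive transfer along $u=u_2\circ p_2$ collapses $u_{2+}p_{2+}$ into $u_+$, yielding the desired right-hand side
\[ u_+\bigl(p_1^{\ast}(\alpha_1)\cdot p_2^{\ast}(\alpha_2)\bigr). \]

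There is no genuine obstacle here; the only point to be careful about is the choice of which transfer to push outside first (equivalently, which pull-back decomposition of $u_2^{\ast}u_{1+}$ versus $u_1^{\ast}u_{2+}$ to invoke), but by symmetry either choice works. Everything else is a formal consequence of the Mackey and projection-formula axioms already recorded in Fact \ref{FactTam}.
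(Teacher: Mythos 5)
Your proposal is correct and is essentially the same argument the paper uses: projection formula along $u_2$, the Mackey pull-back axiom $u_2^{\ast}u_{1+}=p_{2+}p_1^{\ast}$, a second projection formula along $p_2$, and functoriality $u_{2+}p_{2+}=u_+$.
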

\begin{proof}
By the projection formula, we have
\begin{eqnarray*}
u_{1+}(\alpha_1)\cdot u_{2+}(\alpha_2)&=&u_{2+}((u_2^{\ast}u_{1+}(\alpha_1))\cdot\alpha_2)\ =\ u_{2+}((p_{2+}p_1^{\ast}(\alpha_1))\cdot\alpha_2)\\
&=&u_{2+}(p_{2+}(p_1^{\ast}(\alpha_1)\cdot p_2^{\ast}(\alpha_2)))\ =\ u_+(p_1^{\ast}(\alpha_1)\cdot p_2^{\ast}(\alpha_2)).
\end{eqnarray*}
\end{proof}

\begin{prop}\label{PropTwoProd}
Let $T$ be a Tambara functor, and let $a_1\in T(A_1)$, $a_2\in T(A_2)$ be elements for some $A_1,A_2\in\Ob(\Gs)$. Then, $\langle a_1\rangle\langle a_2\rangle$ can be calculated as
\[%
(\langle a_1\rangle\langle a_2\rangle)(X)=%
\Set{ u_+(c\,\delta_1\delta_2) |\xy
(-12,0)*+{X}="0";
(-2,0)*+{C}="1";
(9,4)*+{D_1}="2";
(20,4)*+{A_1}="4";
(9,-4)*+{D_2}="12";
(20,-4)*+{A_2}="14";
{\ar_{u} "1";"0"};
{\ar_{v_1} "2";"1"};
{\ar^{w_1} "2";"4"};
{\ar^{v_2} "12";"1"};
{\ar^{w_2} "12";"14"};
\endxy
\quad%
\begin{array}{c}%
\delta_1=v_{1!}w_1^{\ast}(a_1)\\%
\delta_2=v_{2!}w_2^{\ast}(a_2)\\%
c\in T(C)
\end{array}%
} \]
for any $X\in\Ob(\Gs)$.
\end{prop}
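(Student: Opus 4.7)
The plan is to show the two set inclusions separately. The inclusion $\supseteq$ is essentially a direct application of the definitions: given an element $u_+(c\delta_1\delta_2)$ from the right-hand side, Proposition \ref{PropPrincipal} applied to the trivial factorization $C \overset{\id}{\leftarrow} C \overset{v_i}{\leftarrow} D_i \overset{w_i}{\rightarrow} A_i$ with coefficient $1 \in T(C)$ places $\delta_i = v_{i!}w_i^{\ast}(a_i)$ in $\langle a_i\rangle(C)$. Since $\langle a_1\rangle(C) \subseteq T(C)$ is an ideal, $c\delta_1 \in \langle a_1\rangle(C)$, and the definition of the product (Proposition \ref{PropProduct}) applied with $A := C$ and $p := u$ gives $u_+((c\delta_1)\cdot\delta_2) \in (\langle a_1\rangle\langle a_2\rangle)(X)$.

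For $\subseteq$, take an arbitrary element $p_+(ab)$ of $(\langle a_1\rangle\langle a_2\rangle)(X)$ with $a \in \langle a_1\rangle(A)$ and $b \in \langle a_2\rangle(A)$. Proposition \ref{PropPrincipal} provides factorizations
\[ a = u_{1+}(c_1\cdot v_{1!}w_1^{\ast}(a_1)), \qquad b = u_{2+}(c_2\cdot v_{2!}w_2^{\ast}(a_2)) \]
over diagrams $A \overset{u_i}{\leftarrow} C_i \overset{v_i}{\leftarrow} D_i \overset{w_i}{\rightarrow} A_i$ with $c_i \in T(C_i)$. I form the pullback $C$ of $u_1$ and $u_2$ over $A$ with projections $p_1, p_2$, and set $u = u_1 p_1 = u_2 p_2$. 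Lemma \ref{LemPullAdd} then collapses the two pushforwards into one:
\[ ab = u_+\bigl(p_1^{\ast}(c_1\cdot v_{1!}w_1^{\ast}(a_1)) \cdot p_2^{\ast}(c_2\cdot v_{2!}w_2^{\ast}(a_2))\bigr). \]

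Next I pull each $v_i$ back along $p_i$ to obtain $D_i'$ together with $v_i' \colon D_i' \to C$ and $q_i \colon D_i' \to D_i$. Lemma \ref{LemShriek}(3) supplies the base-change identity $p_i^{\ast}v_{i!} = v_{i!}'q_i^{\ast}$, which after distributing $p_i^{\ast}$ as a ring homomorphism yields $p_i^{\ast}(c_i\cdot v_{i!}w_i^{\ast}(a_i)) = p_i^{\ast}(c_i)\cdot v_{i!}'(w_i q_i)^{\ast}(a_i)$. Setting $c = p_1^{\ast}(c_1)\cdot p_2^{\ast}(c_2) \in T(C)$ and regrouping by commutativity, I obtain
\[ p_+(ab) = (pu)_+\bigl(c\cdot v_{1!}'(w_1q_1)^{\ast}(a_1)\cdot v_{2!}'(w_2q_2)^{\ast}(a_2)\bigr), \]
which has exactly the required form, witnessed by $X \overset{pu}{\leftarrow} C \overset{v_i'}{\leftarrow} D_i' \overset{w_iq_i}{\rightarrow} A_i$ for $i=1,2$. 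The main obstacle is the simultaneous bookkeeping: one must merge the two separate pushforwards over $A$ into a single pushforward over a common $C$ (Lemma \ref{LemPullAdd}) while transporting each generator contribution through that same pullback via the base-change formula (Lemma \ref{LemShriek}(3)), so that the result lands precisely in the shape of the generator description from Proposition \ref{PropPrincipal}.
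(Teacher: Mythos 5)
Your argument is correct and follows essentially the same route as the paper's proof: for $\supseteq$, specialize the explicit generator description to trivial data; for $\subseteq$, combine the two transfers into a single one over a common pullback $C = C_1\times_A C_2$ via Lemma \ref{LemPullAdd}, then push each $v_{i!}w_i^{\ast}(a_i)$ through the base change $p_i^{\ast}v_{i!} = v'_{i!}q_i^{\ast}$ from Lemma \ref{LemShriek}(3). The only stylistic difference is that your $\supseteq$ direction invokes the ideal property of $\langle a_i\rangle(C)$ directly rather than substituting into the displayed formula, but the content is the same.
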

\begin{proof}
Denote the right hand side by $\I_{a_1,a_2}(X)$. By Proposition \ref{PropPrincipal} and Proposition \ref{PropProduct}, $(\langle a_1\rangle\langle a_2\rangle)(X)$ is equal to the set
\begin{equation}
\Set{ p_+(u_{1+}(\alpha_1)u_{2+}(\alpha_2)) |%
\begin{array}{c}
B\overset{u_1}{\leftarrow}C_1\overset{v_1}{\leftarrow}D_1\overset{w_1}{\rightarrow}A_1\\
B\overset{u_2}{\leftarrow}C_2\overset{v_2}{\leftarrow}D_2\overset{w_2}{\rightarrow}A_2\\
\end{array}
\begin{array}{c}%
\alpha_1=c_1\cdot(v_{1!}w_1^{\ast}(a_1))\\%
\alpha_2=c_2\cdot(v_{2!}w_2^{\ast}(a_2))\\%
c_1\in T(C_1),\ c_2\in T(C_2)\\%
p\in\Gs(B,X)
\label{EqStarStarStar}
\end{array}%
}.
\end{equation}
Thus $(\langle a_1\rangle\langle a_2\rangle)(X)\supseteq\I_{a_1,a_2}(X)$ is shown, by letting $B=C_1=C_2=C,\ u_1=u_2=\id_C,\ c_1=c,\ c_2=1$ and $p=u$ in $(\ref{EqStarStarStar})$.

Conversely, we show $(\langle a_1\rangle\langle a_2\rangle)(X)\subseteq\I_{a_1,a_2}(X)$. Let $\omega=p_+(u_{1+}(\alpha_1)u_{2+}(\alpha_2))$ be any element in $(\langle a_1\rangle\langle a_2\rangle)(X)$ as in $(\ref{EqStarStarStar})$.
If we take pull-backs
\[
\xy
(-12,12)*+{D_1}="0";
(0,12)*+{D_1\ppr}="2";
(-12,0)*+{C_1}="4";
(0,0)*+{C}="6";
(12,0)*+{D_2\ppr}="8";
(-12,-12)*+{B}="10";
(0,-12)*+{C_2}="12";
(12,-12)*+{D_2}="14";
(15,-13)*+{,}="15";
(-6,6)*+{\square}="21";
(-6,-6)*+{\square}="23";
(6,-6)*+{\square}="25";
{\ar_{q_1} "2";"0"};
{\ar_{p_1} "6";"4"};
{\ar_{u_2\ppr} "8";"6"};
{\ar^{u_2} "12";"10"};
{\ar^{v_2} "14";"12"};
{\ar_{v_1} "0";"4"};
{\ar^{v_1\ppr} "2";"6"};
{\ar_{u_1} "4";"10"};
{\ar^{p_2} "6";"12"};
{\ar^{q_2} "8";"14"};
\endxy
\]
and put $u=u_1\circ p_1=u_2\circ p_2$, then by Lemma \ref{LemPullAdd}, we have
\begin{eqnarray*}
\omega&=&p_+(u_{1+}(\alpha_1)u_{2+}(\alpha_2))\ =\ p_+u_+(p_1^{\ast}(\alpha_1)p_2^{\ast}(\alpha_2))\\
&=&(p\circ u)_+(\, p_1^{\ast}(c_1)\cdot p_2^{\ast}(c_2)\cdot(v\ppr_{1!}(w_1\circ q_1)^{\ast}(a_1))\cdot(v\ppr_{2!}(w_2\circ q_2)^{\ast}(a_2))\,)\\
&\in& \I_{a_1,a_2}(X),
\end{eqnarray*}
and thus $(\langle a_1\rangle\langle a_2\rangle)(X)\subseteq\I_{a_1,a_2}(X)$.
\end{proof}

\begin{cor}\label{CorTwoProd}
Let $\I\subseteq T$ be an ideal, and let $a\in T(A)$, $b\in T(B)$ be elements for some $A,B\in\Ob(\Gs)$.
Then, the following are equivalent.
\begin{enumerate}
\item $\langle a\rangle\langle b\rangle\subseteq\I$.
\item For any $C\in\Ob(\Gs)$ and any pair of diagrams
\begin{equation}
\label{EqStarStarStarStar}
C\overset{v}{\leftarrow}D\overset{w}{\rightarrow}A,\qquad C\overset{v\ppr}{\leftarrow}D\ppr\overset{w\ppr}{\rightarrow}B,
\end{equation}
$\I(C)$ satisfies $(v_!w^{\ast}(a))\cdot(v\ppr_!w^{\prime\ast}(b))\in\I(C)$.
\end{enumerate}
\end{cor}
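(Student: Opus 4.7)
The plan is to deduce this corollary directly from the explicit description of $\langle a\rangle\langle b\rangle$ given in Proposition \ref{PropTwoProd}, using only the axioms (i)--(iii) of an ideal in Definition \ref{DefIdeal}.

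For the implication (1)$\Rightarrow$(2), suppose $\langle a\rangle\langle b\rangle\subseteq\I$ and take any $C$ together with diagrams as in $(\ref{EqStarStarStarStar})$. I would specialize the description of $(\langle a\rangle\langle b\rangle)(C)$ in Proposition \ref{PropTwoProd} by choosing $X=C$, $u=\id_C$, $c=1$, $v_1=v$, $w_1=w$, $v_2=v\ppr$, $w_2=w\ppr$; then the element $(v_!w^{\ast}(a))\cdot(v\ppr_!w^{\prime\ast}(b))$ appears as $u_+(c\,\delta_1\delta_2)$ in the set of Proposition \ref{PropTwoProd}, hence lies in $(\langle a\rangle\langle b\rangle)(C)\subseteq\I(C)$.

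For the converse (2)$\Rightarrow$(1), I would take an arbitrary element of $(\langle a\rangle\langle b\rangle)(X)$ in the form
\[ \omega=u_+(c\cdot\delta_1\cdot\delta_2),\qquad \delta_1=v_{1!}w_1^{\ast}(a),\ \delta_2=v_{2!}w_2^{\ast}(b), \]
coming from diagrams $X\overset{u}{\leftarrow}C\overset{v_1}{\leftarrow}D_1\overset{w_1}{\rightarrow}A$ and $C\overset{v_2}{\leftarrow}D_2\overset{w_2}{\rightarrow}B$, with $c\in T(C)$. By hypothesis (2) applied to the pair $(v_1,w_1),(v_2,w_2)$ (with common target $C$), we get $\delta_1\delta_2\in\I(C)$. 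Since $\I(C)\subseteq T(C)$ is an ideal, multiplying by $c\in T(C)$ keeps us in $\I(C)$, and then condition (ii) of Definition \ref{DefIdeal} (closure under $u_+$) yields $\omega=u_+(c\,\delta_1\delta_2)\in\I(X)$. Hence $(\langle a\rangle\langle b\rangle)(X)\subseteq\I(X)$ for every $X$, as required.

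There is no real obstacle here: once Proposition \ref{PropTwoProd} is in hand, the corollary reduces to choosing the right normal form on one side and invoking the defining closure properties of $\I$ on the other. The only small point to state carefully is that in (2) the two diagrams must share a common middle object $C$, which is precisely what makes the specialization $u=\id_C$, $c=1$ work in the forward direction and allows the general $\omega$ to be split as $u_+$ of an $\I(C)$-multiple in the backward direction.
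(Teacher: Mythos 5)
Your proof is correct and follows essentially the same route as the paper's: both deduce the equivalence from the explicit description of $(\langle a\rangle\langle b\rangle)(X)$ in Proposition \ref{PropTwoProd}, specializing $u=\id_C$, $c=1$ for the forward direction and using closure of $\I$ under multiplication and $u_+$ for the converse. The paper merely compresses the two directions into a single ``if and only if'' sentence, but the content is the same.
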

\begin{proof}
By Proposition \ref{PropTwoProd}, $\langle a\rangle\langle b\rangle\subseteq\I$ if and only if for any
\[ X,C\in\Ob(\Gs),\ \ p\in\Gs(C,X),\ \ c\in T(C), \]
and any pair of diagrams $(\ref{EqStarStarStarStar})$,
\[ p_+(c\cdot(v_!w^{\ast}(a))\cdot(v\ppr_!w^{\prime\ast}(b)))\in\I(X) \]
is satisfied.
Since $\I$ is an ideal, if $(v_!w^{\ast}(a))\cdot(v\ppr_!w^{\prime\ast}(b))\in\I(C)$, then automatically $p_+(c\cdot(v_!w^{\ast}(a))\cdot(v\ppr_!w^{\prime\ast}(b)))\in\I(X)$ follows.
Thus $\langle a\rangle\langle b\rangle\subseteq\I$ if and only if $\I$ satisfies condition {\rm (2)}.
\end{proof}

\smallskip

\subsection{Sum of ideals}

\begin{prop}\label{PropSum}
Let $\{ \I_{\lambda}\}_{\lambda\in\Lambda}$ be a family of ideals in $T$. If we define a subset $\mathscr{K}\subseteq T$ by
\[ \mathscr{K}(X)=\sum_{\lambda\in\Lambda}\I_{\lambda}(X) \]
for each $X\in\Ob(\Gs)$, where the right hand side is the ordinary sum of ideals in $T(X)$, then $\mathscr{K}$ is an ideal of $T$. This is called the {\it sum} of $\{\I_{\lambda}\}_{\lambda\in\Lambda}$, and denoted by $\mathscr{K}=\underset{\lambda\in\Lambda}{\sum}\I_{\lambda}$. Obviously, this is the smallest ideal containing $\I_{\lambda}$ for all $\lambda$.
If $\{\I_{\lambda}\}_{\lambda\in\Lambda}$ is a finite set of ideals $\{ \I_1,\ldots, \I_{\ell}\}$, we also denote it by
\[ \sum_{\lambda\in\Lambda}\I_{\lambda}=\sum_{1\le i\le \ell}\I_i=\I_1+\cdots+\I_{\ell}. \]
\end{prop}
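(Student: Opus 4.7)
The plan is to verify the three defining conditions of Definition \ref{DefIdeal} for $\mathscr{K}$. Note first that each component $\mathscr{K}(X)=\sum_{\lambda}\I_{\lambda}(X)$ is automatically an ideal of the commutative ring $T(X)$, as a sum of ideals. So only the functorial conditions (i), (ii), (iii)$\ppr$ (in the $f_!$-form of Definition \ref{DefShriek}) need to be checked.

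First I would dispatch conditions (i) and (ii), which follow by additivity. Any element of $\mathscr{K}(X)$ has the form $a_1+\cdots+a_n$ with $a_i\in\I_{\lambda_i}(X)$. Since $f^{\ast}$ is a ring homomorphism and $f_+$ is additive, and each $\I_{\lambda_i}$ itself satisfies (i) and (ii), both $f^{\ast}(a_1+\cdots+a_n)$ and $f_+(a_1+\cdots+a_n)$ are finite sums of elements in various $\I_{\lambda_i}$, hence lie in $\mathscr{K}$.

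The main obstacle is condition (iii)$\ppr$, namely $f_!(\mathscr{K}(X))\subseteq\mathscr{K}(Y)$, because $f_!$ is not additive. My plan is to reduce by induction on the number of summands to the two-summand case: show $f_!(a+b)\in\mathscr{K}(Y)$ whenever $a\in\I_{\lambda}(X)$ and $b\in\I_{\mu}(X)$. For this I would apply the addition formula Fact \ref{FactTam}(3) together with the exponential diagram of Remark \ref{RemExp}, giving
\[ f_{\bullet}(a+b)=s_+\bigl(t_{\bullet}r^{\ast}(a)\cdot t\ppr_{\bullet}r^{\prime\ast}(b)\bigr). \]
Condition (iii) for $\I_{\lambda}$ and $\I_{\mu}$ yields $t_{\bullet}r^{\ast}(a)\in t_{\bullet}(0)+\I_{\lambda}(V)$ and $t\ppr_{\bullet}r^{\prime\ast}(b)\in t\ppr_{\bullet}(0)+\I_{\mu}(V)$. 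Expanding the product inside $s_+$ and using that $\I_{\lambda}(V),\I_{\mu}(V)$ are honest ideals of the ring $T(V)$, the product lies in $t_{\bullet}(0)\cdot t\ppr_{\bullet}(0)+\I_{\lambda}(V)+\I_{\mu}(V)$. Applying the additive map $s_+$ and using $s_+(t_{\bullet}(0)\cdot t\ppr_{\bullet}(0))=f_{\bullet}(0)$ (the addition formula evaluated at $a=b=0$), I obtain $f_{\bullet}(a+b)\in f_{\bullet}(0)+\mathscr{K}(Y)$, equivalently $f_!(a+b)\in\mathscr{K}(Y)$. The final claim that $\mathscr{K}$ is the smallest ideal containing every $\I_{\lambda}$ is immediate, since any ideal containing $\bigcup_{\lambda}\I_{\lambda}$ must contain all finite sums of its elements.
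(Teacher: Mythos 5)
Your proof is correct and follows the same route as the paper: reduce to the two-ideal case, apply the addition formula of Fact \ref{FactTam}(3), write $t_{\bullet}r^{\ast}(a)=t_{\bullet}(0)+\alpha$ and $t\ppr_{\bullet}r^{\prime\ast}(b)=t\ppr_{\bullet}(0)+\beta$ via condition (iii), expand the product, and use $s_+(t_{\bullet}(0)\,t\ppr_{\bullet}(0))=f_{\bullet}(0)$. This matches the paper's argument exactly, including the observation that the cross term $\alpha\beta$ absorbs into $\I_{\lambda}(V)+\I_{\mu}(V)$.
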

\begin{proof}
Since the other conditions are trivial, we only show $\K$ satisfies
\[ f_!(\K(X))\subseteq\K(Y) \]
for any $f\in\Gs(X,Y)$.
Namely, for any finite sum $x=\underset{1\le i\le k}{\sum}a_i$ with $a_i\in\I_{\lambda_i}(X)$ and for any $f$, we show $f_!(x)\in\K(Y)$.
Obviously this is reduced to the following case of the finite sum:
\begin{claim}\label{ClaimFinSum}
If $\I_1,\ldots,\I_{\ell}$ are ideals of $T$, then a subset $\K\subseteq T$ defined by
\[ \K(X)=\I_1(X)+\cdots+\I_{\ell}(X)\ \ ({}^{\forall}X\in\Ob(\Gs)) \]
becomes an ideal $\K\subseteq T$. 
\end{claim}

\smallskip

Moreover, by an induction, it suffices to show Claim \ref{ClaimFinSum} for $\ell=2$.

\medskip

Let $\I,\J$ be ideals, and let $a\in\I(X)$, $b\in\J(X)$ be any pair of elements. Then for any $f\in\Gs(X, Y)$, we have
\begin{equation}\label{EqTwoSum}
f_{\bullet}(a+b)=s_+(t_{\bullet}r^{\ast}(a)\cdot t\ppr_{\bullet}r^{\prime\ast}(b))
\end{equation}
in the notation of Remark \ref{RemExp}.
Since $t_{\bullet}(\I(U))\subseteq t_{\bullet}(0)+\I(V)$, there exists an element $\alpha\in\I(V)$ such that $t_{\bullet}r^{\ast}(a)=t_{\bullet}(0)+\alpha$.
Similarly there is $\beta\in\J(V)$ such that $t\ppr_{\bullet}r^{\prime\ast}(b)=t\ppr_{\bullet}(0)+\beta$.
Then we have
\begin{eqnarray*}
(\ref{EqTwoSum})&=&s_+(t_{\bullet}(0)t\ppr_{\bullet}(0)+\alpha t\ppr_{\bullet}(0)+t_{\bullet}(0)\beta+\alpha\beta)\\
&\subseteq&s_+(t_{\bullet}(0)t\ppr_{\bullet}(0))+\I(Y)+\J(Y)+(\I\cap\J)(Y)\\
&=&f_{\bullet}(0)+\I(Y)+\J(Y)+(\I\cap\J)(Y)\\
&\subseteq& f_{\bullet}(0)+\I(Y)+\J(Y).
\end{eqnarray*}
Thus Claim \ref{ClaimFinSum} is shown, and Proposition \ref{PropSum} follows.
\end{proof}

\begin{prop}\label{PropSumProd}
If $\I,\J,\I\ppr,\J\ppr\subseteq T$ are ideals, we have
\[ (\I+\J)(\I\ppr+\J\ppr)=\I\!\!\I\ppr+\I\!\!\J\ppr+\I\ppr\!\!\J+\J\!\!\J\ppr. \]
\end{prop}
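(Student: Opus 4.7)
My plan is to prove the two inclusions separately, leveraging the results already established on products and sums.

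For the inclusion $\I\I'+\I\J'+\J\I'+\J\J'\subseteq(\I+\J)(\I'+\J')$, I would argue monotonically. Each of $\I$ and $\J$ is contained in $\I+\J$ (trivially, from Proposition \ref{PropSum}, since the sum is the smallest ideal containing all summands), and similarly $\I',\J'\subseteq\I'+\J'$. Then Remark \ref{RemProduct}(3) gives $\I\I'\subseteq(\I+\J)(\I'+\J')$ and analogously for the three remaining product ideals. Taking the sum and using that the sum of ideals is the smallest ideal containing all of them (again Proposition \ref{PropSum}) yields the inclusion.

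For the reverse inclusion $(\I+\J)(\I'+\J')\subseteq\I\I'+\I\J'+\J\I'+\J\J'$, I would argue pointwise on each $X\in\Ob(\Gs)$. By Proposition \ref{PropProduct}, an arbitrary element of $((\I+\J)(\I'+\J'))(X)$ has the form $p_+(\alpha\beta)$ for some $p\in\Gs(A,X)$, $\alpha\in(\I+\J)(A)$ and $\beta\in(\I'+\J')(A)$. By the definition of the sum of ideals at level $A$, we may write $\alpha=a+b$ with $a\in\I(A),b\in\J(A)$ and $\beta=a'+b'$ with $a'\in\I'(A),b'\in\J'(A)$. Expanding the product in the ring $T(A)$ and applying additivity of $p_+$ gives
\[
p_+(\alpha\beta)=p_+(aa')+p_+(ab')+p_+(ba')+p_+(bb'),
\]
and each of the four summands lies in the respective product ideal $\I\I'(X),\,\I\J'(X),\,\J\I'(X),\,\J\J'(X)$ directly by the formula in Proposition \ref{PropProduct} (taking $A$ as the intermediate object and $p$ as the structure map, with $a,b,a',b'$ as the pair of ideal elements at $A$). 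Hence $p_+(\alpha\beta)\in(\I\I'+\I\J'+\J\I'+\J\J')(X)$.

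There is essentially no hard step here; the proof is a routine manipulation once one recognizes that the definition of product in Proposition \ref{PropProduct} requires both factors to live over the \emph{same} intermediate $G$-set, which is automatic here since $\alpha$ and $\beta$ come from $(\I+\J)(A)$ and $(\I'+\J')(A)$ for a common $A$. The only mild subtlety is to remember that distributing $p_+$ over the four cross-terms uses the additivity of $p_+$ (part of the Mackey structure) rather than anything multiplicative.
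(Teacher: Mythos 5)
Your proof is correct. The direction $(\I+\J)(\I\ppr+\J\ppr)\subseteq\I\I\ppr+\I\J\ppr+\J\I\ppr+\J\J\ppr$ is handled exactly as in the paper: take a typical element $p_+(\alpha\beta)$ coming from a single intermediate $A$, decompose $\alpha$ and $\beta$ at level $A$, expand in the ring $T(A)$, and push forward by additivity of $p_+$. For the reverse inclusion, however, you take a genuinely different route. The paper argues at the element level: an arbitrary element of the sum is $\sum_{i=1}^4 p_{i+}(\cdots)$ with each summand living over its own $A_i$, so they form $A=A_1\amalg A_2\amalg A_3\amalg A_4$ and assemble four elements $a,a\ppr,b,b\ppr$ of $\I(A),\I\ppr(A),\J(A),\J\ppr(A)$ (padding with zeros on the irrelevant components) so that the sum becomes a single $p_+((a+b)(a\ppr+b\ppr))$. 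You instead sidestep that construction entirely by a monotonicity argument: each of $\I\I\ppr$, $\I\J\ppr$, $\J\I\ppr$, $\J\J\ppr$ is contained in $(\I+\J)(\I\ppr+\J\ppr)$ by Remark \ref{RemProduct}(3) applied to the inclusions $\I,\J\subseteq\I+\J$ and $\I\ppr,\J\ppr\subseteq\I\ppr+\J\ppr$, and then the universal property of the sum in Proposition \ref{PropSum} gives the inclusion of the whole sum. Your version is shorter and avoids the coproduct bookkeeping, at the cost of not exhibiting the explicit realization over a common $G$-set; the paper's version yields that extra concrete information, which is in the spirit of the adjacent explicit descriptions of products (Proposition \ref{PropFeb02}, \ref{PropTwoProd}). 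Both are valid, and either works for the subsequent applications in the paper.
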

\begin{proof}
First we show $(\I+\J)(\I\ppr+\J\ppr)\subseteq\I\!\!\I\ppr+\I\!\!\J\ppr+\I\ppr\!\!\J+\J\!\!\J\ppr$.
For any $X\in\Ob(\Gs)$, any element $\alpha\in(\I+\J)(\I\ppr+\J\ppr)(X)$ is of the form
\[ \alpha=p_+((a+b)(a\ppr+b\ppr)), \]
where $p\in\Gs(A,X),\ a\in\I(A),\ a\ppr\in\I\ppr(A),\ b\in\J(A),\ b\ppr\in\J\ppr(A)$, and thus we have
\begin{eqnarray*}
\alpha&=&p_+(aa\ppr)+p_+(ab\ppr)+p_+(ba\ppr)+p_+(bb\ppr)\\
&\in&(\I\!\!\I\ppr+\I\!\!\J\ppr+\I\ppr\!\!\J+\J\!\!\J\ppr)(X).
\end{eqnarray*}

Conversely, let $\beta$ be any element in $(\I\!\!\I\ppr+\I\!\!\J\ppr+\I\ppr\!\!\J+\J\!\!\J\ppr)(X)$. There exist
\begin{eqnarray*}
&p_1\in\Gs(A_1,X),\ a_1\in\I(A_1),\ a\ppr_1\in\I\ppr(A_1)&\\
&p_2\in\Gs(A_2,X),\ a_2\in\I(A_2),\ b\ppr_2\in\J\ppr(A_2)&\\
&p_3\in\Gs(A_3,X),\ a\ppr_3\in\I\ppr(A_3),\ b_3\in\J(A_3)&\\
&p_4\in\Gs(A_4,X),\ b_4\in\J(A_4),\ b\ppr_4\in\J\ppr(A_4)&
\end{eqnarray*}
such that
\begin{equation}\label{Eq_beta}
\beta=p_{1+}(a_1a\ppr_1)+p_{2+}(a_2b\ppr_2)+p_{3+}(a\ppr_3b_3)+p_{4+}(b_4b\ppr_4).
\end{equation}
If we put $A=A_1\amalg A_2\amalg A_3\amalg A_4$ and put
\begin{eqnarray*}
&a=(a_1,a_2,0,0)\ \ \in\I(A)&\\
&a\ppr=(a\ppr_1,0,a\ppr_3,0)\ \ \in\I\ppr(A)&\\
&b=(0,0,b_3,b_4)\ \ \in\J(A)&\\
&b\ppr=(0,b\ppr_2,0,b\ppr_4)\ \ \in\J\ppr(A)&
\end{eqnarray*}
(here, we used the identification $\I(A)\cong\I(A_1)\times\I(A_2)\times\I(A_3)\times\I(A_4)$, and so on), then we have
\begin{eqnarray*}
(\I+\J)(\I\ppr+\J\ppr)(X)&\ni&p_+((a+b)(a\ppr+b\ppr))\\
&=&p_+(a_1a\ppr_1,a_2b\ppr_2,b_3a\ppr_3,b_4b\ppr_4)\\
&=&p_{1+}(a_1a\ppr_1)+p_{2+}(a_2b\ppr_2)+p_{3+}(b_3a\ppr_3)+p_{4+}(b_4b\ppr_4).
\end{eqnarray*}
By $(\ref{Eq_beta})$, we obtain $\beta\in(\I+\J)(\I\ppr+\J\ppr)(X)$.
\end{proof}

\begin{dfn}\label{DefCoprime}
Let $\I,\J\subseteq T$ be ideals. $\I$ and $\J$ are {\it coprime} if $\I+\J=T$. If $\I_1,\ldots,\I_{\ell}\subseteq T$ are ideals, $\I_1,\ldots,\I_{\ell}$ are {\it pairwise coprime} if $\I_i$ and $\I_j$ are coprime for any $1\le i< j\le\ell$.
\end{dfn}

\begin{rem}\label{RemCoprime}
Let $\I,\J\subseteq T$ be ideals. By Proposition \ref{PropTrivIdeal} and Proposition \ref{PropSum}, the following are equivalent.
\begin{enumerate}
\item $\I$ and $\J$ are coprime.
\item $\I(X)$ and $\J(X)$ are coprime for any $X\in\Ob(\Gs)$.
\item $\I(X)$ and $\J(X)$ are coprime for some $\emptyset\ne X\in\Ob(\Gs)$.
\end{enumerate}
\end{rem}

\begin{cor}\label{CorCoprime}
Let $\I,\J\subseteq T$ be ideals. If $\I$ and $\J$ are coprime, then we have
\[ \I(X)\J(X)=\I\!\!\J(X) \]
for any $X\in\Ob(\Gs)$. Here, the left hand side is the ordinary product of ideals in $T(X)$.
\end{cor}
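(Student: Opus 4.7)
The plan is to sandwich $\I\!\!\J(X)$ between $\I(X)\J(X)$ and $\I(X)\cap\J(X)$, and then collapse the sandwich using the classical commutative-algebra fact that coprime ideals of a ring satisfy $I\cap J=IJ$.

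First, the inclusion $\I(X)\J(X)\subseteq\I\!\!\J(X)$ is already recorded as Remark \ref{RemProduct}(2), so only the reverse inclusion needs work. For this, I would observe that $\I\!\!\J(X)\subseteq\I(X)\cap\J(X)$: indeed, by Remark \ref{RemProduct}(1) we have $\I\!\!\J\subseteq\I\cap\J$ as ideals of $T$, which evaluated at $X$ gives the desired containment. (One also sees this directly: any $x=p_+(ab)$ with $a\in\I(A)$, $b\in\J(A)$ lies in $\I(X)$ because $ab\in\I(A)$ and $p_+$ preserves $\I$, and symmetrically $x\in\J(X)$.)

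Next, I would invoke Remark \ref{RemCoprime}: since $\I$ and $\J$ are coprime as ideals of $T$, the ordinary ideals $\I(X),\J(X)\subseteq T(X)$ are coprime in the commutative ring $T(X)$, i.e.\ $\I(X)+\J(X)=T(X)$. The elementary commutative algebra fact that coprime ideals in a commutative ring satisfy $I\cap J=IJ$ then yields $\I(X)\cap\J(X)=\I(X)\J(X)$. Chaining the three containments gives
\[ \I\!\!\J(X)\ \subseteq\ \I(X)\cap\J(X)\ =\ \I(X)\J(X)\ \subseteq\ \I\!\!\J(X), \]
so all are equal, which is the claim.

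There is essentially no obstacle here; the only thing to double-check is that the coprimality assumption $\I+\J=T$ really does pass to the fibers $\I(X)+\J(X)=T(X)$ for every $X$, which is exactly the content of Remark \ref{RemCoprime}. The proof is therefore a short deduction from results already established in the paper together with standard commutative algebra.
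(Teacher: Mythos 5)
Your proposal is correct and follows essentially the same route as the paper's proof: apply Remark \ref{RemCoprime} to get pointwise coprimality, sandwich $\I\!\!\J(X)$ between $\I(X)\J(X)$ and $\I(X)\cap\J(X)$ via Remark \ref{RemProduct}, and collapse using the classical identity $IJ=I\cap J$ for coprime ideals in a commutative ring.
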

\begin{proof}
Assume $\I$ and $\J$ are coprime. By Remark \ref{RemCoprime}, $\I(X)$ and $\J(X)$ are coprime for any $X\in\Ob(\Gs)$.
Then by the ordinary ideal theory for commutative rings, we have
\[ \I(X)\J(X)=\I(X)\cap\J(X). \]
On the other hand, by Remark \ref{RemProduct}, we have
\[ \I(X)\J(X)\subseteq\I\!\!\J(X)\subseteq\I(X)\cap\J(X). \]
Thus it follows $\I(X)\J(X)=\I\!\!\J(X)$.
\end{proof}

From this, the Chinese remainder theorem for a Tambara functor immediately follows:
\begin{cor}\label{CorChinese}
If $\I_1,\ldots,\I_{\ell}\subseteq T$ are pairwise coprime ideals, then we have the following.
\begin{enumerate}
\item $(\I_1\!\cdots\!\I_{\ell})(X)=\I_1(X)\!\cdots\!\I_{\ell}(X)$ for any $X\in\Ob(\Gs)$.
\item $\I_1\!\cdots\!\I_{\ell}=\I_1\cap\cdots\cap\I_{\ell}$.
\item The projections $T/(\I_1\!\cdots\!\I_{\ell})\rightarrow T/\I_i\ \ (1\le i\le \ell)$ induce an isomorphism of Tambara functors
\[ T/(\I_1\!\cdots\!\I_{\ell})\overset{\cong}{\longrightarrow}T/\I_1\times\cdots\times T/\I_{\ell}. \]
\end{enumerate}
\end{cor}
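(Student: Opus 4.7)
The plan is to reduce all three statements to classical commutative-ring facts evaluated on each $T(X)$, using Corollary \ref{CorCoprime} as the bridge between the Tambara-functor level and the ring-theoretic level. The key translation tool is Remark \ref{RemCoprime}, which says coprimality of ideals of $T$ can be checked after evaluating on any fixed non-empty $G$-set, in particular $X = G/G$.

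For part (1), I would argue by induction on $\ell$, the case $\ell = 2$ being exactly Corollary \ref{CorCoprime}. For the inductive step, the substantive task is to verify that $\I_1\cdots\I_{\ell-1}$ and $\I_\ell$ are coprime. By Remark \ref{RemCoprime} it suffices to show $(\I_1\cdots\I_{\ell-1})(G/G) + \I_\ell(G/G) = T(G/G)$, and by the induction hypothesis this reduces to $\I_1(G/G)\cdots\I_{\ell-1}(G/G) + \I_\ell(G/G) = T(G/G)$. The latter is the classical fact that a product of ideals each coprime to $\I_\ell(G/G)$ is itself coprime to it, proved by expanding $\prod_{i<\ell}(a_i + b_i) = 1$ with $a_i \in \I_i(G/G)$, $b_i \in \I_\ell(G/G)$ satisfying $a_i + b_i = 1$. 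Corollary \ref{CorCoprime} together with the associativity from Proposition \ref{PropFeb02} and the induction hypothesis then yields (1).

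Part (2) follows from (1) pointwise: for each $X$, the ideals $\I_i(X)$ of $T(X)$ are pairwise coprime by Remark \ref{RemCoprime}, so the classical Chinese remainder theorem in the commutative ring $T(X)$ gives $\I_1(X)\cdots\I_\ell(X) = \I_1(X) \cap \cdots \cap \I_\ell(X)$, and the description of the intersection of Tambara-functor ideals from Proposition \ref{PropIntersection} finishes the argument. For part (3), the classical Chinese remainder theorem supplies, at each $X$, a ring isomorphism $T(X)/(\I_1\cdots\I_\ell)(X) \overset{\cong}{\to} T(X)/\I_1(X) \times \cdots \times T(X)/\I_\ell(X)$ induced by the canonical projections, and naturality in $X$ with respect to $f^\ast$, $f_+$ and $f_\bullet$ is automatic because each projection $T \to T/\I_i$ is itself a morphism of Tambara functors by Proposition \ref{PropQuot}. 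The only non-routine step is the coprimality argument in (1); it is handled cleanly by reducing to classical commutative algebra at $G/G$ via Remark \ref{RemCoprime}.
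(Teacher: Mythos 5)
Your proposal is correct and follows the same route as the paper, which simply says that (1) is proved by induction from Corollary \ref{CorCoprime} and that (2) and (3) are verified pointwise on each $X$ using the ordinary Chinese remainder theorem. You have filled in the details the paper leaves implicit: in particular, the coprimality of $\I_1\cdots\I_{\ell-1}$ with $\I_\ell$ needed to run the induction, checked at $G/G$ via Remark \ref{RemCoprime}, and the observation that the pointwise CRT isomorphisms in (3) assemble into a Tambara-functor isomorphism because the component projections are already morphisms of Tambara functors.
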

\begin{proof}
{\rm (1)} follows immediately from Corollary \ref{CorCoprime}, by induction. 
By Corollary \ref{CorCoprime}, {\rm (2)} and {\rm (3)} can be confirmed on each $X\in\Ob(\Gs)$, which is well known in the ordinary ideal theory for commutative rings.
\end{proof}


\section{Prime ideals}
\label{section4}
In the rest of this article, we assume $T$ is non-trivial, namely $T\ne 0$.

\subsection{Definition of a prime ideal}

\begin{dfn}\label{DefPrime}
An ideal $\p\subsetneq T$ is {\it prime} if for any transitive $X,Y\in\Ob(\Gs)$ and any $a\in T(X)$, $b\in T(Y)$,
\[ \langle a\rangle\langle b\rangle\subseteq\p\ \ \Rightarrow\ \ a\in\p(X)\ \ \text{or}\ \ b\in\p(Y) \]
is satisfied. Remark that the converse always holds.

An ideal $\m\subsetneq T$ is {\it maximal} if it is maximal with respect to the inclusion of ideals not equal to $T$.
\end{dfn}

\begin{rem}\label{RemPrime}
If $\I\subsetneq T$ is an ideal not equal to $T$, then by Zorn's lemma, there exists some maximal ideal $\m\subsetneq T$ containing $\I$.

Obviously, when $G$ is trivial, the definition of a prime (resp. maximal) ideal agrees with the ordinary definition of a prime (resp. maximal) ideal.
\end{rem}

\begin{prop}\label{PropPrimeMax}
Any maximal ideal $\m\subseteq T$ is prime.
\end{prop}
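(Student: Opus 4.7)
The plan is to adapt the standard commutative-algebra proof, replacing scalar products by the product of ideals introduced in Proposition \ref{PropProduct} and using the distributivity identity of Proposition \ref{PropSumProd} in place of ordinary ring distributivity.

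First I would argue by contradiction: suppose $\m$ is maximal but not prime. Unwinding Definition \ref{DefPrime}, there exist transitive $X,Y\in\Ob(\Gs)$ and elements $a\in T(X)$, $b\in T(Y)$ with $\langle a\rangle\langle b\rangle\subseteq\m$ yet $a\notin\m(X)$ and $b\notin\m(Y)$. Since $\m+\langle a\rangle$ is an ideal strictly larger than $\m$ (it contains $a$), maximality forces $\m+\langle a\rangle=T$; likewise $\m+\langle b\rangle=T$.

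Next I would expand the product $(\m+\langle a\rangle)(\m+\langle b\rangle)$ using Proposition \ref{PropSumProd}, obtaining
\[ (\m+\langle a\rangle)(\m+\langle b\rangle)=\m\m+\m\langle b\rangle+\langle a\rangle\m+\langle a\rangle\langle b\rangle. \]
By Remark \ref{RemProduct}(3), each of the first three summands is contained in $\m$ (as a product of $\m$ with some ideal is contained in $\m\cdot T\subseteq\m\cap T=\m$ by Remark \ref{RemProduct}(1)), while the last summand lies in $\m$ by our standing hypothesis $\langle a\rangle\langle b\rangle\subseteq\m$. Thus $(\m+\langle a\rangle)(\m+\langle b\rangle)\subseteq\m$.

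Finally, to derive a contradiction, I would pick any transitive, hence non-empty, $X\in\Ob(\Gs)$, so that $1\in T(X)=(\m+\langle a\rangle)(X)$ and $1\in T(X)=(\m+\langle b\rangle)(X)$. Applying Remark \ref{RemProduct}(2) to these two ideals gives
\[ 1=1\cdot 1\in (\m+\langle a\rangle)(X)\cdot(\m+\langle b\rangle)(X)\subseteq\bigl((\m+\langle a\rangle)(\m+\langle b\rangle)\bigr)(X)\subseteq\m(X), \]
and Proposition \ref{PropTrivIdeal} then forces $\m=T$, contradicting $\m\subsetneq T$. I do not anticipate a real obstacle here: the only subtlety is to remember that the product of ideals is defined at the level of Tambara functors rather than ring-by-ring, so one must pass back to ordinary products on each $T(X)$ via Remark \ref{RemProduct}(2) in the last step.
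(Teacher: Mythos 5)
Your proposal is correct and follows essentially the same route as the paper: both arguments rest on Proposition~\ref{PropSumProd} to expand $(\m+\langle a\rangle)(\m+\langle b\rangle)$ and then conclude via $\m\m,\ \m\langle a\rangle,\ \langle b\rangle\m\subseteq\m$. The only cosmetic difference is that you phrase it as a proof by contradiction with an explicit invocation of Remark~\ref{RemProduct}(2) and Proposition~\ref{PropTrivIdeal} at the end, while the paper proves the contrapositive directly, noting that $(\m+\langle a\rangle)(\m+\langle b\rangle)=TT=T\subseteq\m+\langle a\rangle\langle b\rangle$ forces $\langle a\rangle\langle b\rangle\not\subseteq\m$.
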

\begin{proof}
Let $X,Y\in\Ob(\Gs)$ be transitive, and let $a\in T(X)$, $b\in T(Y)$. Suppose $a\in\!\!\!\!\! /\ \m(X)$ and $b\in\!\!\!\!\! /\ \m(Y)$. This means $\m\subsetneq (\m+\langle a\rangle)$ and $\m\subsetneq (\m+\langle b\rangle)$, and we have $\m+\langle a\rangle=T$ and $\m+\langle b\rangle=T$ by the maximality of $\m$. Thus $(\m+\langle a\rangle)(\m+\langle b\rangle)=TT=T$.
Since
\begin{eqnarray*}
(\m+\langle a\rangle)(\m+\langle b\rangle)&=&\m\m+\m\langle a\rangle+\langle b\rangle\m+\langle a\rangle\langle b\rangle\\
&\subseteq&\m+\langle a\rangle\langle b\rangle
\end{eqnarray*}
by Proposition \ref{PropSumProd}, it follows $\m+\langle a\rangle\langle b\rangle=T$, which means $\langle a\rangle\langle b\rangle\subseteq\!\!\!\!\!\! /\ \m$.
\end{proof}

\begin{prop}\label{PropPrimePrime}
Let $\p\subsetneq T$ be an ideal. The following are equivalent.
\begin{enumerate}
\item $\p$ is prime.
\item For any pair of ideals $\I,\J\subseteq T$,
\[ \I\!\!\!\J\subseteq\p\ \ \Rightarrow\ \ \I\subseteq\p\ \ \text{or}\ \ \J\subseteq\p \]
is satisfied. $($The converse always holds.$)$
\end{enumerate}
\end{prop}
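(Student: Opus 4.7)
My plan is to prove the two implications separately; the parenthetical converse of (2)---that $\I\subseteq\p$ or $\J\subseteq\p$ already forces $\I\!\!\!\J\subseteq\p$---will fall out at once from the explicit description of the product in Proposition \ref{PropProduct}, since any element of $\I\!\!\!\J(X)$ has the form $p_+(ab)$ with $a\in\I(A)$, $b\in\J(A)$, and (say) $\I\subseteq\p$ forces $ab\in\p(A)$ by the ordinary ideal property of $\p(A)\subseteq T(A)$, so $p_+(ab)\in\p(X)$ by condition (ii) of Definition \ref{DefIdeal}.

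For (2) $\Rightarrow$ (1) I will simply specialize (2) to principal ideals: given transitive $X,Y$ and elements $a\in T(X)$, $b\in T(Y)$ with $\langle a\rangle\langle b\rangle\subseteq\p$, condition (2) yields $\langle a\rangle\subseteq\p$ or $\langle b\rangle\subseteq\p$, and since $a\in\langle a\rangle(X)$ and $b\in\langle b\rangle(Y)$ this rewrites as $a\in\p(X)$ or $b\in\p(Y)$.

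For the harder direction (1) $\Rightarrow$ (2) I will argue by contraposition. Assuming $\I\not\subseteq\p$ and $\J\not\subseteq\p$, I pick some $X_0,Y_0\in\Ob(\Gs)$ with $\I(X_0)\not\subseteq\p(X_0)$ and $\J(Y_0)\not\subseteq\p(Y_0)$. The key step is the reduction to transitive $G$-sets: decomposing $X_0=\coprod X_0^{(i)}$ into orbits and invoking the product decompositions $\I(X_0)\cong\prod\I(X_0^{(i)})$ and $\p(X_0)\cong\prod\p(X_0^{(i)})$ recorded in Definition \ref{DefIdeal}, I can extract a transitive $X$ and an element $a\in\I(X)\setminus\p(X)$; symmetrically I produce a transitive $Y$ and $b\in\J(Y)\setminus\p(Y)$. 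Then $\langle a\rangle\subseteq\I$ and $\langle b\rangle\subseteq\J$, so Remark \ref{RemProduct}(3) gives $\langle a\rangle\langle b\rangle\subseteq\I\!\!\!\J\subseteq\p$. Primeness of $\p$ then forces $a\in\p(X)$ or $b\in\p(Y)$, contradicting the choice of $a,b$.

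The only genuinely delicate point is this last reduction to transitive $G$-sets, which is exactly what the disjoint-union decomposition of ideals from Definition \ref{DefIdeal} is designed to handle; everything else is formal manipulation on top of Proposition \ref{PropProduct}, Remark \ref{RemProduct}(3), and the minimality property of $\langle-\rangle$.
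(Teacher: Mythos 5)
Your proof is correct and takes essentially the same route as the paper: both reduce the non-containment $\I\not\subseteq\p$ (resp.\ $\J\not\subseteq\p$) to a transitive $G$-set and a single witnessing element, pass to the principal ideals $\langle a\rangle$, $\langle b\rangle$, and apply Remark \ref{RemProduct}(3) together with the primeness of $\p$. The only cosmetic differences are that the paper argues directly (fixing $b\in\J(Y)\setminus\p(Y)$ and showing $\I(X)\subseteq\p(X)$ for every transitive $X$) while you argue by contradiction with both $a$ and $b$ fixed, and that you spell out the orbit decomposition underlying the reduction to transitive objects, which the paper leaves implicit.
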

\begin{proof}
Obviously {\rm (2)} implies {\rm (1)}. We show the converse. Suppose $\J\subseteq\!\!\!\!\!\!\! /\ \p$ and $\I\!\!\!\J\subseteq\p$. By $\J\subseteq\!\!\!\!\!\! /\ \p$, there exist some transitive $Y\in\Ob(\Gs)$ and $b\in\J(Y)$ such that $b\in\!\!\!\!\! /\ \p(Y)$. For any transitive $X\in\Ob(\Gs)$ and any $a\in\I(X)$, we have
\[ \langle a\rangle\langle b\rangle\subseteq\I\!\!\!\J\subseteq\p. \]
Since $\p$ is prime, we obtain $a\in\p(X)$. Thus $\I(X)\subseteq\p(X)$ for any transitive $X\in\Ob(\Gs)$, which means $\I\subseteq\p$.
\end{proof}

By virtue of Corollary \ref{CorTwoProd}, Definition \ref{DefPrime} is rewritten as follows.
\begin{cor}\label{CorPrimeDef}
An ideal $\p\subsetneq T$ is prime if and only if the following two conditions become equivalent, for any transitive $X,Y\in\Ob(\Gs)$ and any $a\in T(X)$, $b\in T(Y):$
\begin{enumerate}
\item For any $C\in\Ob(\Gs)$ and for any pair of diagrams
\[ C\overset{v}{\leftarrow}D\overset{w}{\rightarrow}X,\quad C\overset{v\ppr}{\leftarrow}D\ppr\overset{w\ppr}{\rightarrow}Y, \]
$\p(C)$ satisfies
\[ (v_!w^{\ast}(a))\cdot(v\ppr_!w^{\prime\ast}(b))\in\p(C). \]
\item $a\in\p(X)$ or $b\in\p(Y)$.
\end{enumerate}
Also remark that {\rm (2)} always implies {\rm (1)}.
\end{cor}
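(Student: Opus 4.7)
The plan is to observe that Corollary \ref{CorPrimeDef} is essentially a direct translation of Definition \ref{DefPrime} via Corollary \ref{CorTwoProd}. Specifically, applying Corollary \ref{CorTwoProd} with $\I=\p$, the inclusion $\langle a\rangle\langle b\rangle\subseteq\p$ is equivalent to condition {\rm (1)}. Thus Definition \ref{DefPrime}, which requires that $\langle a\rangle\langle b\rangle\subseteq\p$ implies $a\in\p(X)$ or $b\in\p(Y)$ for any transitive $X,Y$ and $a\in T(X),b\in T(Y)$, becomes exactly the implication ${\rm (1)}\Rightarrow{\rm (2)}$ for all such $X,Y,a,b$.

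To conclude that the stated equivalence in Corollary \ref{CorPrimeDef} holds, I need to verify the parenthetical remark that ${\rm (2)}$ always implies ${\rm (1)}$ (regardless of whether $\p$ is prime). Suppose $a\in\p(X)$ (the case $b\in\p(Y)$ is symmetric). For any diagram $C\overset{v}{\leftarrow}D\overset{w}{\rightarrow}X$, condition {\rm (i)} of Definition \ref{DefIdeal} gives $w^{\ast}(a)\in\p(D)$, and then condition {\rm (iii)$\ppr$} of Definition \ref{DefShriek} gives $v_!w^{\ast}(a)\in\p(C)$. Since $\p(C)$ is an ideal of $T(C)$, multiplying by $v\ppr_!w^{\prime\ast}(b)\in T(C)$ keeps us inside $\p(C)$, which is condition {\rm (1)}.

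Combining these observations: $\p$ is prime if and only if ${\rm (1)}\Rightarrow{\rm (2)}$ holds universally, and since the reverse implication holds unconditionally, this is equivalent to the equivalence ${\rm (1)}\Leftrightarrow{\rm (2)}$ for every transitive $X,Y$ and every $a\in T(X),b\in T(Y)$. There is no real obstacle here; the only thing to be a little careful about is invoking Corollary \ref{CorTwoProd} with the correct identification of the condition, and making sure the trivial direction ${\rm (2)}\Rightarrow{\rm (1)}$ is spelled out so that the equivalence (rather than a one-way implication) characterizes primeness.
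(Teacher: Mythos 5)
Your proposal is correct and follows the paper's own approach: the paper's entire proof consists of the observation "By virtue of Corollary \ref{CorTwoProd}, Definition \ref{DefPrime} is rewritten as follows," which is precisely the translation you carry out; you also usefully spell out the verification that (2) $\Rightarrow$ (1) always holds, which the paper had already flagged in the "Remark that the converse always holds" following Definition \ref{DefPrime}.
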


\smallskip

\subsection{Spectrum of a Tambara functor}

\begin{dfn}\label{DefSpec}
For any Tambara functor $T$ on $G$, define $\Spec(T)$ to be the set of all prime ideals of $T$. For each ideal $\I\subseteq T$, define a subset $V(\I)\subseteq\Spec(T)$ by
\[ V(\I)=\{ \p\in\Spec(T)\mid \I\subseteq\p \}. \]
\end{dfn}

\begin{rem}\label{RemSpec}
For any Tambara functor $T$, we have the following.
\begin{enumerate}
\item $V(\I)=\emptyset$ if and only if $\I=T$.
\item $V(\I)=\Spec(T)$ if and only if $\I\subseteq\underset{\p\in\Spec(T)}{\bigcap}\p$.
\end{enumerate}
\end{rem}
\begin{proof}
\begin{enumerate}
\item $V(T)=\emptyset$ is obvious. The converse follows from Remark \ref{RemPrime} and Proposition \ref{PropPrimeMax}.
\item This is trivial.
\end{enumerate}
\end{proof}

\begin{prop}\label{PropSpec}
For any Tambara functor $T$, the family 
$\{ V(\I)\mid \I\subseteq T\ \text{ideal}\}$ forms a system of closed subsets of $\Spec(T)$.
\end{prop}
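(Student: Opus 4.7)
The plan is to verify the three axioms for a collection to serve as the closed sets of a topology: $\emptyset$ and the whole space lie in the family, the family is stable under arbitrary intersections, and the family is stable under finite unions.

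First I would dispose of the boundary cases. Taking $\I = T$ gives $V(T) = \emptyset$ by Remark \ref{RemSpec}(1), and taking $\I = (0)$ gives $V((0)) = \Spec(T)$ since every ideal contains the zero ideal.

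Next, for arbitrary intersections, given a family $\{\I_\lambda\}_{\lambda \in \Lambda}$ of ideals, I would show
\[ \bigcap_{\lambda \in \Lambda} V(\I_\lambda) = V\Bigl(\sum_{\lambda \in \Lambda} \I_{\lambda}\Bigr). \]
This follows directly from Proposition \ref{PropSum}: a prime $\p$ contains $\sum_\lambda \I_\lambda$ if and only if $\p$ contains each $\I_\lambda$, because $\sum_\lambda \I_\lambda$ is the smallest ideal containing all the $\I_\lambda$.

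The main step is closure under finite unions, which reduces by induction to the binary case. For two ideals $\I, \J \subseteq T$, I claim
\[ V(\I) \cup V(\J) = V(\I\!\!\!\J). \]
The inclusion $V(\I) \cup V(\J) \subseteq V(\I\!\!\!\J)$ is immediate from $\I\!\!\!\J \subseteq \I \cap \J \subseteq \I, \J$ (Remark \ref{RemProduct}(1)). For the nontrivial inclusion, suppose $\p \in V(\I\!\!\!\J)$, that is, $\I\!\!\!\J \subseteq \p$. Here I would invoke Proposition \ref{PropPrimePrime}, which gives exactly that a prime ideal containing a product of ideals must contain one of the factors; hence $\I \subseteq \p$ or $\J \subseteq \p$, i.e., $\p \in V(\I) \cup V(\J)$.

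The only subtle point is the union step, and the work has already been done: Proposition \ref{PropPrimePrime} promoted the generator-wise definition of primality (Definition \ref{DefPrime}) to the ideal-wise statement needed here, so there is no additional obstacle to overcome. The remainder is formal bookkeeping.
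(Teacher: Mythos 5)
Your proof is correct and follows essentially the same route as the paper: both verify the closed-set axioms via $V((0))=\Spec(T)$, $V(T)=\emptyset$ (Remark \ref{RemSpec}), $\bigcap_\lambda V(\I_\lambda)=V(\sum_\lambda\I_\lambda)$ (Proposition \ref{PropSum}), and $V(\I)\cup V(\J)=V(\I\!\!\!\J)$ (Proposition \ref{PropPrimePrime}). You have merely spelled out the two inclusions in the union step more explicitly than the paper, which simply cites the propositions.
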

\begin{proof}
By Remark \ref{RemSpec}, we have $V((0))=\Spec(T)$ and $V(T)=\emptyset$. By Proposition \ref{PropSum}, we have $\underset{\lambda\in\Lambda}{\bigcap}V(\I_{\lambda})=V(\underset{\lambda\in\Lambda}{\sum}\I_{\lambda})$ for any family of ideals $\{\I_{\lambda}\}_{\lambda\in\Lambda}$. By Proposition \ref{PropPrimePrime}, we have $V(\I\!\!\!\J)=V(\I)\cup V(\J)$ for any pair of ideals $\I,\J\subseteq T$.
\end{proof}

Regarding Proposition \ref{Prop1to1}, we have the following.
\begin{thm}\label{ThmMorphSpec}
Let $\varphi\colon T\rightarrow S$ be a morphism in $\TamG$.
\begin{enumerate}
\item $\varphi^{-1}(\q)\subseteq T$ becomes prime if $\q\subsetneq S$ is a prime ideal. This gives a continuous map
\[ \varphi^{a}\colon \Spec(S)\rightarrow\Spec(T)\ ;\ \q\mapsto\varphi^{-1}(\q). \]
\item Assume $\varphi$ is surjective. Then, $\varphi(\p)\subseteq S$ becomes prime if $\p\subsetneq T$ is a prime ideal containing $\Ker\,\varphi$. This gives a continuous map
\[ \varphi_{\sharp}\colon V(\Ker\,\varphi)\rightarrow\Spec(S)\ ;\ \p\mapsto\varphi(\p). \]
\end{enumerate}
\end{thm}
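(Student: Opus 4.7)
For Part (1), Proposition \ref{Prop1to1}(1) already gives that $\varphi^{-1}(\q)$ is an ideal, so it suffices to check it is proper and prime. Properness is immediate from Proposition \ref{PropTrivIdeal}: $\q\subsetneq S$ gives $1\notin\q(G/G)$, and since $\varphi_{G/G}(1)=1$ this forces $1\notin\varphi^{-1}(\q)(G/G)$. For primeness, fix transitive $X,Y$ and $a\in T(X),\ b\in T(Y)$ with $\langle a\rangle\langle b\rangle\subseteq\varphi^{-1}(\q)$. By Corollary \ref{CorTwoProd}, this amounts to saying that for every $C\in\Ob(\Gs)$ and every pair of diagrams $C\overset{v}{\leftarrow}D\overset{w}{\rightarrow}X,\ C\overset{v\ppr}{\leftarrow}D\ppr\overset{w\ppr}{\rightarrow}Y$, the element $(v_!w^{\ast}(a))\cdot(v\ppr_!w^{\prime\ast}(b))$ lies in $\varphi_C^{-1}(\q(C))$. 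Since $\varphi_C$ is a ring homomorphism and, by Lemma \ref{LemShriek}(5) together with the naturality of $\varphi$ with respect to restrictions and additive transfers, commutes with each of $v_!,v\ppr_!,w^{\ast},w^{\prime\ast}$, applying $\varphi_C$ transforms this condition into $(v_!w^{\ast}(\varphi_X(a)))\cdot(v\ppr_!w^{\prime\ast}(\varphi_Y(b)))\in\q(C)$, which by Corollary \ref{CorTwoProd} applied in $S$ is exactly $\langle\varphi_X(a)\rangle\langle\varphi_Y(b)\rangle\subseteq\q$. Primeness of $\q$ then yields $\varphi_X(a)\in\q(X)$ or $\varphi_Y(b)\in\q(Y)$, i.e.\ $a\in\varphi^{-1}(\q)(X)$ or $b\in\varphi^{-1}(\q)(Y)$.

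For continuity of $\varphi^a$, note that $\q\in(\varphi^a)^{-1}(V(\I))$ iff $\varphi_X(\I(X))\subseteq\q(X)$ for every $X$, equivalently iff $\J\subseteq\q$ where $\J\subseteq S$ is the ideal generated by $\bigcup_{X}\varphi_X(\I(X))$; hence $(\varphi^a)^{-1}(V(\I))=V(\J)$ is closed. For Part (2), Proposition \ref{Prop1to1}(2) tells us $\varphi(\p)$ is an ideal. It is proper: if $1\in\varphi(\p)(G/G)=\varphi_{G/G}(\p(G/G))$ then $1=\varphi_{G/G}(x)$ for some $x\in\p(G/G)$, so $1-x\in\Ker\varphi_{G/G}\subseteq\p(G/G)$, forcing $1\in\p(G/G)$ and contradicting $\p\ne T$. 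For primeness, suppose $\langle a\rangle\langle b\rangle\subseteq\varphi(\p)$ with transitive $X,Y$ and $a\in S(X),b\in S(Y)$; using surjectivity of $\varphi$ pick lifts $a\ppr\in T(X),b\ppr\in T(Y)$ of $a,b$. I claim $\langle a\ppr\rangle\langle b\ppr\rangle\subseteq\p$: for any $\xi\in(\langle a\ppr\rangle\langle b\ppr\rangle)(Z)$, the explicit description in Proposition \ref{PropTwoProd} combined with the naturality of $\varphi$ shows $\varphi_Z(\xi)\in(\langle a\rangle\langle b\rangle)(Z)\subseteq\varphi_Z(\p(Z))$; choosing $\eta\in\p(Z)$ with $\varphi_Z(\xi)=\varphi_Z(\eta)$ and using $\xi-\eta\in\Ker\varphi_Z\subseteq\p(Z)$ gives $\xi\in\p(Z)$. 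Primeness of $\p$ then yields $a\ppr\in\p(X)$ or $b\ppr\in\p(Y)$, hence $a\in\varphi(\p)(X)$ or $b\in\varphi(\p)(Y)$.

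Continuity of $\varphi_\sharp$ is then the easiest step: the identities $\varphi(\varphi^{-1}(\J))=\J$ (from surjectivity) and $\varphi^{-1}(\varphi(\p))=\p$ (valid since $\p\supseteq\Ker\varphi$) give the equivalence $\J\subseteq\varphi(\p)\ \Leftrightarrow\ \varphi^{-1}(\J)\subseteq\p$, whence $(\varphi_\sharp)^{-1}(V(\J))=V(\varphi^{-1}(\J))\cap V(\Ker\varphi)$, a closed subset of $V(\Ker\varphi)$ in the subspace topology. The main obstacle is the primeness translation in Part (1): identifying $\langle a\rangle\langle b\rangle\subseteq\varphi^{-1}(\q)$ with $\langle\varphi_X(a)\rangle\langle\varphi_Y(b)\rangle\subseteq\q$. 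Corollary \ref{CorTwoProd} reduces both sides to a single ring-theoretic condition on restrictions and $v_!$-type transfers that is manifestly preserved by naturality of $\varphi$; without that reformulation one would have to chase the less transparent set of generators $u_+(c\,\delta_1\delta_2)$ from Proposition \ref{PropTwoProd} by hand. Once this translation is in place, the remaining verifications (properness, both continuity statements, and the lifting argument for Part (2) primeness) are all routine.
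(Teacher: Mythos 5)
Your proof is correct and takes essentially the same route as the paper. The paper argues the primeness claims in contrapositive form via Corollary~\ref{CorPrimeDef} (starting from $a\notin\p$, $b\notin\p$ and producing a single witnessing diagram), whereas you work in the direct direction via Corollary~\ref{CorTwoProd} (starting from $\langle a\rangle\langle b\rangle\subseteq\varphi^{-1}(\q)$ and transporting the universally-quantified containment across $\varphi$), but these are logically the same reduction, both resting on the same naturality facts: the ring homomorphism property of $\varphi_C$, naturality with respect to $w^{\ast}$ and $u_+$, and Lemma~\ref{LemShriek}(5) for $v_!$. Your properness arguments, the lift-and-subtract step in Part~(2), and both continuity identities agree with the paper's, with the only cosmetic difference being that you write $(\varphi_\sharp)^{-1}(V(\J))$ as $V(\varphi^{-1}(\J))\cap V(\Ker\varphi)$ while the paper leaves the intersection implicit since $\varphi^{-1}(\J)\supseteq\Ker\varphi$ automatically.
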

\begin{proof}
We use the criterion of Corollary \ref{CorPrimeDef}.
\begin{enumerate}
\item Let $\q\subsetneq S$ be a prime ideal. Firstly, since $1\in\varphi^{-1}(\q)(X)$ would imply $1\in\q(X)\ \ ({}^{\forall}X\in\Ob(\Gs))$, we have $\varphi^{-1}(\q)\subsetneq T$.

Let $a\in T(X)$, $b\in T(Y)$ be any pair of elements with $X,Y\in\Ob(\Gs)$ transitive, satisfying
\[ a\in\!\!\!\!\!/\ \varphi^{-1}(\q)(X)=\varphi_X^{-1}(\q(X)),\quad b\in\!\!\!\!\!/\ \varphi^{-1}(\q)(Y)=\varphi_Y^{-1}(\q(Y)). \]
Since $\q$ is prime, they satisfy
\[ (v_!w^{\ast}(\varphi_X(a)))\cdot(v\ppr_!w^{\prime\ast}(\varphi_Y(b)))\in\!\!\!\!\!/\ \q(C) \]
for some
\[ C\overset{v}{\leftarrow}D\overset{w}{\rightarrow}X,\quad C\overset{v\ppr}{\leftarrow}D\ppr\overset{w\ppr}{\rightarrow}Y. \]
Thus we have
\[ \varphi_C((v_!w^{\ast}(a))\cdot(v\ppr_!w^{\prime\ast}(b)))=(v_!w^{\ast}(\varphi_X(a)))\cdot(v\ppr_!w^{\prime\ast}(\varphi_Y(b)))\in\!\!\!\!\!/\ \q(C), \]
which means $(v_!w^{\ast}(a))\cdot(v\ppr_!w^{\prime\ast}(b))\in\!\!\!\!\!/\ \varphi^{-1}(\q)(C)$, and thus $\varphi^{-1}(\q)$ is prime.

Moreover, we have
\begin{eqnarray*}
(\varphi^a)^{-1}(V(\I))&=&\{ \q\in\Spec(S)\mid \I\subseteq\varphi^a(\q) \}\\
&=&\{ \q\in\Spec(S)\mid \langle\varphi(\I)\rangle\subseteq\q \}\\
&=&V(\langle\varphi(\I)\rangle)
\end{eqnarray*}
for each ideal $\I\subseteq T$, which means $\varphi^a$ is a continuous map.
Here, $\varphi(\I)\subseteq S$ is a subset defined by $(\varphi(\I))(X)=\varphi_X(\I(X))$.

\item Let $\p\subsetneq T$ be a prime ideal containing $\Ker\,\varphi$. Firstly, since $\Ker\,\varphi\subseteq\p$ and $1\in\varphi(\p)(X)$ would imply $1\in\p(X)\ \ ({}^{\forall}X\in\Ob(\Gs))$, we have $\varphi(\p)\subsetneq S$.

Let $c\in S(X)$, $d\in S(Y)$ be any pair of elements with $X,Y\in\Ob(\Gs)$ transitive, satisfying
\begin{equation}
\label{EqMorphSpec}
c\in\!\!\!\!\!/\ \varphi(\p)(X)=\varphi_X(\p(X)),\quad d\in\!\!\!\!\!/\ \varphi(\p)(Y)=\varphi_Y(\p(Y)).
\end{equation}
Since $\varphi$ is surjective, there exists some elements $a\in T(X)$ and $b\in T(Y)$ such that $\varphi_X(a)=c$, $\varphi_Y(b)=d$. By $(\ref{EqMorphSpec})$, it follows
\[ a\in\!\!\!\!\!/\ \p(X),\quad b\in\!\!\!\!\!/\ \p(Y). \]

Since $\p$ is prime, they satisfy
\[ (v_!w^{\ast}(a))\cdot(v\ppr_!w^{\prime\ast}(b))\in\!\!\!\!\!/\ \p(C) \]
for some
\[ C\overset{v}{\leftarrow}D\overset{w}{\rightarrow}X,\quad C\overset{v\ppr}{\leftarrow}D\ppr\overset{w\ppr}{\rightarrow}Y. \]
By $\Ker\,\varphi\subseteq\p$, we have
\[ \varphi_C((v_!w^{\ast}(a))\cdot(v\ppr_!w^{\prime\ast}(b)))\in\!\!\!\!\!/\ \varphi_C(\p(C)), \]
and thus
\begin{eqnarray*}
(v_!w^{\ast}(c))\cdot(v\ppr_!w^{\prime\ast}(d))&=&(v_!w^{\ast}\varphi_X(a))\cdot(v\ppr_!w^{\prime\ast}\varphi_Y(b))\\
&=&\varphi_C((v_!w^{\ast}(a))\cdot(v\ppr_!w^{\prime\ast}(b)))\\
&\in\!\!\!\!\!/\ &\varphi_C(\p(C)),
\end{eqnarray*}
which means $\varphi(\p)\subseteq S$ is prime.

Moreover, we have
\begin{eqnarray*}
(\varphi_{\sharp})^{-1}(V(\J))&=&\{ \p\in V(\Ker\,\varphi)\mid \J\subseteq\varphi_{\sharp}(\p) \}\\
&=&\{ \p\in V(\Ker\,\varphi)\mid \varphi^{-1}(\J)\subseteq\p \}\\
&=&V(\varphi^{-1}(\J))
\end{eqnarray*}
for each ideal $\J\subseteq S$, which means $\varphi_{\sharp}$ is a continuous map.
\end{enumerate}
\end{proof}

\begin{cor}\label{CorHomeoSpec}
Let $T$ be a Tambara functor and let $\I\subseteq T$ be an ideal. Then the projection $p\colon T\rightarrow T/\I$ induces a homeomorphism between $\Spec(T/\I)$ and $V(\I)\subseteq\Spec(T)$
\[ p^a\colon \Spec(T/\I)\overset{\approx}{\longrightarrow}V(\I), \]
with the inverse $p_{\sharp}$.
\end{cor}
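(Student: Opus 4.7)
The plan is to deduce the statement essentially as a formal consequence of Theorem \ref{ThmMorphSpec} and the bijection in Proposition \ref{Prop1to1}(3), applied to the projection $p\colon T\rightarrow T/\I$.

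First, since $\Ker\,p = \I$ and $p$ is surjective, both parts of Theorem \ref{ThmMorphSpec} apply: part (1) gives a continuous map $p^a\colon \Spec(T/\I)\rightarrow \Spec(T)$ sending $\q\mapsto p^{-1}(\q)$, and part (2) gives a continuous map $p_{\sharp}\colon V(\I)\rightarrow \Spec(T/\I)$ sending $\p\mapsto p(\p)$. For any $\q\in\Spec(T/\I)$, the prime $p^a(\q)=p^{-1}(\q)$ automatically contains $\Ker\,p=\I$, so $p^a$ factors through $V(\I)\subseteq\Spec(T)$, giving a continuous map $p^a\colon\Spec(T/\I)\rightarrow V(\I)$.

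Second, to see that $p^a$ and $p_{\sharp}$ are mutually inverse, I would invoke Proposition \ref{Prop1to1}(3): the assignment $\q\mapsto p^{-1}(\q)$ is a bijection from ideals of $T/\I$ onto ideals of $T$ containing $\I$, with inverse $\p\mapsto p(\p)$. By the two parts of Theorem \ref{ThmMorphSpec} already invoked, both sides of this bijection preserve primality, so it restricts to a bijection between $\Spec(T/\I)$ and $V(\I)$. This immediately yields $p^a\circ p_{\sharp}=\id_{V(\I)}$ and $p_{\sharp}\circ p^a=\id_{\Spec(T/\I)}$.

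Third, since $p^a$ and $p_{\sharp}$ are continuous mutual inverses, $p^a$ is a homeomorphism onto $V(\I)$, and the result is proved. There is no genuine obstacle here: the substantive work has already been carried out in Theorem \ref{ThmMorphSpec} (continuity and primality preservation in both directions) and Proposition \ref{Prop1to1}(3) (the set-theoretic inverse relationship between $p^{-1}(\cdot)$ and $p(\cdot)$ on ideals containing $\I$). The only point requiring a moment's care is observing that the image of $p^a$ is contained in $V(\I)$, which is immediate from $\Ker\,p=\I$.
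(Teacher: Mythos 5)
Your proof is correct and matches the paper's approach, which simply says the corollary "immediately follows from Theorem \ref{ThmMorphSpec}"; you spell out the routine details (the image of $p^a$ lands in $V(\I)$, the mutual inverse from Proposition \ref{Prop1to1}(3), and continuity in both directions) but the underlying ingredients are identical.
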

\begin{proof}
This immediately follows from Theorem \ref{ThmMorphSpec}.
\end{proof}



\begin{dfn}
A Tambara functor $T$ is {\it reduced} if it satisfies
\[ \underset{\p\in\Spec(T)}{\bigcap}\p=0. \]
\end{dfn}

\begin{rem}\label{RemInt}
If $\varphi\colon T\rightarrow S$ is a surjective map of Tambara functors, then for any family of ideals $\{ \I_{\lambda}\}_{\lambda\in\Lambda}$ of $T$ containing $\Ker\,\varphi$, we have
\[ \varphi(\underset{\lambda\in\Lambda}{\bigcap}\I_{\lambda})=\underset{\lambda\in\Lambda}{\bigcap}\varphi(\I_{\lambda}). \]
\end{rem}
\begin{proof}
By Proposition \ref{Prop1to1}, there is an ideal $\J_{\lambda}\subset S$ satisfying $\I_{\lambda}=\varphi^{-1}(\J_{\lambda})$ for each $\lambda\in\Lambda$.
Since we have
\begin{eqnarray*}
\underset{\lambda\in\Lambda}{\bigcap}\J_{\lambda}%
&=&\varphi(\varphi^{-1}(\underset{\lambda\in\Lambda}{\bigcap}\J_{\lambda}))%
\,=\,\varphi(\underset{\lambda\in\Lambda}{\bigcap}\varphi^{-1}(\J_{\lambda}))\\
&\subseteq&\underset{\lambda\in\Lambda}{\bigcap}\varphi(\varphi^{-1}(\J_{\lambda}))%
\,=\,\underset{\lambda\in\Lambda}{\bigcap}\J_{\lambda},
\end{eqnarray*}
we have
\[ \varphi(\underset{\lambda\in\Lambda}{\bigcap}\varphi^{-1}(\J_{\lambda}))=\underset{\lambda\in\Lambda}{\bigcap}\varphi(\varphi^{-1}(\J_{\lambda})), \]
namely
\[ \varphi(\underset{\lambda\in\Lambda}{\bigcap}\I_{\lambda})=\underset{\lambda\in\Lambda}{\bigcap}\varphi(\I_{\lambda}). \]
\end{proof}

\begin{cor}\label{CorRed}
For any Tambara functor $T$, if we define $T\red$ by
\[ T\red=T/\underset{\p\in\Spec(T)}{\bigcap}\p, \]
then $T\red$ becomes a reduced Tambara functor.
Obviously, $T$ is reduced if and only if $T=T\red$.

Moreover, the projection $T\rightarrow T\red$ induces a natural homeomorphism
\[ p_{\sharp}\colon\Spec(T)\rightarrow \Spec(T\red). \]
\end{cor}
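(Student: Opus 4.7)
The plan is to assemble this corollary entirely from results already in hand: Propositions \ref{PropIntersection} and \ref{PropQuot} for well-definedness, Corollary \ref{CorHomeoSpec} for the homeomorphism, and Remark \ref{RemInt} for reducedness. First I would note that $\underset{\p\in\Spec(T)}{\bigcap}\p$ is an ideal of $T$ by Proposition \ref{PropIntersection}, so by Proposition \ref{PropQuot} the quotient $T\red$ is a well-defined Tambara functor, and the canonical projection $p\colon T\to T\red$ is a surjective morphism with $\Ker\,p=\underset{\p\in\Spec(T)}{\bigcap}\p$. The biconditional ``$T$ is reduced $\iff T=T\red$'' is then immediate from the definition of reducedness.

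For the homeomorphism, I would invoke Corollary \ref{CorHomeoSpec} with $\I=\Ker\,p=\underset{\p\in\Spec(T)}{\bigcap}\p$. Since every $\p\in\Spec(T)$ trivially contains this intersection, we have $V(\Ker\,p)=\Spec(T)$, and Corollary \ref{CorHomeoSpec} supplies a homeomorphism $p^a\colon\Spec(T\red)\xrightarrow{\approx}\Spec(T)$ whose inverse is $p_{\sharp}\colon\Spec(T)\to\Spec(T\red)$. This is exactly the last claim of the corollary.

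It remains to check that $T\red$ is reduced. By the bijection $p_{\sharp}$ just obtained, every prime of $T\red$ has the form $p(\p)$ for a unique $\p\in\Spec(T)$, so
\[
\underset{\q\in\Spec(T\red)}{\bigcap}\q \;=\; \underset{\p\in\Spec(T)}{\bigcap}p(\p).
\]
Because each $\p$ contains $\Ker\,p$, Remark \ref{RemInt} applies to the family $\{\p\}_{\p\in\Spec(T)}$ and the surjection $p$, giving
\[
\underset{\p\in\Spec(T)}{\bigcap}p(\p) \;=\; p\Bigl(\underset{\p\in\Spec(T)}{\bigcap}\p\Bigr) \;=\; p(\Ker\,p) \;=\; 0,
\]
so $T\red$ is indeed reduced.

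There is no real obstacle: the proof is pure bookkeeping. The only step requiring a moment's care is recognising that $\Ker\,p=\underset{\p}{\bigcap}\p$ is precisely the hypothesis that makes both Corollary \ref{CorHomeoSpec} (through $V(\Ker\,p)=\Spec(T)$) and Remark \ref{RemInt} (through $\Ker\,p\subseteq\p$ for every $\p$) apply simultaneously; once this is observed, every other ingredient is already on the shelf.
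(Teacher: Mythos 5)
Your proof is correct and takes essentially the same route as the paper: both argue reducedness of $T\red$ by combining the bijection from Corollary \ref{CorHomeoSpec} with the intersection-preservation of Remark \ref{RemInt}, and both obtain the homeomorphism claim from Corollary \ref{CorHomeoSpec} together with the observation that $V\bigl(\underset{\p}{\bigcap}\p\bigr)=\Spec(T)$. You merely spell out the well-definedness preliminaries (Propositions \ref{PropIntersection} and \ref{PropQuot}) that the paper leaves implicit.
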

\begin{proof}
By Corollary \ref{CorHomeoSpec},
\[ p_{\sharp}\colon V(\underset{\p\in\Spec(T)}{\bigcap}\p)\rightarrow \Spec(T\red) \ ;\ \p\mapsto p(\p) \]
is a bijection, and we have
\[ \underset{\q\in\Spec(T\red)}{\bigcap}\q=\underset{\p\in\Spec(T)}{\bigcap}p(\p)=p(\underset{\p\in\Spec(T)}{\bigcap}\p)=(0) \]
by Remark \ref{RemInt}.

The latter part also follows from Corollary \ref{CorHomeoSpec},
since we have
\[ V(\underset{\p\in\Spec(T)}{\bigcap}\p)=\Spec(T), \]
by Remark \ref{RemSpec}.
\end{proof}


\begin{lem}\label{LemSpecTopo}
Let $T,T_1,T_2$ be non-trivial Tambara functors on $G$, satisfying $T=T_1\times T_2$ as Tambara functors. Then any ideal of $T$ is of the form $\I_1\times\I_2$, where $\I_1\subseteq T_1$ and $\I_2\subseteq T_2$ are ideals.

Moreover, for any ideals $\I_1\subseteq T_1$ and $\I_2\subseteq T_2$, the ideal $\I_1\times \I_2\subseteq T$ is prime if and only if
\[ \I_1=T_1\ \text{and}\ \I_2\subseteq T_2\ \text{is prime} \]
or
\[ \I_1\subseteq T_1\ \text{is prime}\ \text{and}\ \I_2=T_2 \]
holds.
\end{lem}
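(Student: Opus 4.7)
For the decomposition claim, I exploit that $T = T_1 \times T_2$ in $\TamG$ means $T(X) = T_1(X) \times T_2(X)$ as rings and, crucially, that every structure morphism $f^{\ast}, f_+, f_{\bullet}$ of $T$ splits componentwise into the corresponding morphisms of $T_1$ and $T_2$. Given an ideal $\I \subseteq T$, define $\I_i(X) := \pi_{i,X}(\I(X))$ using the canonical projections $\pi_i \colon T \to T_i$. A standard product-ring argument gives $\I(X) = \I_1(X) \times \I_2(X)$: the nontrivial containment uses that if $a_1 = \pi_{1,X}(x)$ and $b_2 = \pi_{2,X}(y)$ for some $x,y \in \I(X)$, then $(a_1, b_2) = (1,0)\cdot x + (0,1)\cdot y \in \I(X)$. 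The componentwise splitting of the structure morphisms then makes it routine to check the conditions of Definition \ref{DefIdeal} for each $\I_i$.

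\textbf{Primality characterization.} The easy direction is that $\I_1 \times \I_2$ is prime when, say, $\I_1 = T_1$ and $\I_2 \subsetneq T_2$ is prime: given transitive $X, Y$ and $a, b$ with $\langle a \rangle \langle b \rangle \subseteq T_1 \times \I_2$, apply the Tambara morphism $\pi_2$ to obtain $\langle a_2 \rangle \langle b_2 \rangle \subseteq \I_2$ in $T_2$, and conclude from primality of $\I_2$. (Alternatively, this is Theorem \ref{ThmMorphSpec} applied to $\pi_2$.) For the converse, suppose $\I_1 \times \I_2$ is prime. I first rule out both $\I_i$ being proper by testing $a = (1,0)$ and $b = (0,1)$ in $T(G/G)$: by Proposition \ref{PropTrivIdeal} neither lies in $(\I_1 \times \I_2)(G/G)$, yet a direct computation using Proposition \ref{PropTwoProd} and the componentwise splitting shows $v_! w^{\ast}(a) \in T_1(D) \times \{0\}$ and $v'_! w'^{\ast}(b) \in \{0\} \times T_2(D')$ for any zigzags, so their product vanishes, whence $\langle a \rangle \langle b \rangle = (0) \subseteq \I_1 \times \I_2$, contradicting primality. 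Thus, without loss of generality, $\I_1 = T_1$, and properness of $\I_1 \times \I_2$ forces $\I_2 \subsetneq T_2$. To see $\I_2$ is prime, I lift any $a_2 \in T_2(X), b_2 \in T_2(Y)$ with $\langle a_2\rangle\langle b_2\rangle \subseteq \I_2$ to $a = (0, a_2), b = (0, b_2)$ in $T$; the same splitting argument shows $\langle a\rangle\langle b\rangle \subseteq \{0\} \times \I_2 \subseteq T_1 \times \I_2$, so primality of the latter yields $a_2 \in \I_2(X)$ or $b_2 \in \I_2(Y)$.

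\textbf{Main obstacle.} The crux is the orthogonality computation $\langle (1,0) \rangle \langle (0,1) \rangle = (0)$. It is not enough to note that the ring-level product $(1,0)(0,1)$ vanishes; one must verify via Proposition \ref{PropTwoProd} that after arbitrary restrictions $w^{\ast}$, applications of $v_!$ (using $v_!(0)=0$, which follows from Definition \ref{DefShriek}), multiplications by $c \in T(C)$, and additive transfers $u_+$, the two factors $v_! w^{\ast}(1,0)$ and $v'_! w'^{\ast}(0,1)$ remain in complementary coordinates, so their product stays zero. This is where the hypothesis $T = T_1 \times T_2$ \emph{as Tambara functors}, rather than merely as Mackey or Green functors, is essential.
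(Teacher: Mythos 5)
Your proof is correct, and the overall strategy overlaps substantially with the paper's, but you take a more laborious route through the key step. For the statement that primality of $\I_1\times\I_2$ forces one factor to be all of $T_i$, you work at the element level: you test $a=(1,0)$, $b=(0,1)$ in $T(G/G)$, and verify via Proposition \ref{PropTwoProd} that every zigzag $v_!w^{\ast}$ keeps each element in its own coordinate, so $\langle a\rangle\langle b\rangle=(0)$. The paper instead works at the \emph{ideal} level using Proposition \ref{PropPrimePrime}: once the decomposition part is known, $T_1\times(0)$ and $(0)\times T_2$ are ideals, and their product $(T_1\times(0))((0)\times T_2)$ is seen to be $(0)$ directly from Proposition \ref{PropProduct}, since any generator $p_+(ab)$ has $ab=0$ already at the ring level in $T(A)=T_1(A)\times T_2(A)$ — no zigzag bookkeeping needed. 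This sidesteps entirely what you identify as the ``main obstacle,'' which is an artifact of choosing the element-level criterion of Definition \ref{DefPrime} rather than the ideal-level criterion of Proposition \ref{PropPrimePrime}. For the remaining equivalence (with $\I_1=T_1$, that $T_1\times\I_2$ is prime iff $\I_2$ is), the paper invokes the quotient isomorphism $T/(T_1\times\I_2)\cong T_2/\I_2$ together with Corollary \ref{CorDF}, while you use the projection $\pi_2$ and Theorem \ref{ThmMorphSpec} in one direction and an explicit lift in the other; these are equivalent in content, and in fact Theorem \ref{ThmMorphSpec}(2) applied to $\pi_2$ (whose kernel is $T_1\times(0)\subseteq T_1\times\I_2$) would give the converse direction with no lifting argument needed. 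So: the proposal is sound, but Proposition \ref{PropPrimePrime} is the tool that makes the central computation a one-liner.
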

\begin{proof}
The forepart immediately follows from the definition of an ideal.

To show the latter part, we use the criterion of Proposition \ref{PropPrimePrime}.
Suppose $\I_1\times\I_2$ is prime. Since
\[ (T_1\times(0))((0)\times T_2)=(0)\times(0)\subseteq \I_1\times\I_2, \]
it follows
\[ T_1\times (0)\subseteq \I_1\times\I_2\ \,\text{or}\ \, (0)\times T_2\subseteq \I_1\times\I_2, \]
namely, $\I_1=T_1$ or $\I_2=T_2$.

Suppose $\I_1=T_1$. Then, since
\[ (T_1\times T_2)/(T_1\times\I_2)\cong T_2/\I_2, \]
$T_1\times\I_2\subseteq T$ is prime if and only if $\I_2\subseteq T_2$ is prime.
\end{proof}

\begin{prop}\label{PropSpecTopo}
For any Tambara functor $T$, the following are equivalent.
\begin{enumerate}
\item $\Spec(T)$ is disconnected.
\item There exist coprime ideals $\I,\J\subseteq T\red$ such that $\I\cap\J=(0)$.
\item For any $X\in\Ob(\Gs)$, there exist $a,b\in T\red(X)$ satisfying $a+b=1$ and $\langle a\rangle\langle b\rangle=(0)$.
\item For some non-empty $X\in\Ob(\Gs)$, there exist $a,b\in T\red(X)$ satisfying $a+b=1$ and $\langle a\rangle\langle b\rangle=(0)$.
\item There exists a pair of non-trivial Tambara functors $T_1$ and $T_2$, and an isomorphism of Tambara functors $T\red\cong T_1\times T_2$.
\end{enumerate}
\end{prop}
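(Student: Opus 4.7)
The plan is to close the circle via $(1) \Leftrightarrow (2) \Leftrightarrow (5)$ and $(5) \Rightarrow (3) \Rightarrow (4) \Rightarrow (2)$, since $(3) \Rightarrow (4)$ is immediate. For $(1) \Leftrightarrow (2)$, Corollary \ref{CorRed} identifies $\Spec(T)$ with $\Spec(T\red)$, so the question of disconnection may be posed inside $T\red$. A disconnection corresponds to a decomposition $\Spec(T\red) = V(\I) \sqcup V(\J)$ with both pieces non-empty, forcing $\I, \J$ to be proper by Remark \ref{RemSpec}(1). Using $V(\I+\J) = V(\I) \cap V(\J)$ and $V(\I\J) = V(\I) \cup V(\J)$ (from Proposition \ref{PropSpec}) together with Remark \ref{RemSpec}, disjointness is equivalent to coprimality $\I + \J = T\red$, and covering is equivalent to $\I\J \subseteq \bigcap_{\p}\p = (0)$. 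For coprime ideals, Corollary \ref{CorCoprime} (applied on each $X$) and Corollary \ref{CorChinese}(2) identify $\I\J$ with $\I \cap \J$, producing condition $(2)$; reversing the chain recovers $(1)$.

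The equivalence $(2) \Leftrightarrow (5)$ is essentially the Chinese remainder theorem, Corollary \ref{CorChinese}(3): from proper coprime ideals $\I,\J \subseteq T\red$ with $\I \cap \J = (0)$ one obtains $T\red \cong T\red/\I \times T\red/\J$ with both factors non-trivial, and conversely a decomposition $T\red \cong T_1 \times T_2$ into non-trivial pieces supplies, via Lemma \ref{LemSpecTopo}, the two proper coprime ideals $T_1 \times (0)$ and $(0) \times T_2$ whose intersection vanishes.

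For $(5) \Rightarrow (3)$ I would take $a = (1, 0)$ and $b = (0, 1)$ in $T\red(X) \cong T_1(X) \times T_2(X)$, so $a + b = 1$ is clear. Lemma \ref{LemSpecTopo} combined with Proposition \ref{PropTrivIdeal} shows $\langle a \rangle = T_1 \times (0)$ and $\langle b \rangle = (0) \times T_2$, and any element $p_+(\alpha\beta) \in (\langle a\rangle \langle b\rangle)(X)$ from Proposition \ref{PropProduct} has $\alpha = (\alpha_1, 0)$ and $\beta = (0, \beta_2)$, hence $\alpha\beta = 0$ and the product ideal vanishes. The step $(3) \Rightarrow (4)$ is immediate by taking $X = G/G$. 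Finally for $(4) \Rightarrow (2)$, set $\I = \langle a\rangle$, $\J = \langle b\rangle$; the relation $a + b = 1$ on the non-empty $X$ forces $\I + \J = T\red$ by Proposition \ref{PropTrivIdeal}, so Corollary \ref{CorCoprime} upgrades the hypothesis $\langle a\rangle \langle b\rangle = (0)$ to $\I \cap \J = (0)$.

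The main obstacle I anticipate is the careful computation of $\langle a \rangle \langle b \rangle$ inside $T_1 \times T_2$ in step $(5) \Rightarrow (3)$: one must first identify the principal ideals via Lemma \ref{LemSpecTopo} and then unpack the formula for the product of ideals from Proposition \ref{PropProduct} componentwise. A secondary subtlety, running through several implications, is that conditions $(2)$, $(3)$, $(4)$ must be read with the implicit non-triviality requirements on the ideals or idempotents involved; these are inherited from non-emptiness of the closed sets $V(\I), V(\J)$ in $(1)$ and from non-triviality of the factors in $(5)$.
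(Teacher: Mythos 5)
Your proof is correct and follows essentially the same route as the paper: reduce to the reduced case via Corollary \ref{CorRed}, translate disconnection of $\Spec(T\red)$ into coprime ideals with zero intersection using Remark \ref{RemSpec} and Proposition \ref{PropSpec}, apply the Chinese remainder theorem to pass between $(2)$ and $(5)$, and complete the cycle through the element conditions $(3)$ and $(4)$. The one genuine deviation is that you derive $(3)$ from $(5)$ via the explicit idempotents $a=(1,0)$, $b=(0,1)$ and the computation $\langle a\rangle = T_1\times(0)$, $\langle b\rangle = (0)\times T_2$ (using Lemma \ref{LemSpecTopo} and Proposition \ref{PropTrivIdeal}), whereas the paper instead shows $(2)\Rightarrow(3)$ directly: picking $a\in\I(X)$, $b\in\J(X)$ with $a+b=1$ via Remark \ref{RemCoprime} and bounding $\langle a\rangle\langle b\rangle = \langle a\rangle\cap\langle b\rangle \subseteq \I\cap\J = (0)$. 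Both are short and correct; yours is more concrete, the paper's is slightly more economical. Your observation that conditions $(2)$--$(4)$ carry implicit non-triviality/properness requirements (otherwise $\I=T\red$, $\J=(0)$ would satisfy $(2)$ vacuously) is a real point that the paper's terse statement of $(1)\Leftrightarrow(2)$ leaves implicit, and it is good that you flagged it.
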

\begin{proof}
By Corollary \ref{CorRed}, we may assume $T$ is reduced, $T\red=T$.

Suppose $\Spec(T)$ is disconnected. By definition of the topology on $\Spec(T)$, this is equivalent to the existence of ideals $\I,\J\subseteq T$ satisfying
\[ V(\I)\cup V(\J)=\Spec(T)\quad\text{and}\quad V(\I)\cap V(\J)=\emptyset. \]
By Remark \ref{RemSpec}, this is equivalent to
\[ \I\cap\J=(0)\quad\text{and}\quad\I+\J=T. \]
Thus {\rm (1)} is equivalent to {\rm (2)}.

If $T$ satisfies {\rm (2)}, then we have
\[ T=T/(\I\cap\J)\cong T/\I\times T/\J \]
by Corollary \ref{CorChinese}, and thus {\rm (5)} follows.
Besides, {\rm (5)} implies {\rm (1)} by Lemma \ref{LemSpecTopo}, since we have $\Spec(T)=V((0)\times T_2)\amalg V(T_1\times (0))\,(\approx \Spec(T_1)\amalg\Spec(T_2))$.
Obviously {\rm (3)} implies {\rm (4)}, and {\rm (4)} implies {\rm (2)}.
Thus it remains to show that {\rm (2)} implies {\rm (3)}.

If there exist ideals $\I,\J\subseteq T$ satisfying {\rm (2)}, then for any $X\in\Ob(\Gs)$, there exist $a\in\I(X)$ and $b\in\J(X)$ such that $a+b=1$ by Remark \ref{RemCoprime}
In particular $\langle a\rangle$ and $\langle b\rangle$ are coprime, and we have
\[ \langle a\rangle\langle b\rangle=\langle a\rangle\cap\langle b\rangle\subseteq\I\cap\J=(0), \]
and thus {\rm (3)} follows.
\end{proof}

We can take the radical of an ideal as follows.
\begin{prop}\label{PropRoot1}
Let $\I\subseteq T$ be an ideal. If we define a subset $\sqrt{\I}\subseteq T$ by
\[ \sqrt{\I}(X)=\{ a\in T(X)\mid \langle a\rangle^n\subseteq \I\ \ ({}^{\exists}n\in\mathbb{N}) \} \]
for each $X\in\Ob(\Gs)$, then $\sqrt{\I}$ becomes an ideal of $T$. We call this the {\it radical} of $\I$.
\end{prop}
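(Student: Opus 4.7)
The plan is to verify the two requirements separately: first, that $\sqrt{\I}(X)$ is an ordinary ideal of the ring $T(X)$ for each $X \in \Ob(\Gs)$; second, that the family $\{\sqrt{\I}(X)\}_X$ is closed under $f^{\ast}$, $f_+$, and $f_!$ for every $f \in \Gs(X,Y)$.

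The key observation driving everything is that for any structure morphism $\phi \in \{f^{\ast}, f_+, f_!\}$ and any $a \in T(X)$, the image $\phi(a)$ lies in $\langle a \rangle$, because $\langle a \rangle$ is itself an ideal of $T$ and hence stable under all three types of structure morphisms by Definition \ref{DefIdeal} and Definition \ref{DefShriek}. Consequently $\langle \phi(a) \rangle \subseteq \langle a \rangle$, which gives $\langle \phi(a)\rangle^n \subseteq \langle a\rangle^n \subseteq \I$ whenever $a \in \sqrt{\I}(X)$. This settles stability under $f^{\ast}$, $f_+$ and $f_!$ in one stroke.

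For the pointwise ideal property: closure under multiplication by $r \in T(X)$ is immediate, since $ra \in \langle a\rangle(X)$ (because $\langle a\rangle(X) \subseteq T(X)$ is an ideal), hence $\langle ra\rangle \subseteq \langle a\rangle$ and $\langle ra\rangle^n \subseteq \I$. For closure under addition, suppose $a,b \in \sqrt{\I}(X)$ with $\langle a\rangle^n\subseteq\I$ and $\langle b\rangle^m\subseteq\I$. Since $a+b \in (\langle a\rangle + \langle b\rangle)(X)$, we get $\langle a+b\rangle \subseteq \langle a\rangle + \langle b\rangle$. The remaining task is then to show $(\langle a\rangle+\langle b\rangle)^{n+m} \subseteq \I$.

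The main (mild) obstacle is establishing the binomial-type expansion in the Tambara-functor setting. This is done by iterating Proposition \ref{PropSumProd} together with the associativity and order-independence of products from Proposition \ref{PropFeb02}, yielding by induction
\[ (\langle a\rangle + \langle b\rangle)^{n+m} \subseteq \sum_{i+j=n+m}\langle a\rangle^i\langle b\rangle^j. \]
For any pair $(i,j)$ with $i+j=n+m$ either $i\geq n$ or $j\geq m$, and in either case $\langle a\rangle^i\langle b\rangle^j$ is contained in $\langle a\rangle^n$ or $\langle b\rangle^m$ (using $\I\cdot T \subseteq \I$ for any ideal $\I$). Hence the whole sum lies in $\langle a\rangle^n + \langle b\rangle^m \subseteq \I$, so $\langle a+b\rangle^{n+m}\subseteq\I$ and $a+b \in \sqrt{\I}(X)$. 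Combining the pointwise ideal property with stability under structure morphisms completes the proof.
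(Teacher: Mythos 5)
Your proof is correct and follows essentially the same approach as the paper's: it reduces everything to the observations that $f_+(a), f_!(a), g^{\ast}(a), ra \in \langle a\rangle$ force $\langle f_+(a)\rangle^n, \dots \subseteq \langle a\rangle^n \subseteq \I$, and that $\langle a+b\rangle^{m+n} \subseteq (\langle a\rangle+\langle b\rangle)^{m+n} = \sum_i \langle a\rangle^{m+n-i}\langle b\rangle^i \subseteq \I$. The only difference is that you spell out the binomial-type expansion and the term-by-term containment, which the paper states without elaboration.
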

\begin{proof}
First we show $\sqrt{\I}(X)\subseteq T(X)$ is an ideal for each $X\in\Ob(\Gs)$.

Take any $a,b\in\sqrt{\I}(X)$, and suppose
\[ \langle a\rangle^m\subseteq\I,\ \ \ \langle b\rangle^n\subseteq\I \]
holds for $m,n\in\mathbb{N}$.
Since $a+b\in(\langle a\rangle+\langle b\rangle)(X)$, we have
\[ \langle a+b\rangle^{m+n}\subseteq(\langle a\rangle+\langle b\rangle)^{m+n}=\sum_{i=0}^{m+n}\langle a\rangle^{m+n-i}\langle b\rangle^i\ \ \subseteq\I, \]
and thus $a+b\in\sqrt{\I}(X)$.
Besides, for any $r\in T(X)$, since $ra\in\langle a\rangle$, we have
\[ \langle ra\rangle^m\subseteq \langle a\rangle^m\subseteq\I, \]
and thus $ra\in\sqrt{\I}(X)$.

It remains to show $\sqrt{\I}$ is closed under $(\,)_+,(\,)_!,(\,)^{\ast}$. Take any $f\in\Gs(X,Y)$ and any $g\in\Gs(W,X)$.
Since we have
\begin{eqnarray*}
f_+(a)&\in&\langle a\rangle(Y),\\
f_!(a)&\in&\langle a\rangle(Y),\\
g^{\ast}(a)&\in&\langle a\rangle(W)
\end{eqnarray*}
by the definition of an ideal,
it follows
\begin{eqnarray*}
\langle f_+(a)\rangle^m&\subseteq&\langle a\rangle^m\ \subseteq\,\I,\\
\langle f_!(a)\rangle^m&\subseteq&\langle a\rangle^m\ \subseteq\,\I,\\
\langle g^{\ast}(a)\rangle^m&\subseteq&\langle a\rangle^m\ \subseteq\,\I,
\end{eqnarray*}
which means $f_+(a),f_!(a)\in\sqrt{\I}(Y)$ and $g^{\ast}(a)\in\sqrt{\I}(W)$.
\end{proof}

\begin{prop}\label{PropRoot2}
Let $T$ be a Tambara functor.
For any ideal $\I\subseteq T$, we have
\[ \sqrt{\I}\subseteq\underset{\p\in V(\I)}{\bigcap}\p. \]
In particular, $\sqrt{(0)}=(0)$ follows if $T$ is reduced.
\end{prop}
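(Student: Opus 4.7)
The plan is to prove the inclusion element-wise. Fix $X\in\Ob(\Gs)$ and take $a\in\sqrt{\I}(X)$; by definition of $\sqrt{\I}$ there exists $n\in\mathbb{N}$ with $\langle a\rangle^n\subseteq\I$. Let $\p\in V(\I)$ be arbitrary, so $\langle a\rangle^n\subseteq\p$. I then invoke Proposition~\ref{PropPrimePrime}: since $\p$ is prime, the inclusion $\langle a\rangle\cdot\langle a\rangle^{n-1}\subseteq\p$ forces either $\langle a\rangle\subseteq\p$ or $\langle a\rangle^{n-1}\subseteq\p$. A straightforward induction on $n$ (the case $n=1$ being $\langle a\rangle\subseteq\I\subseteq\p$, hence immediate) yields $\langle a\rangle\subseteq\p$ in every case, so in particular $a\in\langle a\rangle(X)\subseteq\p(X)$. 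Taking the intersection over all $\p\in V(\I)$ gives the claimed inclusion $\sqrt{\I}\subseteq\bigcap_{\p\in V(\I)}\p$.

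For the ``in particular'' statement, specialize to $\I=(0)$. Every prime ideal trivially contains the zero ideal, so $V((0))=\Spec(T)$, and the first part gives
\[ \sqrt{(0)}\subseteq\underset{\p\in\Spec(T)}{\bigcap}\p, \]
which equals $(0)$ by the definition of a reduced Tambara functor. The reverse inclusion $(0)\subseteq\sqrt{(0)}$ is automatic from $\langle 0\rangle=(0)\subseteq(0)$, and equality follows.

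I do not anticipate any substantive obstacle: the argument is the classical commutative-algebra proof ``$\sqrt{\I}$ equals the intersection of primes containing it'' transplanted to the Tambara-functor setting, where the necessary replacement of element-level reasoning by ideal-level reasoning is supplied by Proposition~\ref{PropPrimePrime}. The only mild subtlety is that the definition of prime (Definition~\ref{DefPrime}) is phrased on principal ideals coming from transitive $G$-sets, so one must go through Proposition~\ref{PropPrimePrime} rather than attempt to argue directly with elements of $\langle a\rangle^n$.
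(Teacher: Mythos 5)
Your proof is correct and follows essentially the same route as the paper's: take $a\in\sqrt{\I}(X)$ with $\langle a\rangle^n\subseteq\I\subseteq\p$ and use Proposition~\ref{PropPrimePrime} to deduce $\langle a\rangle\subseteq\p$. The only cosmetic differences are that you spell out the induction on $n$ and handle the ``in particular'' clause explicitly, both of which the paper leaves implicit.
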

\begin{proof}
Let $X\in\Ob(\Gs)$ be any transitive $G$-set.
For any $a\in\sqrt{\I}(X)$, there exists $n\in\mathbb{N}$ such that $\langle a\rangle^n\subseteq \I$.
Thus for any $\p\in V(\I)$, we have
\[ \langle a\rangle^n\subseteq\I\subseteq\p.  \]
Since $\p$ is prime, this means $\langle a\rangle\subseteq\p$, namely $a\in\p(X)$. Thus we obtain $\sqrt{\I}(X)\subseteq\p(X)$.
\end{proof}

\begin{cor}
$ V(\sqrt{\I})=V(\I)$ holds for any ideal $\I\subseteq T$.
\end{cor}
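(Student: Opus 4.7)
The plan is to prove the two inclusions $V(\sqrt{\I})\subseteq V(\I)$ and $V(\I)\subseteq V(\sqrt{\I})$ separately, both of which are essentially formal consequences of what is already established.

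For $V(\sqrt{\I})\subseteq V(\I)$, I would first observe that $\I\subseteq\sqrt{\I}$: for any transitive $X\in\Ob(\Gs)$ and any $a\in\I(X)$, we have $\langle a\rangle\subseteq\I$, so $\langle a\rangle^1\subseteq\I$, giving $a\in\sqrt{\I}(X)$. Hence any prime $\p$ containing $\sqrt{\I}$ a fortiori contains $\I$, so $\p\in V(\I)$.

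For the reverse inclusion $V(\I)\subseteq V(\sqrt{\I})$, I would invoke Proposition \ref{PropRoot2} directly. That proposition gives
\[ \sqrt{\I}\ \subseteq\ \underset{\p\in V(\I)}{\bigcap}\p, \]
so for any $\p\in V(\I)$ we have $\sqrt{\I}\subseteq\p$, i.e. $\p\in V(\sqrt{\I})$.

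There is no real obstacle here; the entire work has been done in Proposition \ref{PropRoot2}, and the corollary is just the bookkeeping statement that $\sqrt{\,\cdot\,}$ leaves $V(\,\cdot\,)$ invariant. The only thing worth double-checking is the trivial inclusion $\I\subseteq\sqrt{\I}$, which I verified above using $n=1$ in the defining condition of $\sqrt{\I}$.
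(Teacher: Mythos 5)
Your proof is correct and follows exactly the paper's argument: one inclusion from $\I\subseteq\sqrt{\I}$ (which you verify via $n=1$), and the other directly from Proposition \ref{PropRoot2}. Nothing to add.
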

\begin{proof}
By the definition of $\sqrt{\I}$, we have $\I\subseteq\sqrt{\I}$, which implies $V(\I)\supseteq V(\sqrt{\I})$. Conversely, Proposition \ref{PropRoot2} means $V(\I)\subseteq V(\sqrt{\I})$.
\end{proof}

\smallskip

\subsection{Monomorphic restriction condition}

Consider the following condition for a Tambara functor $T$, which we call the \lq{\it monomorphic restriction condition}'. This condition will be frequently used in the next subsection. 

\begin{dfn}\label{DefMRC}
A contravariant functor
\[ F\colon\Gs\rightarrow\Sett \]
is said to satisfy {\it monomorphic restriction condition} {\rm (MRC)} if it satisfies the following.
\begin{itemize}
\item[{\rm (MRC)}]\label{MRC} For any $f\in\Gs(X,Y)$ between transitive $X,Y\in\Ob(\Gs)$, the restriction $f^{\ast}$ is monomorphic.
\end{itemize}
Obviously, {\rm (MRC)} is equivalent to the following.
\begin{itemize}
\item[{\rm (MRC)}$\ppr$] For any transitive $Y\in\Ob(\Gs)$ and $\gamma\in\Gs(G/e,Y)$, the restriction $\gamma^{\ast}$ is monomorphic. (Moreover, this condition does not depend on the choice of $\gamma$.)
\end{itemize}

\medskip

\noindent Abbreviately, we say a Tambara functor $T$ {\it satisfies (MRC)} if $T^{\ast}$ satisfies {\rm (MRC)}.
\end{dfn}

\begin{ex}\label{ExMRC}
A Tambara functor $T$ satisfies {\rm (MRC)} for example in the following cases.
\begin{enumerate}
\item $T$ is a fixed point functor $\mathcal{P}_R$ associated to some $G$-ring $R$.
\item $T$ is additively cohomological, and $T(X)$ has no $|G|$-torsion 
for each transitive $X\in\Ob(\Gs)$. Here $|G|$ denotes the order of $G$.
\end{enumerate}
\end{ex}
\begin{proof}
{\rm (1)} follows from the definition of $\mathcal{P}_R$.
We show {\rm (2)}. Since $T$ is additively cohomological, for any $f\in\Gs(X,Y)$ between transitive $X,Y\in\Ob(\Gs)$, we have
\[ f_+f^{\ast}(y)=(\deg f)\, y\qquad({}^{\forall}y\in T(Y)). \]
Since $\deg f$ divides $|G|$ and $T(Y)$ has no $|G|$-torsion, this implies $f_+f^{\ast}$ is monomorphic. Especially $f^{\ast}$ is monomorphic.
\end{proof}

\begin{prop}\label{PropMRCFixed}
For any Tambara functor $T$ on $G$, the following are equivalent.
\begin{enumerate}
\item $T$ satisfies {\rm (MRC)}.
\item $T$ is a Tambara subfunctor $T\subseteq\mathcal{P}_R$ for some $G$-ring $R$.
\end{enumerate}
\end{prop}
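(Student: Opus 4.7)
The implication (2) $\Rightarrow$ (1) should be essentially immediate: any fixed point functor $\mathcal{P}_R$ satisfies (MRC) by Example~\ref{ExMRC}(1), and a Tambara subfunctor $T\subseteq\mathcal{P}_R$ would inherit the property since its restriction maps are simply restrictions of the monomorphic restriction maps of $\mathcal{P}_R$.

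For the substantive direction (1) $\Rightarrow$ (2), my plan is to take $R=T(G/e)$ as the ambient $G$-ring and exhibit a canonical monomorphism $\Phi\colon T\hookrightarrow\mathcal{P}_R$. First I would put a $G$-ring structure on $R$: for each $g\in G$, right translation $\tau_g\colon G/e\to G/e,\ h\mapsto hg$, is a $\Gs$-morphism satisfying $\tau_{g_1g_2}=\tau_{g_2}\circ\tau_{g_1}$, so the rule $g\cdot r:=\tau_g^{\ast}(r)$ yields a left $G$-action on $R$ by ring automorphisms. Then, for each $X\in\Ob(\Gs)$ and each $p\in X$, let $\gamma_p\colon G/e\to X$ denote the unique $G$-map with $\gamma_p(e)=p$; I would define
\[
\Phi_X\colon T(X)\to\mathcal{P}_R(X),\qquad \Phi_X(x)(p):=\gamma_p^{\ast}(x).
\]
The identity $\gamma_{gp}=\gamma_p\circ\tau_g$ makes $\Phi_X(x)$ genuinely $G$-equivariant, and $\Phi_X$ is a ring homomorphism because each $\gamma_p^{\ast}$ is.

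The key step is then naturality of $\Phi$ with respect to the three kinds of structure morphisms. Naturality along $f^{\ast}$ drops out of $f\circ\gamma_p=\gamma_{f(p)}$. For the transfers, given $f\in\Gs(X,Y)$ and $q\in Y$, I would form the pull-back of $f$ along $\gamma_q\colon G/e\to Y$; an elementary inspection identifies this pull-back with the fold map $\coprod_{p\in f^{-1}(q)}G/e\to G/e$, while the other projection restricts to $\gamma_p$ on the summand indexed by $p$. The Mackey base-change axiom in $T^{\alpha}$ then gives
\[
\gamma_q^{\ast}f_+(x)=\sum_{p\in f^{-1}(q)}\gamma_p^{\ast}(x),
\]
and the semi-Mackey base-change axiom in $T^{\mu}$ (which holds by Definition~\ref{DefTamFtr}) gives
\[
\gamma_q^{\ast}f_{\bullet}(x)=\prod_{p\in f^{-1}(q)}\gamma_p^{\ast}(x),
\]
which are precisely the pointwise defining formulas for $f_+$ and $f_{\bullet}$ in $\mathcal{P}_R$. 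Finally, injectivity of each $\Phi_X$ would follow from (MRC)$\ppr$: if $\Phi_X(x)=0$, then $\gamma_p^{\ast}(x)=0$ for every $p$, and applying (MRC)$\ppr$ on each transitive component of $X$ forces $x=0$.

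I expect the main friction to be bookkeeping around left/right translation conventions, so that $\Phi_X(x)$ is honestly $G$-equivariant for the chosen $G$-action on $R$ and the pull-back identifications along $\gamma_q$ line up with the pointwise sum and product on $\mathcal{P}_R$. There is no real conceptual obstruction, as the transfer naturality reduces directly to the Beck--Chevalley identities already encoded in the Mackey and semi-Mackey axioms of a Tambara functor.
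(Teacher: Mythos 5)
Your proof is correct and takes essentially the same route as the paper: both set $R=T(G/e)$, define the comparison map on $T(X)$ via the restrictions $\gamma_p^{\ast}$ along points $\gamma_p\in\Gs(G/e,X)$, use {\rm (MRC)} for injectivity, and recover compatibility with transfers from the Mackey/semi-Mackey base-change along the pull-back of $f$ over $\gamma_q\colon G/e\to Y$ (which becomes the fold map $\coprod_{p\in f^{-1}(q)}G/e\to G/e$). The only organizational difference is that the paper packages the transfer-compatibility step into the reusable Lemma~\ref{LemMRCFixed} (the covariant structure of a semi-Mackey functor satisfying {\rm (MRC)} is determined by its contravariant part), which it later reuses in Example~\ref{ExMRCZ}, whereas you verify the $f_+$ and $f_\bullet$ naturality squares directly.
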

\begin{proof}
Since $\mathcal{P}_R$ satisfies {\rm (MRC)} 
, obviously {\rm (2)} implies {\rm (1)}. To show the converse, we use the following.
\begin{lem}\label{LemMRCFixed}
$\ \ $
\begin{enumerate}
\item Let $M=(M^{\ast},M_{\ast})$ and $M\ppr=(M^{\prime\ast},M\ppr_{\ast})$ be two semi-Mackey functors on $G$, satisfying $M^{\ast}\subseteq M^{\prime\ast}$ as monoid-valued contravariant functors $\Gs\rightarrow\Mon$.\\
$($Namely, $M(X)\subseteq M\ppr(X)$ is a submonoid for each $X\in\Ob(\Gs)$ and satisfy $M^{\prime\ast}(f)|_{M(X)}=M^{\ast}(f)$ for any morphism $f\in\Gs(X,Y)$.$)$

If $M^{\prime\ast}$ $($and thus also $M^{\ast}$$)$ satisfies {\rm (MRC)}, then we have $M\subseteq M\ppr$ as semi-Mackey functors.

\item If $T$ and $T\ppr$ are Tambara functors satisfying {\rm (MRC)} and if there exists an inclusive morphism of ring-valued contravariant functors $\iota\colon T^{\ast}\hookrightarrow T^{\prime\ast}$,
then $\iota$ gives an inclusion of Tambara functors. Moreover if $\iota\colon T^{\ast}\overset{\cong}{\longrightarrow}T^{\prime\ast}$ is an isomorphism of ring-valued functors, then $\iota$ gives an isomorphism of Tambara functors,
\end{enumerate}
\end{lem}

\medskip

Suppose Lemma \ref{LemMRCFixed} is shown. Let $T$ be a Tambara functor satisfying {\rm (MRC)}. Endow $R=T(G/e)$ with the induced $G$-action, and take the fixed point functor $\mathcal{P}_R$ associated to $R$.

For any $H\le G$, if we let $p^H_e\colon G/e\rightarrow G/H$ be the canonical projection, then we have
\[ T(G/H)\overset{(p^H_e)^{\ast}}{\hookrightarrow} T(G/e) \]
by {\rm (MRC)}. Since this is $G$-equivariant and $H$ acts on $T(G/H)$ trivially, we obtain
\[ T(G/H)\overset{(p^H_e)^{\ast}}{\hookrightarrow} T(G/e)^H. \]
This means $(p^H_e)^{\ast}$ factors through $R^H=\mathcal{P}_{R}(G/H)$,
namely, there exists (uniquely) a ring homomorphism $\iota_H\colon T(G/H)\rightarrow\mathcal{P}_{R}(G/H)$ which makes the following diagram commutative.
\[
\xy
(-12,6)*+{T(G/e)}="0";
(12,6)*+{\mathcal{P}_R(G/e)}="2";
(-12,-6)*+{T(G/H)}="4";
(12,-6)*+{\mathcal{P}_R(G/H)}="6";
{\ar@{=} "0";"2"};
{\ar@{^(->}^{(p^H_e)^{\ast}} "4";"0"};
{\ar_{{}^{\exists}\iota_H} "4";"6"};
{\ar@{^(->}_{} "6";"2"};
{\ar@{}|\circlearrowright "0";"6"};
\endxy
\]

Thus $\iota_H$ becomes monomorphic by this commutativity.
It can be shown that $\{\iota_H\}_{H\le G}$ is compatible with conjugation maps. 
Besides, for any $K\le H\le G$, we obtain a commutative diagram
\[
\xy
(-12,6)*+{T(G/K)}="0";
(12,6)*+{\mathcal{P}_R(G/e)}="2";
(-12,-6)*+{T(G/H)}="4";
(12,-6)*+{\mathcal{P}_R(G/H)}="6";
(20,-7)*+{.}="7";
{\ar@{^(->}^{\iota_K} "0";"2"};
{\ar@{^(->}^{(p^H_K)^{\ast}} "4";"0"};
{\ar@{^(->}_{\iota_H} "4";"6"};
{\ar@{^(->}_{} "6";"2"};
{\ar@{}|\circlearrowright "0";"6"};
\endxy
\]

By the additivity of $T^{\ast}$ and $\mathcal{P}_R^{\ast}$, it turns out that $\{\iota_H\}_{H\le G}$ yields a injective natural transformation $\iota\colon T^{\ast}\hookrightarrow\mathcal{P}_R^{\ast}$ of ring-valued contravariant functors. 
By Lemma \ref{LemMRCFixed}, 
$\iota\colon T\hookrightarrow\mathcal{P}_R$ becomes a inclusive morphism of Tambara functors, and $T$ can be regarded as a Tambara subfunctor of $\mathcal{P}_R$ through $\iota$.

\bigskip

Thus it remains to show Lemma \ref{LemMRCFixed}.
\begin{proof}[Proof of Lemma \ref{LemMRCFixed}]
Remark that {\rm (2)} follows immediately if we apply {\rm (1)} to the additive and multiplicative parts respectively. Thus we only show {\rm (1)}.

It suffices to show
\[ M\ppr_{\ast}(f)|_{M(X)}=M_{\ast}(f) \]
for any morphism $f\in\Gs(X,Y)$. 
Furthermore, it is enough to show this for transitive $X,Y\in\Ob(\Gs)$.

Let $f\in\Gs(X,Y)$ be any morphism between transitive $X,Y\in\Ob(\Gs)$.
Let $\gamma_Y\in\Gs(G/e,Y)$ be a morphism, and take a pullback diagram
\begin{equation}\label{DiagPB}
\xy
(-8,6)*+{X\ppr}="0";
(8,6)*+{X}="2";
(-8,-6)*+{G/e}="4";
(8,-6)*+{Y}="6";
(11,-7)*+{.}="7";
(0,0)*+{\square}="8";
{\ar^{} "0";"2"};
{\ar_{} "0";"4"};
{\ar^{f} "2";"6"};
{\ar_{\gamma_Y} "4";"6"};
\endxy
\end{equation}
Since any point of $X\ppr$ has the stabilizer equal to $e$, we may assume $(\ref{DiagPB})$ is of the form
\[
\xy
(-14,8)*+{\underset{1\le i\le n}{\amalg} G/e}="0";
(14,8)*+{X}="2";
(-14,-8)*+{G/e}="4";
(14,-8)*+{Y}="6";
(0,0)*+{\square}="8";
{\ar^{\underset{1\le i\le n}{\cup}\zeta_i} "0";"2"};
{\ar_{\nabla} "0";"4"};
{\ar^{f} "2";"6"};
{\ar_{\gamma_Y} "4";"6"};
\endxy
\]
for some $\zeta_1,\ldots,\zeta_n\in\Gs(G/e,X)$.

By the Mackey condition, for any $x\in M(X)\ (\subseteq M\ppr(X))$, we have
\begin{eqnarray*}
M^{\prime\ast}(\gamma_Y)M_{\ast}(f)(x)&=&M^{\ast}(\gamma_Y)M_{\ast}(f)(x)\\
&=&\sum_{1\le i\le n}M^{\ast}(\zeta_i)(x)\\
&=&\sum_{1\le i\le n}M^{\prime\ast}(\zeta_i)(x)\\
&=&M^{\prime\ast}(\gamma_Y)M\ppr_{\ast}(f)(x).
\end{eqnarray*}
Since $M^{\prime\ast}(\gamma_Y)$ is monomorphic, we obtain
\[ M_{\ast}(f)(x)=M\ppr_{\ast}(f)(x)\quad({}^{\forall}x\in M(X)),  \]
which means $M_{\ast}(f)=M\ppr_{\ast}(f)|_{M(X)}$.
\end{proof}
\end{proof}

As in the proof of Lemma \ref{LemMRCFixed}, any Tambara functor $T$ satisfying {\rm (MRC)} becomes a Tambara subfunctor of $\mathcal{P}_{T(G/e)}$. It will be a natural question whether, for a ring $R$, there exists a non-trivial Tambara subfunctor $T\subseteq\mathcal{P}_R$ satisfying $T(G/e)=R$.
The following gives some criterion and a counterexample.
\begin{ex}
$\ \ $
\begin{enumerate}
\item Let $\mathbb{F}$ be a field satisfying $(\mathrm{char}(\mathbb{F}),|G|)=1$.
If a Tambara subfunctor $T\subseteq \mathcal{P}_{\mathbb{F}}$ satisfies $T(G/e)=\mathbb{F}$, then it satisfies $T=\mathcal{P}_{\mathbb{F}}$.
\item Let $G=\mathbb{Z}/p\mathbb{Z}$ for some prime number $p$, let $k$ be a field with characteristic $p$, and let $R=k[\mathbf{X}]$ be a polynomial ring, on which $G$ acts trivially. Then $\mathcal{P}_R$ admits a Tambara subfunctor $T$ defined by $T(G/e)=R$ and $T(G/G)=R^p=\{ f^p\mid f\in R \}$.
\end{enumerate}
\end{ex}
\begin{proof}
\begin{enumerate}
\item It suffices to show $T(G/H)=\mathcal{P}_{\mathbb{F}}(G/H)$ holds for any $H\le G$.
Let $x\in \mathcal{P}_{\mathbb{F}}(G/H)=\mathbb{F}^H$ be any element. Remark that we have
\begin{equation}\label{EqFix}
(p^H_e)_+(p^H_e)^{\ast}(x)=|H|\cdot x,
\end{equation}
where $|H|$ is the order of $H$. Since $(\mathrm{char}(\mathbb{F}),|G|)=1$ by assumption, 
we can easily show $\frac{1}{|H|}x\in\mathcal{P}_{\mathbb{F}}(G/H)$. Applying $(\ref{EqFix})$ to $\frac{1}{|H|}x$, we obtain
\[ (p^H_e)_+(p^H_e)^{\ast}(\frac{1}{|H|}x)=|H|\cdot\frac{1}{|H|}x=x. \]

Since $T$ is closed under $\mathcal{P}_{\mathbb{F}+}$ and $T(G/e)=\mathbb{F}\ni\frac{1}{|H|}(p^H_e)^{\ast}(x)
$, it follows that
\[ x=(p^H_e)_+(\frac{1}{|H|}(p^H_e)^{\ast}(x))\ \in T(G/H). \]
\item Since $G$ acts on $R$ trivially, it suffices to show $T$ is closed under $\mathcal{P}_R^{\ast}(p^G_e)$, $\mathcal{P}_{R+}(p^G_e)$, and $\mathcal{P}_{R\bullet}(p^G_e)$.

Since $T(G/e)=R$, obviously we have $\mathcal{P}_R^{\ast}(p^G_e)(T(G/G))\subseteq T(G/e)$.
Moreover, since
\[ (\mathcal{P}_{R+}(p^G_e))(f)=pf=0,\quad (\mathcal{P}_{R\bullet}(p^G_e))(f)=f^p \]
for any $f\in R$, we obtain
\begin{eqnarray*}
\mathcal{P}_{R+}(p^G_e)(T(G/e))&\subseteq&T(G/G)\\
\mathcal{P}_{R\bullet}(p^G_e)(T(G/e))&\subseteq&T(G/G).
\end{eqnarray*}
\end{enumerate}
\end{proof}

Let $T$ be any Tambara functor. As in Proposition \ref{PropIM}, there exists an ideal
\[ \I^T_{(0)}=\I_{(0)}\subseteq T \]
associated to the zero-ideal $(0)\subseteq T(G/e)$.
If we define $\TMRC\in\Ob(\TamG)$ by $\TMRC=T/\I^T_{(0)}$, then $\TMRC$ gives the \lq {\it canonical {\rm (MRC)}-zation}' of $T$.
More precisely, we have the following.
\begin{thm}\label{ThmMRCUniv}
Let $\TamM\subseteq\TamG$ denote the full subcategory of Tambara functors satisfying {\rm (MRC)}. Then the following holds.
\begin{enumerate}
\item For any $T\in\Ob(\TamG)$, we have $\TMRC\in\Ob(\TamM)$.
\item The correspondence $T\mapsto\TMRC$ gives a functor
\[ (\ )_{\mathrm{MRC}}\colon\TamG\rightarrow\TamM. \]
\item The functor in {\rm (2)} is left adjoint to the inclusion $\TamM\hookrightarrow\TamG$.
\end{enumerate}
\end{thm}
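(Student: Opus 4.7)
The plan is to treat the three parts in order, using throughout the explicit description
$\mathcal{I}^T_{(0)}(X)=\bigcap_{\gamma\in\Gs(G/e,X)}\Ker(\gamma^\ast)$
supplied by Proposition \ref{PropIM}; in particular, since $\mathrm{id}_{G/e}$ is one such $\gamma$, we have $\mathcal{I}^T_{(0)}(G/e)=0$, so $\TMRC(G/e)=T(G/e)$. This identification is what makes the quotient construction compatible with the (MRC) test.

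For (1), I would verify the equivalent formulation (MRC)$'$: for any transitive $Y\in\Ob(\Gs)$ and any $\gamma\in\Gs(G/e,Y)$, the induced restriction $\gamma^\ast\colon\TMRC(Y)\to T(G/e)$ is injective. Suppose $y\in T(Y)$ has $\gamma^\ast(y)=0$; I must show $y\in\mathcal{I}^T_{(0)}(Y)$, i.e.\ that $\delta^\ast(y)=0$ for every $\delta\in\Gs(G/e,Y)$. The key point is that when $Y$ is transitive, the group $\mathrm{Aut}_{\Gs}(G/e)\cong G$ acts transitively on $\Gs(G/e,Y)$ by precomposition, so every such $\delta$ can be written as $\delta=\gamma\circ\sigma$ for some automorphism $\sigma$ of $G/e$. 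Hence $\delta^\ast=\sigma^\ast\gamma^\ast$, and $\gamma^\ast(y)=0$ immediately yields $\delta^\ast(y)=0$.

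For (2), given $\varphi\colon T\to S$ in $\TamG$, the naturality identity $\gamma^\ast\varphi_X=\varphi_{G/e}\gamma^\ast$ shows that $\varphi_X(\mathcal{I}^T_{(0)}(X))\subseteq\mathcal{I}^S_{(0)}(X)$ for every $X$, so $\varphi$ descends to a morphism $\varphi_{\mathrm{MRC}}\colon\TMRC\to\SMRC$; functoriality is then immediate. For (3), the projection $\eta_T\colon T\to\TMRC$ plays the role of the unit. Given any $\varphi\colon T\to S$ with $S\in\Ob(\TamM)$, the same naturality identity gives $\gamma^\ast\varphi_X(x)=0$ for every $x\in\mathcal{I}^T_{(0)}(X)$ and every $\gamma\in\Gs(G/e,X)$. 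When $X$ is transitive, (MRC) on $S$ forces $\varphi_X(x)=0$; the general case reduces to the transitive one because both $\mathcal{I}^T_{(0)}$ and $S$ split along disjoint unions of $G$-sets. Hence $\varphi$ factors uniquely as $\tilde{\varphi}\circ\eta_T$, and this factorization is natural in both variables, yielding the adjunction.

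The only genuinely delicate step is the transitivity argument in (1): everything else is essentially formal, provided one consistently keeps in mind that $\TMRC(G/e)=T(G/e)$, that (MRC) equivalently tests via maps out of $G/e$, and that ideals, morphisms, and (MRC) all behave compatibly with decomposition of $X$ into orbits.
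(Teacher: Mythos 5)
Your proof is correct and follows the same approach as the paper: identify $\TMRC(G/e)=T(G/e)$, verify {\rm (MRC)} by testing restriction along maps from $G/e$, and obtain (2) and (3) from the inclusion $\I^T_{(0)}\subseteq\varphi^{-1}(\I^S_{(0)})$ supplied by naturality of restriction. The transitivity argument in your step (1), via the action of $\mathrm{Aut}_{\Gs}(G/e)\cong G$ on $\Gs(G/e,Y)$ by precomposition, is precisely the justification the paper leaves implicit when it asserts the equality $(T^\ast(\gamma))^{-1}(I)=\I_I(X)$.
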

\begin{proof}
\begin{enumerate}
\item For any $G$-invariant ideal $I\subseteq T(G/e)$, the quotient $T/\I_I$ satisfies {\rm (MRC)}. In fact, for any transitive $X\in\Ob(\Gs)$ and $\gamma\in\Gs(G/e,X)$, we have
\[ ((T/\I_I)^{\ast}(\gamma))^{-1}(0)=(T^{\ast}(\gamma))^{-1}(I)/(\I_I(X))=\I_I(X)/\I_I(X)=0. \]
\item Let $\varphi\colon T\rightarrow S$ be a morphism in $\TamG$. For any $X\in\Ob(\Gs)$ and any $\gamma\in\Gs(G/e,X)$, we have a commutative diagram
\[
\xy
(-12,7)*+{T(G/e)}="0";
(12,7)*+{S(G/e)}="2";
(-12,-7)*+{T(X)}="4";
(12,-7)*+{S(X)}="6";
(17,-8)*+{.}="7";
{\ar^{\varphi_{G/e}} "0";"2"};
{\ar^{\gamma^{\ast}} "4";"0"};
{\ar_{\gamma^{\ast}} "6";"2"};
{\ar_{\varphi_Y} "4";"6"};
{\ar@{}|\circlearrowright "0";"6"};
\endxy
\]
Thus we have
\begin{eqnarray*}
\I^T_{(0)}(X)&=&\underset{\gamma\in\Gs(G/e,X)}{\bigcap}(\gamma^{\ast})^{-1}(0)\\
&\subseteq&\underset{\gamma\in\Gs(G/e,X)}{\bigcap}\varphi_X^{-1}((\gamma^{\ast})^{-1}(0))\ =\ \varphi_X^{-1}(\I^S_{(0)}(X)),
\end{eqnarray*}
which implies $\I^T_{(0)}\subseteq\varphi^{-1}(\I^S_{(0)})$.
Thus there exists uniquely
\[ \bar{\varphi}\in\TamG(\TMRC,\SMRC)\ \ (=\TamM(\TMRC,\SMRC)) \]
which makes the following diagram commutative,
\[
\xy
(-9,6)*+{T}="0";
(9,6)*+{S}="2";
(-9,-6)*+{\TMRC}="4";
(9,-6)*+{\SMRC}="6";
{\ar^{\varphi} "0";"2"};
{\ar_{p^T} "0";"4"};
{\ar^{p^S} "2";"6"};
{\ar_{\bar{\varphi}} "4";"6"};
{\ar@{}|\circlearrowright "0";"6"};
\endxy
\]
where $p^T$ and $p^S$ denote the canonical projections of Tambara functors. By the uniqueness, immediately we obtain a functor $(\ )_{\mathrm{MRC}}\colon\TamG\rightarrow\TamM$.

\item This also follows immediately from the above argument. In fact, for any $T\in\Ob(\TamG)$ and any $S\in\Ob(\TamM)$, we have a natural bijection
\[ -\circ p^T\colon \TamM(\TMRC,S)\overset{\simeq}{\longrightarrow}\TamG(T,S). \]
\end{enumerate}
\end{proof}

\begin{ex}\label{ExMRCZ}
$\ \ $
\begin{enumerate}
\item If $T$ satisfies {\rm (MRC)}, then naturally we have $T=\TMRC$.
\item $\Omega_{\mathrm{MRC}}\cong\mathcal{P}_{\mathbb{Z}}$, where $G$ acts trivially on $\mathbb{Z}$ in the right hand side.
\end{enumerate}
\end{ex}
\begin{proof}
{\rm (1)} is trivial. We show {\rm (2)}.
First we remark the following.
\begin{rem}\label{RemOmega}
For each $H\le G$, let $\mathcal{O}(H)$ denote a set of representatives of conjugacy classes of subgroups of $H$. $\mathcal{O}(H)$ has a partial order $\po$, defined by 
\[ K\po L\quad\Leftrightarrow\quad K\le L^h\ \text{for some}\ h\in H \]
for $K,L\in\mathcal\mathcal{O}(H)$. If $K\po L$ and $K\ne L^h$ for any $h\in H$, we write $K\pn L$.

$\Omega(G/H)$ is a free module over
\[ \{ G/K=(G/K\overset{p^H_K}{\rightarrow}G/H) \mid K\in\mathcal{O}(H) \}, \]
where $p^H_K\colon G/K\rightarrow G/H$ is the canonical projection.

Especially, for any transitive $X\cong G/H\in\Ob(\Gs)$, any $a\in\Omega(X)$ can be decomposed uniquely as
\begin{equation}\label{Eqa}
a=\sum_{K\in\mathcal{O}(H)} m_K\, G/K\qquad(m_K\in\mathbb{Z}).
\end{equation}
\end{rem}



We construct an isomorphism of Tambara functors
\[ \wp\colon\Omega_{\mathrm{MRC}} \cong\mathcal{P}_{\mathbb{Z}}. \]
Let $\wp_e\colon\Omega_{\mathrm{MRC}}(G/e)\overset{\cong}{\longrightarrow}\mathbb{Z}$ be the ring isomorphism given by
\[ \wp_e(m\, (G/e\rightarrow G/e))=m\quad ({}^{\forall} m\in\mathbb{Z}). \]
For any $H\le G$, put $\wp_H=\wp_e\circ(p^H_e)^{\ast}$. This is a ring homomorphism. 
\[
\xy
(32,0)*+{\mathbb{Z}}="0";
(0,18)*+{\Omega_{\mathrm{MRC}}\, (G/G)}="2";
(-12,6)*+{\Omega_{\mathrm{MRC}}\, (G/H)}="4";
(-12,-6)*+{\Omega_{\mathrm{MRC}}\, (G/K)}="6";
(0,-18)*+{\Omega_{\mathrm{MRC}}\, (G/e)}="8";
(-30,19)*+{}="3";
(-40,0)*+{}="5";
(-30,-19)*+{}="7";
{\ar^{\wp_G} "2";"0"};
{\ar^{\wp_H} "4";"0"};
{\ar_{\wp_K} "6";"0"};
{\ar_{\wp_e} "8";"0"};
{\ar_<<<<<{(p^G_H)^{\ast}} "2";"4"};
{\ar_{(p^H_K)^{\ast}} "4";"6"};
{\ar_>>>>>{(p^K_e)^{\ast}} "6";"8"};
{\ar@{}|\circlearrowright "0";"3"};
{\ar@{}|\circlearrowright "0";"5"};
{\ar@{}|\circlearrowright "0";"7"};
\endxy
\quad({}^{\forall}K\le {}^{\forall}H\le G)
\]

Remark that $(p^H_e)^{\ast}\colon\Omega_{\mathrm{MRC}}(G/H)\rightarrow\Omega_{\mathrm{MRC}}(G/e)$ is monomorphic by Theorem \ref{ThmMRCUniv}. Thus $\wp_H$ is monomorphic for any $H\le G$.
On the other hand, for any $m\in\mathbb{Z}$ we have
\[ \wp_H(m(H/H\overset{\id}{\rightarrow}H/H))=\wp_e(m(e/e\overset{\id}{\rightarrow}e/e))=m, \]
and thus $\wp_H$ becomes surjective.
Thus $\wp_H$ is an isomorphism for each $H\le G$.
As a corollary $(p^H_K)^{\ast}$ becomes an isomorphism, and we obtain a commutative diagram
\[
\xy
(-12,6)*+{\Omega_{\mathrm{MRC}}\, (G/H)}="0";
(12,6)*+{\mathbb{Z}}="2";
(-12,-6)*+{\Omega_{\mathrm{MRC}}\, (G/K)}="4";
(12,-6)*+{\mathbb{Z}}="6";
(15,-7)*+{.}="7";
{\ar^<<<<<{\wp_H}_<<<<<{\cong} "0";"2"};
{\ar_{(p^H_K)^{\ast}} "0";"4"};
{\ar^{\id_{\mathbb{Z}}} "2";"6"};
{\ar_<<<<<{\wp_K}^<<<<<{\cong} "4";"6"};
{\ar@{}|\circlearrowright "0";"6"};
\endxy
\]

It can be easily shown that $\{ \wp_H \}_{H\le G}$ is also compatible with conjugation maps, and thus form an isomorphism
\[ \wp\colon\Omega_{\mathrm{MRC}}^{\ast}\overset{\cong}{\longrightarrow}\mathcal{P}_{\mathbb{Z}}^{\ast} \]
of ring-valued contravariant functors $\Gs\rightarrow \Ring$.
By Lemma \ref{LemMRCFixed}, $\wp$ gives an isomorphism of Tambara functors
$\wp\colon \Omega_{\mathrm{MRC}}\overset{\cong}{\longrightarrow}\mathcal{P}_{\mathbb{Z}}$.
\end{proof}

\begin{ex}\label{ExNot}
As we have seen in the proof of {\rm (1)} in Theorem \ref{ThmMRCUniv}, if an ideal $\I\subseteq T$ satisfies $T/\I\in\!\!\!\!\!/\ \Ob(\TamM)$, then $\I$ can not be written as $\I=\I_I$ for any $G$-invariant ideal $I\subseteq T(G/e)$.

Consequently, if $\varphi\colon T\rightarrow S$ is a morphism of Tambara functors and if $S$ does not satisfy {\rm (MRC)}, then $\Ker\,\varphi$ is not of the form $\I=\I_I$, since we have $T/\Ker\,\varphi\cong\mathit{Im}\,\varphi\subseteq S$ by Remark \ref{RemHom}.

For example, if we use the Tambarization functor (\cite{N_TamMack}, \cite{N_DressPolyHopf}), we can construct such an example. For any semi-Mackey functor $M$ on $G$, if we denote its Tambarization by $\Omega [M]$, then by the functoriality of the Tambarization, 
\[
\xy
(-14,0)*+{0}="0";
(0,-4)*+{}="1";
(0,4)*+{M}="2";
(14,0)*+{0}="4";
{\ar^{0} "0";"2"};
{\ar^{0} "2";"4"};
{\ar@/_0.80pc/_{\id_0} "0";"4"};
{\ar@{}|\circlearrowright "1";"2"};
\endxy
\]
yields a commutative diagram of Tambara functors
\[
\xy
(-14,0)*+{\Omega}="0";
(0,-4)*+{}="1";
(0,4)*+{\Omega [M]}="2";
(14,0)*+{\Omega}="4";
(17,-1)*+{.}="5";
{\ar "0";"2"};
{\ar^{p} "2";"4"};
{\ar@/_0.80pc/_{\id_{\Omega}} "0";"4"};
{\ar@{}|\circlearrowright "1";"2"};
\endxy
\]
Since $\Omega$ does not satisfy {\rm (MRC)}, $\Ker\, p$ is not of the form $\I_I$. Also remark that we have $\Omega [M]/\Ker\, p\cong \Omega$.
\end{ex}

\smallskip

\subsection{Domains and fields}
In the commutative ring theory, a ring $R$ is an integral domain (resp. a field) if and only if $(0)\subsetneq R$ is prime (resp. maximal).
In a similar way we define a \lq domain-like' (resp. \lq field-like') Tambara functor, and show some analogous properties.

\begin{dfn}\label{DefDF}
Let $T$ be a Tambara functor.
\begin{enumerate}
\item $T$ is {\it domain-like} if $(0)\subsetneq T$ is prime.
\item $T$ is {\it field-like} if $(0)\subsetneq T$ is maximal.
\end{enumerate}
\end{dfn}

As a corollary of the arguments so far, we have:
\begin{cor}\label{CorDF}
Let $T$ be a Tambara functor, and $\I\subseteq T$ be an ideal.
\begin{enumerate}
\item $\I\subsetneq T$ is a maximal ideal if and only if $T/\I$ is field-like.
\item $\I\subsetneq T$ is a prime ideal if and only if $T/\I$ is domain-like.
\item If $T$ is field-like, then $T$ is domain-like.
\item If $T$ is domain-like, then $T$ is reduced.
\end{enumerate}
\end{cor}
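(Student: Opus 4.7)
The statement is a corollary assembled entirely from machinery already established, so the plan is to trace each of the four assertions back to a single earlier result rather than to build anything new.

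For parts (1) and (2), I would invoke the ideal-correspondence given in Proposition \ref{Prop1to1}(3). Under the projection $p\colon T\to T/\I$, ideals of $T/\I$ are in inclusion-preserving bijection with ideals of $T$ containing $\I$, and $\I$ itself corresponds to $(0)\subseteq T/\I$. Maximality of $\I$ in the first set translates to maximality of $(0)$ in the second, giving (1). For (2), I would additionally use Theorem \ref{ThmMorphSpec} (or rather its Corollary \ref{CorHomeoSpec}, which says $p^a\colon\Spec(T/\I)\to V(\I)$ is a homeomorphism, in particular a bijection sending $(0)\mapsto\I$); thus $\I$ is prime in $T$ iff $(0)$ is prime in $T/\I$, which is the definition of $T/\I$ being domain-like.

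Part (3) is a direct specialization of Proposition \ref{PropPrimeMax}: if $T$ is field-like then $(0)\subsetneq T$ is maximal, hence prime by that proposition, hence $T$ is domain-like.

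Part (4) follows tautologically from the definitions: if $T$ is domain-like, then $(0)\in\Spec(T)$, so
\[ \underset{\p\in\Spec(T)}{\bigcap}\p\ \subseteq\ (0), \]
and since the reverse inclusion is automatic this intersection equals $(0)$, i.e.\ $T$ is reduced.

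There is no real obstacle here; all four assertions are routine translations of established correspondences. The only thing to be careful about is citing the right form of Proposition \ref{Prop1to1}/Corollary \ref{CorHomeoSpec} in (1) and (2) so that the reader sees the bijection of ideals (for maximality) and the bijection of spectra (for primality) are both being applied to the projection $p\colon T\to T/\I$.
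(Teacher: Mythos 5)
Your proof is correct and matches the paper's own approach exactly: the paper cites Proposition \ref{Prop1to1} for (1), Theorem \ref{ThmMorphSpec} for (2), Proposition \ref{PropPrimeMax} for (3), and declares (4) trivial, which is precisely the chain of citations you use (with your additional reference to Corollary \ref{CorHomeoSpec} being a harmless packaging of Theorem \ref{ThmMorphSpec}). Nothing further to add.
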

\begin{proof}
{\rm (1)} follows from Proposition \ref{Prop1to1}.
{\rm (2)} follows from Theorem \ref{ThmMorphSpec}.
{\rm (3)} follows from Proposition \ref{PropPrimeMax}.
{\rm (4)} is trivial.
\end{proof}

\begin{cor}\label{CorDomain}
For any Tambara functor $T$, the following are equivalent.
\begin{enumerate}
\item $T$ is domain-like.
\item For any transitive $X,Y\in\Ob(\Gs)$ and any non-zero $a\in T(X), b\in T(Y)$, there exists a pair of diagrams
\[ C\overset{v}{\leftarrow}D\overset{w}{\rightarrow}X\quad\text{and}\quad C\overset{v\ppr}{\leftarrow}D\ppr\overset{w\ppr}{\rightarrow}Y, \]
such that
\[ (v_!w^{\ast}(a))\cdot(v\ppr_!w^{\prime\ast}(b))\ne0. \]
\end{enumerate}
\end{cor}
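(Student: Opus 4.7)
The plan is to deduce Corollary \ref{CorDomain} directly from Corollary \ref{CorPrimeDef} applied to the zero ideal $\p = (0)$. By Definition \ref{DefDF}, $T$ is domain-like exactly when $(0) \subsetneq T$ is prime, so Corollary \ref{CorPrimeDef} says $T$ is domain-like if and only if for every transitive $X, Y \in \Ob(\Gs)$ and every $a \in T(X)$, $b \in T(Y)$, the following two conditions are equivalent:
\begin{itemize}
\item[{\rm (i)}] $(v_! w^{\ast}(a)) \cdot (v\ppr_! w^{\prime\ast}(b)) = 0$ holds in $T(C)$ for every pair of diagrams $C \overset{v}{\leftarrow} D \overset{w}{\rightarrow} X$ and $C \overset{v\ppr}{\leftarrow} D\ppr \overset{w\ppr}{\rightarrow} Y$;
\item[{\rm (ii)}] $a = 0$ or $b = 0$.
\end{itemize}

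Next I would observe that the implication {\rm (ii)} $\Rightarrow$ {\rm (i)} is automatic, since $w^{\ast}$ is a ring homomorphism (so $w^{\ast}(0) = 0$) and $v_!(0) = 0$ by Lemma \ref{LemShriek}~{\rm (1)} (or directly from $v_!(x) = v_{\bullet}(x) - v_{\bullet}(0)$). Hence the content of domain-likeness is precisely the reverse implication {\rm (i)} $\Rightarrow$ {\rm (ii)}, which, rephrased by contrapositive, reads: if $a \ne 0$ and $b \ne 0$, then there exist diagrams as above for which $(v_! w^{\ast}(a)) \cdot (v\ppr_! w^{\prime\ast}(b)) \ne 0$. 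This is exactly condition {\rm (2)} of the corollary.

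The two directions of the corollary then follow at once: {\rm (1)} $\Rightarrow$ {\rm (2)} by taking any non-zero $a, b$ and invoking the contrapositive of {\rm (i)} $\Rightarrow$ {\rm (ii)}; and {\rm (2)} $\Rightarrow$ {\rm (1)} because {\rm (2)} supplies precisely the witness needed to establish {\rm (i)} $\Rightarrow$ {\rm (ii)} for every pair of non-zero elements, while the remaining case (where $a = 0$ or $b = 0$) is trivial. There is no real obstacle here: the whole proof is a direct translation via Corollary \ref{CorPrimeDef}, and I would expect to write it in just a few lines, with the only thing worth flagging being the trivial verification that $v_!$ annihilates $0$.
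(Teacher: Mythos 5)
Your proof is correct and follows exactly the same route as the paper, which simply states that the corollary ``immediately follows from Corollary \ref{CorPrimeDef}''; you have merely unpacked that reference by specializing to $\p=(0)$, noting that the implication (2)$\Rightarrow$(1) in Corollary \ref{CorPrimeDef} is automatic, and taking the contrapositive of the remaining direction.
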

\begin{proof}
This immediately follows from Corollary \ref{CorPrimeDef}.
\end{proof}

\begin{prop}\label{PropDF2}
If $T$ satisfies {\rm (MRC)} and $T(G/e)$ is an integral domain, then $T$ is domain-like.
\end{prop}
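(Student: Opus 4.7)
The plan is to verify the criterion in Corollary \ref{CorDomain} directly, using (MRC) to transfer the nonvanishing condition to the ring $T(G/e)$, where the hypothesis of being an integral domain can be applied.

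Let $X,Y\in\Ob(\Gs)$ be transitive, and let $a\in T(X)$, $b\in T(Y)$ be nonzero. Fix any $\gamma_X\in\Gs(G/e,X)$ and $\gamma_Y\in\Gs(G/e,Y)$; by (MRC) the restrictions $\gamma_X^{\ast}$ and $\gamma_Y^{\ast}$ are monomorphic, so
\[ \gamma_X^{\ast}(a)\ne 0\quad\text{and}\quad \gamma_Y^{\ast}(b)\ne 0 \]
in $T(G/e)$. I would then take $C=G/e$, $D=D\ppr=G/e$, and choose the diagrams
\[ G/e\overset{\id}{\leftarrow}G/e\overset{\gamma_X}{\rightarrow}X,\qquad G/e\overset{\id}{\leftarrow}G/e\overset{\gamma_Y}{\rightarrow}Y. \]
Since $\id_{G/e}$ is surjective, Lemma \ref{LemShriek} (2) gives $\id_!=\id_{\bullet}=\id_{T(G/e)}$, so
\[ \id_!\,\gamma_X^{\ast}(a)\cdot \id_!\,\gamma_Y^{\ast}(b)=\gamma_X^{\ast}(a)\cdot \gamma_Y^{\ast}(b). \]
Because $T(G/e)$ is an integral domain and both factors are nonzero, this product is nonzero in $T(G/e)$. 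By Corollary \ref{CorDomain}, $T$ is domain-like.

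There is no real obstacle here once the criterion of Corollary \ref{CorDomain} is invoked: the point is simply that (MRC) allows a transitive $G$-set to be probed by $G/e$ without losing information, and that the trivial choice $D=C=G/e$ with $v=v\ppr=\id$ reduces the product $(v_!w^{\ast}(a))(v\ppr_!w^{\prime\ast}(b))$ to an ordinary product in $T(G/e)$.
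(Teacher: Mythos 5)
Your proposal is correct and takes essentially the same approach as the paper: both invoke Corollary \ref{CorDomain} and specialize to the diagrams through $G/e$ with $v = v\ppr = \id_{G/e}$, reducing the criterion to the nonvanishing of $\gamma_X^{\ast}(a)\cdot\gamma_Y^{\ast}(b)$ in the integral domain $T(G/e)$, where (MRC) guarantees both factors are nonzero. The paper phrases this as a contrapositive while you argue directly, and you are slightly more explicit in citing Lemma \ref{LemShriek}(2) to justify $\id_! = \id$, but the substance is identical.
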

\begin{proof}
We use the criterion of Corollary \ref{CorDomain}. Let $a\in T(X)$ and $b\in T(Y)$ be any pair of elements, with transitive $X,Y\in\Ob(\Gs)$. Suppose
\[ (v_!w^{\ast}(a))\cdot(v\ppr_!w^{\prime\ast}(b))=0 \]
is satisfied for any $C\overset{v}{\leftarrow}D\overset{w}{\rightarrow}X$, $C\overset{v\ppr}{\leftarrow}D\ppr\overset{w\ppr}{\rightarrow}Y$.
Especially for morphisms $\gamma\colon G/e\rightarrow X$ and $\gamma\ppr\colon G/e\rightarrow Y$, we have $\gamma^{\ast}(a)\gamma^{\prime\ast}(b)=0$. Since $T(G/e)$ is an integral domain, this is equivalent to
\[ \gamma^{\ast}(a)=0\quad\text{or}\quad\gamma^{\prime\ast}(b)=0, \]
which is, in turn, equivalent to
\[ a=0\quad\text{or}\quad b=0. \]
\end{proof}

\begin{thm}\label{ThmField}
For any Tambara functor $T\ne 0$, the following are equivalent.
\begin{enumerate}
\item $T$ is field-like.
\item $T$ satisfies {\rm (MRC)}, and $T(G/e)$ has no non-trivial $G$-invariant ideal.
\end{enumerate}
\end{thm}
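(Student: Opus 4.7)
The plan is to establish the two implications separately, making essential use of the correspondence $I\mapsto\I_I$ of Proposition \ref{PropIM}, which sends a $G$-invariant ideal $I\subseteq T(G/e)$ to an ideal of $T$ with $\I_I(G/e)=I$. Before starting either implication, I would record the preliminary observation that $T\ne 0$ forces $T(G/e)\ne 0$: if $T(G/e)$ were the zero ring then $1=0$ there, and for each $H\le G$ the surjective projection $p^H_e\colon G/e\to G/H$ would give $1_{T(G/H)}=(p^H_e)_\bullet(1)=(p^H_e)_\bullet(0)=0$ by Fact \ref{FactTam}(1) (since $G/H\setminus p^H_e(G/e)=\emptyset$), making every transitive slice of $T$ zero and hence $T=0$.

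For (2)$\Rightarrow$(1), I would take an arbitrary proper ideal $\I\subsetneq T$ and show $\I=(0)$. The slice $\I(G/e)$ is a $G$-invariant ideal of $T(G/e)$, so by hypothesis it is either $0$ or all of $T(G/e)$; the latter would place $1\in\I(G/e)$, and since $T(G/e)\ne 0$ by the preliminary observation, Proposition \ref{PropTrivIdeal} would then yield $\I=T$, a contradiction. Hence $\I(G/e)=0$. Here MRC is what makes the argument propagate: for each transitive $X$ and any $\gamma\in\Gs(G/e,X)$, $\gamma^\ast$ is monomorphic and satisfies $\gamma^\ast(\I(X))\subseteq\I(G/e)=0$, forcing $\I(X)=0$. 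The disjoint-union compatibility built into Definition \ref{DefIdeal} extends this conclusion from transitive $X$ to all of $\Ob(\Gs)$, so $\I=(0)$.

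For (1)$\Rightarrow$(2), I would run the correspondence of Proposition \ref{PropIM} in the opposite direction. Given any $G$-invariant ideal $I\subseteq T(G/e)$, field-likeness leaves only the options $\I_I=(0)$ or $\I_I=T$, so $I\in\{0,T(G/e)\}$; this handles the no-non-trivial-$G$-invariant-ideal half of (2). Specializing to $I=(0)$ produces the ideal $\I^T_{(0)}$ of the paragraph preceding Theorem \ref{ThmMRCUniv}, with $\I^T_{(0)}(G/e)=0$; if $\I^T_{(0)}=T$ then $T(G/e)=0$, which the preliminary observation rules out, so $\I^T_{(0)}=(0)$. Consequently $T=T/\I^T_{(0)}=\TMRC$ satisfies MRC by Theorem \ref{ThmMRCUniv}(1). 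The only step that is not pure formal manipulation is the preliminary observation ruling out $T(G/e)=0$; without it the two extreme options for $\I^T_{(0)}$ would not collapse to $\I^T_{(0)}=(0)$, and that is the main obstacle I would expect.
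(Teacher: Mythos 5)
Your proof is correct. It does take a somewhat different route from the paper's, most visibly in the implication $(1)\Rightarrow$(MRC). The paper argues contrapositively: if {\rm (MRC)} fails at some transitive $X$, it picks a nonzero $a\in T(X)$ with $\gamma^\ast(a)=0$ for all $\gamma\in\Gs(G/e,X)$, and uses the explicit description of $\langle a\rangle$ from Proposition \ref{PropPrincipal} (together with the fact that the $D$ appearing there has trivial stabilizers) to conclude $\langle a\rangle(G/e)=0$, hence $(0)\subsetneq\langle a\rangle\subsetneq T$ exhibits a proper non-zero ideal. You instead observe that field-likeness forces $\I^T_{(0)}\in\{(0),T\}$, rule out $\I^T_{(0)}=T$ via $T(G/e)\ne 0$, and then invoke Theorem \ref{ThmMRCUniv}(1) to get $T=\TMRC\in\Ob(\TamM)$. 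Your version is cleaner and more structural --- it delegates the computation to the already-proved MRC-zation theorem rather than redoing it via Proposition \ref{PropPrincipal} --- and it makes explicit the preliminary fact $T\ne 0\Rightarrow T(G/e)\ne 0$, which the paper uses silently when it asserts $\langle a\rangle\subsetneq T$. For $(2)\Rightarrow(1)$ the two arguments are essentially equivalent, both funneling through the $G/e$-slice and Proposition \ref{PropTrivIdeal}, just organized differently (you show any proper $\I$ is $(0)$; the paper shows $\langle a\rangle=T$ for any nonzero $a$). The one small point to keep in mind is that your route leans on Theorem \ref{ThmMRCUniv}(1), whose proof in the paper depends on Proposition \ref{PropIM} but not on Theorem \ref{ThmField}, so there is no circularity.
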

\begin{proof}
First we show {\rm (1)} implies {\rm (MRC)}.

Suppose $T$ does not satisfy {\rm (MRC)$^{\prime}$}.
Then there exist transitive $X\in\Ob(\Gs)$ and non-zero $a\in T(X)$ such that
\[ \gamma_0^{\ast}(a)=0 \]
holds for some $\gamma_0\in\Gs(G/e,X)$. Since any $G$-map $\gamma\in\Gs(G/e,X)$ can be written as $\gamma=\gamma_0\circ f$ for some $f\in\Gs(G/e,G/e)$, it follows that
\begin{equation}\label{Eqf}
\gamma^{\ast}(a)=f^{\ast}\gamma_0^{\ast}(a)=0\quad({}^{\forall}\gamma\in\Gs(G/e,X)).
\end{equation}
Recall that, by Proposition \ref{PropPrincipal}, we have
\[ \langle a\rangle(G/e)=\{ p_+(c\cdot(v_!w^{\ast}(a)))\mid G/e\overset{p}{\leftarrow}C\overset{v}{\leftarrow}D\overset{w}{\rightarrow}A,\ c\in T(C) \}. \]
In this notation, since any point in $D$ has stabilizer equal to $\{ e\}$, $(\ref{Eqf})$ implies $w^{\ast}(a)=0$. Consequently we have $\langle a\rangle(G/e)=0$, and thus
\[ (0)\subsetneq\langle a\rangle\subsetneq T, \]
which means $(0)\subseteq T$ is not maximal. 

Second, we show {\rm (1)} implies that $T(G/e)$ has no non-trivial ideal. Suppose $T(G/e)$ has a non-trivial ideal $0\ne I\subsetneq T(G/e)$. By Proposition \ref{PropIM}, there is an ideal $\I_I\subseteq T$ satisfying $\I_I(G/e)=I$. Thus $(0)\subseteq T$ is not a maximal ideal.

\smallskip

Conversely, we show {\rm (2)} implies {\rm (1)}. It suffices to show, for any transitive $X\in\Ob(\Gs)$, any non-zero element $a\in T(X)$ satisfies $\langle a\rangle=T$.

Take any $\gamma\in\Gs(G/e,X)$. By {\rm (MRC)} we have 
$0\ne\gamma^{\ast}(a)\in\langle a\rangle(G/e)$.
Since $\langle a\rangle(G/e)$ is a $G$-invariant ideal of $T(G/e)$, this implies $\langle a\rangle(G/e)=T(G/e)$. By Proposition \ref{PropTrivIdeal}, it follows $\langle a\rangle=T$.
\end{proof}

Theorem \ref{ThmField} also can be shown using the following.
\begin{rem}\label{RemField}
For any Tambara functor $T$, the set of maximal ideals of $T$ is bijective to the set of $G$-invariant ideals of $T(G/e)$ maximal with respect to the inclusion.
\end{rem}
\begin{proof}
This immediately follows from Proposition \ref{PropIM}.

\end{proof}


\begin{ex}\label{ExDF}
$\ \ $
\begin{enumerate}
\item If $\mathbb{F}$ is a $G$-ring where $\mathbb{F}$ is a field, then $\mathcal{P}_{\mathbb{F}}$ is field-like.
\item If $R$ is a $G$-ring where $R$ is an integral domain, then $\mathcal{P}_R$ is domain-like.
\end{enumerate}
\end{ex}

However, even if $T$ is field-like, $T(G/e)$ is not necessarily an integral domain as in the following example.
\begin{ex}\label{ExCounterDF}
Let $G=\mathfrak{S}_n$ be the $n$-th symmetric group.
Let $\mathbb{F}$ be a field, and let $R$ be the product of $n$-copies of $\mathbb{F}$, 
on which $G$ acts by
\begin{eqnarray*}
\sigma\cdot (x_1,\ldots,x_n)=(x_{\sigma(1)},\ldots,x_{\sigma(n)}),
\end{eqnarray*}
for any $\sigma\in\mathfrak{S}_n$ and $(x_1,\ldots,x_n)\in R$.
Then $\mathcal{P}_R$ 
becomes a field-like Tambara functor by Theorem \ref{ThmField}.
\end{ex}

\medskip

On the other hand, we have the following.
\begin{rem}\label{RemFixedField}
For any field-like Tambara functor $T$, the following holds.
\begin{enumerate}
\item $T(G/e)^G$ is a field.
\item $T(G/G)$ is a subfield of $T(G/e)^G$.
\end{enumerate}
\end{rem}
\begin{proof}
{\rm (1)} For any $G$-invariant $x\in T(G/e)$, we have
\[ \langle x\rangle (G/e)=\{ rx\mid r\in T(G/e) \} \]
by Proposition \ref{PropPrincipal}.
If $x\ne 0$, we have $\langle x\rangle (G/e)\ni 1$, and thus there exists some $r\in T(G/e)$ satisfying $rx=1$.

{\rm (2)} Remark that $T$ satisfies {\rm (MRC)}, and $(p^G_e)^{\ast}\colon T(G/G)\hookrightarrow T(G/e)^G$ is injective. Since $T$ is a Tambara subfunctor of $\mathcal{P}_{T(G/e)}$ by Proposition \ref{PropMRCFixed}, we have
\begin{equation}
\label{Eqpp}
(p^G_e)_{\bullet}(p^G_e)^{\ast}(x)=x^{|G|}
\end{equation}
for any $x\in T(G/e)$. Since $(p^G_e)^{\ast}(x)$ is $G$-invariant, it is invertible in $T(G/e)$ by {\rm (1)} if $x\ne 0$. By $(\ref{Eqpp})$, it follows that $x\ne 0$ is invertible in $T(G/G)$.
\end{proof}

\smallskip

In the rest, we show $\Omega\in\Ob(\TamG)$ is a domain-like Tambara functor, for any finite group $G$.

We use the notation in Remark \ref{RemOmega}.
Recall that for any transitive $X\cong G/H\in\Ob(\Gs)$, any $a\in\Omega(X)$ can be decomposed uniquely as
\begin{eqnarray*}
a&=&\sum_{K\in\mathcal{O}(H)} m_K\, G/K\qquad(m_K\in\mathbb{Z})\\
&=&mX+\sum_{K\pn H} m_K\, G/K\qquad(m_K\in\mathbb{Z},\ m=m_H)
\end{eqnarray*}
in $\Omega(X)$. We denote this integer $m$ by $\rho_X(a)$.
This defines a ring homomorphism
\[ \rho_X\colon\Omega(X)\rightarrow\mathbb{Z}. \]



\begin{prop}\label{Proprho}
Let $X\in\Ob(\Gs)$ be transitive. For any $a\in\Omega(X)$, we have
\[ \rho_{G/G}((\pt_X)_!(a))=\rho_X(a). \]
\end{prop}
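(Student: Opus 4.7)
The plan is to use that $\pt_X$ is surjective (since $X$ is transitive, hence non-empty), so by Lemma \ref{LemShriek}(2) one has $(\pt_X)_! = (\pt_X)_\bullet$. It then suffices to show $\rho_{G/G}((\pt_X)_\bullet(a)) = \rho_X(a)$. I would first handle the case where $a = [A]$ is represented by an actual $G$-set over $X$, and then extend to virtual elements by means of the addition formula.

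For the positive case $a = [A]$, the Burnside Tambara structure gives $(\pt_X)_\bullet([A]) = [\Pi_{\pt_X}(A)]$, where $\Pi_{\pt_X}(A)$ is the $G$-set of set-theoretic sections $\sigma \colon X \to A$ of the structure map, with the conjugation action of Definition \ref{DefTamFtr}. The ring homomorphism $\rho_{G/G}$ sends a $G$-set to the number of its $G$-fixed points (since on basis elements $G/K$ it takes the value $1$ if $K = G$ and $0$ otherwise), so $\rho_{G/G}((\pt_X)_\bullet([A]))$ equals the number of $G$-equivariant sections $X \to A$. Writing $X \cong G/H$, the image of any $G$-equivariant section is a single $G$-orbit of $A$ isomorphic to $X$ over $X$, and conversely each such orbit determines a unique equivariant section. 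Hence the count equals the coefficient of $[X]$ in the basis decomposition of $[A]$, which is $\rho_X([A])$ by definition.

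For general $a \in \Omega(X)$, write $a = [A] - [B]$ with $A, B$ finite $G$-sets over $X$ and apply the addition formula of Fact \ref{FactTam}(3) to $[A] = a + [B]$, using the exponential diagram of Remark \ref{RemExp} for $f = \pt_X$. This yields
\[ (\pt_X)_\bullet([A]) = s_+(t_\bullet r^*(a) \cdot t'_\bullet r'^*([B])). \]
Applying $\rho_{G/G}$ and invoking the positive case for $[A]$ and $[B]$ together with the additivity of $\rho_X$ reduces the statement to the additivity of $\rho_{G/G} \circ (\pt_X)_\bullet$ on $\Omega(X)$. This is the main obstacle, since $(\pt_X)_\bullet$ itself is only multiplicative. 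The key observation for overcoming it is that additivity does hold on the positive cone---a $G$-equivariant section into a disjoint union $A_1 \sqcup A_2 \to X$ lies entirely in one summand, because its image is a single $G$-orbit---and this can be propagated to virtual elements by unpacking $\rho_{G/G}$ on the right-hand side of the addition formula, tracking $G$-fixed points through the auxiliary $G$-sets $U$, $U'$, $V$ of the exponential diagram and using that $\rho_{G/G} \circ s_+$ is additive.
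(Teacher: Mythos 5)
Your overall plan is sound and runs parallel to the paper's: reduce to $(\pt_X)_\bullet$ using surjectivity, settle the positive cone, and then prove additivity of $\rho_{G/G}\circ(\pt_X)_\bullet$ via the exponential diagram. Your treatment of the positive case is a nice simplification: identifying $\rho_{G/G}((\pt_X)_\bullet([A]))$ with the count of $G$-equivariant sections, and that count with the number of orbits of $A$ mapping isomorphically to $X$, directly gives the coefficient of $[X]$. The paper instead goes the long way round, proving separately that transitive $A$ with $\Gs(X,A)=\emptyset$ contribute $0$ (its Lemma \ref{LemForProprho1}) and then invoking additivity; your version avoids needing \ref{LemForProprho1} as a standalone statement. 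So far so good.

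The genuine gap is in the last step. Additivity on the positive cone does not propagate to $\Omega(X)$ on its own --- the composite $\rho_{G/G}\circ(\pt_X)_\bullet$ is not a monoid map being extended to a Grothendieck group, since $(\pt_X)_\bullet$ is only multiplicative; so the line about ``propagating to virtual elements'' is precisely what Lemma \ref{LemForProprho2} has to prove, and your sketch does not yet contain the idea that makes it work. Saying ``$\rho_{G/G}\circ s_+$ is additive'' is true but far too weak: the input to $s_+$ is the \emph{product} $t_\bullet r^\ast(a)\cdot t'_\bullet r'^\ast(b)$, and you need to control how $\rho_{G/G}\circ s_+$ interacts with that product. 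The mechanism is the stabilizer decomposition $V=\Ve\amalg\Vo\amalg\Vt$ (with $\Ve=\{\emptyset\}$, $\Vt=\{X\}$), the identity
\[
\rho_{G/G}(s_+(v))=\rho_{\Ve}(v_{\mathfrak e})+\rho_{\Vt}(v_{\mathfrak t}),
\]
and the observation that $t$ misses $\Ve$ while $t'$ misses $\Vt$, forcing $(t_\bullet r^\ast(a))_{\mathfrak e}=1$ and $(t'_\bullet r'^\ast(b))_{\mathfrak t}=1$ by the empty-product behaviour of the norm. Only with those three ingredients does $\rho_{G/G}((\pt_X)_!(a+b))$ split into the $\Ve$-term depending on $b$ alone plus the $\Vt$-term depending on $a$ alone, which, after specializing $a=0$ and $b=0$, gives the additivity you want. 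Without identifying this decomposition and the fact that the norm along a map with empty fibers is $1$, the argument does not close.
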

\begin{proof}
We use the following lemmas.

\begin{lem}\label{LemForProprho1}
Suppose $a=(A\overset{p}{\rightarrow}X)$ where $A$ is a transitive $G$-set over $X$ satisfying $\Gs(X,A)=\emptyset$. Then we have
\[ \rho_{G/G}((\pt_X)_!(a))=0. \]
\end{lem}

\begin{lem}\label{LemForProprho2}
For any $a,b\in\Omega(X)$, we have
\[ \rho_{G/G}((\pt_X)_!(a+b))=\rho_{G/G}((\pt_X)_!(a))+\rho_{G/G}((\pt_X)_!(b)). \]
\end{lem}

\bigskip

Assume Lemma \ref{LemForProprho1} and Lemma \ref{LemForProprho2} are shown. Since
\[ \rho_{G/G}((\pt_X)_!(X))=\rho_{G/G}((\pt_X)_!(1))=1, \]
it follows
\[ \rho_{G/G}((\pt_X)_!(mX))
=m\ \ \quad({}^{\forall}m\in\mathbb{Z}) \]
by Lemma \ref{LemForProprho2}. Thus Proposition \ref{Proprho} hold for $a=mX$\ \ ($m\in\mathbb{Z}$).

By Remark \ref{RemOmega}, for any $X\cong G/H$, any $a\in\Omega(X)$ is decomposed as
\[ a=mX+\sum_{K\pn H} m_K\, G/K\qquad(m,m_K\in\mathbb{Z}). \]
Since $\Gs(X,G/K)=\emptyset$ for any $K\pn H$, we have
\[ \rho_{G/G}((\pt_X)_!(G/K))=0 \]
by Lemma \ref{LemForProprho1}.

Thus by Lemma \ref{LemForProprho2}, we obtain
\begin{eqnarray*}
\rho_{G/G}((\pt_X)_!(a))&=&\rho_{G/G}((\pt_X)_!(mX))+\sum_{K\pn H}m_K\, \rho_{G/G}((\pt_X)_!(G/K))\\
&=&m,
\end{eqnarray*}
and thus Proposition \ref{Proprho} follows.

\smallskip

Thus it remains to show Lemma \ref{LemForProprho1} and Lemma \ref{LemForProprho2}.

\begin{proof}[Proof of Lemma \ref{LemForProprho1}]
By the definition of $(\pt_X)_!$, we have
\begin{eqnarray*}
(\pt_X)_!(a)&=&\Pi_{\pt_X}(A)\\
&=&\{ \sigma\mid \sigma\colon X\rightarrow A\ \text{is a map of sets},\ p\circ\sigma=\id_X \}.
\end{eqnarray*}

If there exists $\sigma\in\Pi_{\pt_X}(A)$ such that $G_{\sigma}=G$, then it means $\sigma$ is a $G$-map from $X$ to $A$, which contradicts to $\Gs(X,A)=\emptyset$. Thus any $\sigma\in\Pi_{\pt_X}(A)$ does not have stabilizer equal to $G$, and thus $\rho_{G/G}((\pt_X)_!(a))=0$.
\end{proof}

\begin{proof}[Proof of Lemma \ref{LemForProprho2}]
In the notation of Remark \ref{RemExp} (by letting $f=\pt_X$), we have
\[ (\pt_X)_!(a+b)=(\pt_X)_{\bullet}(a+b)=s_+(t_{\bullet}r^{\ast}(a)\cdot t\ppr_{\bullet}r^{\prime\ast}(b)). \]

Remark that $V=G/G\times\{ C\mid C\subseteq X\}=\{ C\mid C\subseteq X\}$. Since $X$ is transitive, $C\in V$ has stabilizer equal to $G$ if and only if $C=\emptyset$ or $C=X$. Thus $V$ decomposes as
\[ V=\Ve\amalg\Vo\amalg\Vt, \]
where $\Ve=\{\emptyset\}$, $\Vt=\{ X\}$, and $\Vo$ is their complement $\Vo=V\setminus (\Ve\cup\Vt)$, which does not contain any point with stabilizer $G$. 

Under the identification $\Omega(V)\cong\Omega(\Ve)\times\Omega(\Vo)\times\Omega(\Vt)$,
we write any $v\in\Omega(V)$ as
\[ v=(v_{\mathfrak{e}},v_{\mathfrak{o}},v_{\mathfrak{t}}), \]
where $v_{\mathfrak{e}}\in\Omega(\Ve)$, $v_{\mathfrak{o}}\in\Omega(\Vo)$, $v_{\mathfrak{t}}\in\Omega(\Vt)$.
Looking at stabilizers, we can show
\begin{equation}
\label{Eqrho}
\rho_{G/G}(s_+(v))=\rho_{\Ve}(v_{\mathfrak{e}})+\rho_{\Vt}(v_{\mathfrak{t}}).
\end{equation}

Since $t(U)=\Vo\amalg\Vt$ and $t\ppr(U\ppr)=\Ve\amalg\Vo$, we have
\begin{eqnarray*}
t_{\bullet}r^{\ast}(a)&=&(1,(t_{\bullet}r^{\ast}(a))_{\mathfrak{o}},(t_{\bullet}r^{\ast}(a))_{\mathfrak{t}}),\\
t\ppr_{\bullet}t^{\prime\ast}(b)&=&((t\ppr_{\bullet}r^{\prime\ast}(b))_{\mathfrak{e}},(t\ppr_{\bullet}r^{\prime\ast}(b))_{\mathfrak{o}},1),
\end{eqnarray*}
and,
\begin{eqnarray*}
t_{\bullet}r^{\ast}(0)&=&(1,0,0),\\
t\ppr_{\bullet}r^{\prime\ast}(0)&=&(0,0,1).
\end{eqnarray*}
By $(\ref{Eqrho})$, we have
\begin{eqnarray}\label{*1}
\rho_{G/G}((\pt_X)_!(a+b))&=&\rho_{G/G}(s_+(t_{\bullet}r^{\ast}(a)\cdot t\ppr_{\bullet}r^{\prime\ast}(b)))\\
&=&\rho_{\Ve}((t\ppr_{\bullet}r^{\prime\ast}(b))_{\mathfrak{e}})+\rho_{\Vt}((t_{\bullet}r^{\ast}(a))_{\mathfrak{t}}).\nonumber
\end{eqnarray}
Especially, if we let $a=0$ or $b=0$, then we obtain respectively
\begin{eqnarray}
\rho_{G/G}((\pt_X)_!(b))&=&\rho_{\Ve}(t\ppr_{\bullet}r^{\prime\ast}(b)_{\mathfrak{e}}),\label{*2}\\
\rho_{G/G}((\pt_X)_!(a))&=&\rho_{\Vt}(t_{\bullet}r^{\ast}(a)_{\mathfrak{t}}).\label{*3}
\end{eqnarray}
By $(\ref{*1})$, $(\ref{*2})$ and $(\ref{*3})$, we obtain
\[ \rho_{G/G}((\pt_X)_!(a+b))=\rho_{G/G}((\pt_X)_!(a))+\rho_{G/G}((\pt_X)_!(b)). \]
\end{proof}
\end{proof}

\begin{thm}\label{ThmOmegaDom}
For any finite group $G$, the Burnside Tambara functor $\Omega$ on $G$ is domain-like.
\end{thm}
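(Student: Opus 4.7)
The plan is to verify the criterion of Corollary \ref{CorDomain}: for any transitive $X, Y \in \Ob(\Gs)$ and nonzero $a \in \Omega(X)$, $b \in \Omega(Y)$, I must exhibit diagrams $C \xleftarrow{v} D \xrightarrow{w} X$ and $C \xleftarrow{v'} D' \xrightarrow{w'} Y$ with $(v_!w^{\ast}(a))(v'_!w'^{\ast}(b)) \ne 0$. The idea is to take $C = G/G$ and exploit the ring homomorphism $\rho_{G/G} \colon \Omega(G/G) \to \mathbb{Z}$: since $\mathbb{Z}$ is an integral domain, the product will be nonzero provided each factor has nonzero image under $\rho_{G/G}$, and by Proposition \ref{Proprho} (taking $v = \pt_D$ and $v' = \pt_{D'}$) this translates into a statement about $\rho_D$ of a suitable pullback of $a$, and similarly for $b$.

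Accordingly, the key auxiliary lemma I would establish is: for every transitive $X \cong G/H$ and every nonzero $a \in \Omega(X)$, there exists a subgroup $K \le H$ and $\gamma \in \Gs(G/K, X)$ with $\rho_{G/K}(\gamma^{\ast}(a)) \ne 0$. To prove it, decompose $a = \sum_{L \in \mathcal{O}(H)} m_L\,(G/L \xrightarrow{p^H_L} X)$ as in Remark \ref{RemOmega} and pick $L_0 \in \mathcal{O}(H)$ maximal with respect to $\preceq$ among those with $m_{L_0} \ne 0$; take $\gamma = p^H_{L_0} \colon G/L_0 \to X$. Computing each pullback $\gamma^{\ast}(G/L \xrightarrow{p^H_L} X)$ by the Mackey formula $G/L \times_{G/H} G/L_0 \cong \coprod_{g \in L \backslash H / L_0} G/(L \cap gL_0 g^{-1})$ and reading off the coefficient of the top piece $G/L_0$, I find that this coefficient vanishes unless $L_0 \preceq L$ in $\mathcal{O}(H)$. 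By maximality of $L_0$ in the support, the only surviving term is $L = L_0$ itself, and enumerating the double cosets $L_0 \backslash N_H(L_0) / L_0$ gives $\rho_{G/L_0}(\gamma^{\ast}(a)) = m_{L_0}\cdot [N_H(L_0):L_0] \ne 0$.

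Given the lemma, the theorem is immediate: apply it to both $a$ and $b$ to produce $\gamma_X \colon G/K \to X$ and $\gamma_Y \colon G/K' \to Y$ with $\rho_{G/K}(\gamma_X^{\ast}(a)) \ne 0$ and $\rho_{G/K'}(\gamma_Y^{\ast}(b)) \ne 0$. Setting $C = G/G$, $D = G/K$, $w = \gamma_X$, $v = \pt_{G/K}$, and analogously $D' = G/K'$, $w' = \gamma_Y$, $v' = \pt_{G/K'}$, Proposition \ref{Proprho} yields $\rho_{G/G}(v_!w^{\ast}(a)) = \rho_D(\gamma_X^{\ast}(a)) \ne 0$ and likewise for the $Y$-side. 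Since $\rho_{G/G}$ is a ring homomorphism into the integral domain $\mathbb{Z}$, the product $(v_!w^{\ast}(a))(v'_!w'^{\ast}(b))$ has nonzero image under $\rho_{G/G}$, hence is itself nonzero, and Corollary \ref{CorDomain} allows us to conclude that $\Omega$ is domain-like.

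The main obstacle I anticipate is the Mackey pullback computation inside the lemma: carefully identifying which $G$-orbits in $G/L \times_{G/H} G/L_0$ contribute a top piece $G/L_0$ and confirming that, in $\gamma^{\ast}(G/L_0)$, the contributing orbits are indexed precisely by the double cosets $L_0 \backslash N_H(L_0) / L_0$, so that the multiplicity $[N_H(L_0):L_0]$ emerges correctly. Once this combinatorial identification is in place, the rest is a formal consequence of Proposition \ref{Proprho} and the ring-homomorphism property of $\rho_{G/G}$.
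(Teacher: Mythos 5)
Your proposal is correct and follows essentially the same route as the paper: both reduce to the criterion of Corollary \ref{CorDomain} via $\rho_{G/G}$, both establish (your auxiliary lemma, the paper's Lemma \ref{LemForThm}) that pulling $a$ back along $p^H_{L_0}$ for $L_0$ maximal in the support gives nonzero $\rho$, and both invoke Proposition \ref{Proprho} plus the fact that $\rho_{G/G}$ is a ring homomorphism into the domain $\mathbb{Z}$. Your treatment of the lemma is slightly more explicit — you carry out the Mackey-formula count and obtain the precise coefficient $m_{L_0}\cdot[N_H(L_0):L_0]$, whereas the paper contents itself with the qualitative observation that $\rho_{G/K}((p^H_K)^{\ast}(G/L))\ne 0$ iff $K\preceq L$ together with additivity; but the underlying idea is identical.
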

\begin{proof}
We use the following lemma.
\begin{lem}\label{LemForThm}
For any transitive $X\in\Ob(\Gs)$ and any non-zero $a\in T(X)$, there exist some transitive $\widetilde{X}_a$ and a $G$-map $\nu_a\in\Gs(\widetilde{X}_a,X)$ such that
\[ \rho_{\widetilde{X}_a}(\nu_a^{\ast}(a))\ne 0. \]
\end{lem}

\smallskip

Suppose Lemma \ref{LemForThm} is proven. Let $a\in T(X)$, $b\in T(Y)$ be arbitrary non-zero elements, with $X,Y\in\Ob(\Gs)$ transitive. By Lemma \ref{LemForThm}, there exist transitive $\widetilde{X}_a$, $\widetilde{Y}_b$ and $\nu_a\in\Gs(\widetilde{X}_a,X)$, $\nu_b\in\Gs(\widetilde{Y}_b,Y)$ such that
\[ \rho_{\widetilde{X}_a}(\nu_a^{\ast}(a))\ne 0,\quad \rho_{\widetilde{Y}_b}(\nu_b^{\ast}(b))\ne 0. \]

By Proposition \ref{Proprho}, we have
\[ \rho_{G/G}((\pt_{\widetilde{X}_a})_!\nu_a^{\ast}(a))\ne 0,\quad \rho_{G/G}((\pt_{\widetilde{Y}_b})_!\nu_b^{\ast}(b))\ne 0. \]
Since $\rho_{G/G}$ is a ring homomorphism, it follows
\[ \rho_{G/G}(\, ((\pt_{\widetilde{X}_a})_!\nu_a^{\ast}(a))\cdot((\pt_{\widetilde{Y}_b})_!\nu_b^{\ast}(b))\, )\ne 0. \]
In particular $((\pt_{\widetilde{X}_a})_!\nu_a^{\ast}(a))\cdot((\pt_{\widetilde{Y}_b})_!\nu_b^{\ast}(b))\ne 0$. By Corollary \ref{CorDomain}, this means $\Omega$ is domain-like.

\smallskip

Thus it remains to show Lemma \ref{LemForThm}.
\begin{proof}[Proof of Lemma \ref{LemForThm}]
We may assume $X=G/H$ for some $H\le G$. As in Remark \ref{RemOmega}, decompose $a$ as
\[ a=\sum_{K\in\mathcal{O}(H)}m_K\, G/K. \]
Remark that, for each $K,L\in\mathcal{O}(H)$, we have
\[ \rho_{G/K}((p^H_K)^{\ast}(G/L))\ne0\ \ \Leftrightarrow\ \ K\po L. \]

Thus, if we take a maximal element $K$ in the set associated to $a$
\[ S_a=\{ L\in\mathcal{O}(H)\mid m_L\ne 0 \}, \]
then this $K$ satisfies 
\[  m_K\ne 0\quad \text{and}\quad m_L=0\ \ (K\pn {}^{\forall}L\po H), \]
and we have
\[ \rho_{G/K}((p^H_K)^{\ast}(a))\ne0 \]
by the additivity of $(p^H_K)^{\ast}$ and $\rho_{G/K}$.
\end{proof}
\end{proof}

As a corollary of arguments so far, we obtain a strict inclusion $\Spec (\Omega(G/e))\subsetneq\Spec (\Omega)$ as follows. Combined with Theorem \ref{ThmOmegaDom}, we may conclude that the (prime) ideal theory discussed in this article is a more precise tool for a Tambara functor $T$, than the ordinary ideal theory for $T(G/e)$.

\begin{cor}
Let $G$ be a finite group. The correspondence
\[ \I_{(-)}\colon\Spec (\Omega(G/e))=\Spec (\mathbb{Z}) \rightarrow\Spec (\Omega)\ ;\ \p\mapsto\I_{\p} \]
gives a strict inclusion $\Spec (\Omega(G/e))\subsetneq\Spec (\Omega)$.
\end{cor}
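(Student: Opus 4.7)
The plan is to separate the statement into three claims: (a) for each $\p\in\Spec(\mathbb{Z})$, the ideal $\I_\p\subseteq\Omega$ defined by Proposition \ref{PropIM} is actually prime; (b) the correspondence $\p\mapsto\I_\p$ is injective; and (c) there exists a prime ideal of $\Omega$ which is not of the form $\I_\p$ for any $\p$. Throughout I will use the identification $\Omega(G/e)=\mathbb{Z}$, and note that since $G$ acts trivially on this ring, every ideal of $\Omega(G/e)$ is automatically $G$-invariant, so $\I_\p$ is defined for every $\p\in\Spec(\mathbb{Z})$.

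For (a): Fix $\p\in\Spec(\mathbb{Z})$ and consider $\Omega/\I_\p$. By the argument in the proof of Theorem \ref{ThmMRCUniv}(1), any quotient of the form $T/\I_I$ (for $I\subseteq T(G/e)$ a $G$-invariant ideal) automatically satisfies {\rm (MRC)}, so $\Omega/\I_\p$ satisfies {\rm (MRC)}. Moreover $(\Omega/\I_\p)(G/e)=\mathbb{Z}/\p$ is an integral domain. By Proposition \ref{PropDF2}, $\Omega/\I_\p$ is domain-like, and by Corollary \ref{CorDF}(2), $\I_\p$ is prime. Finally $\I_\p\ne\Omega$ since $\I_\p(G/e)=\p\subsetneq\mathbb{Z}$. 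Claim (b) is trivial: $\I_\p(G/e)=\p$ recovers $\p$.

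For (c): By Theorem \ref{ThmOmegaDom}, $\Omega$ is domain-like, so $(0)\subseteq\Omega$ is a prime ideal. Suppose for contradiction that $(0)=\I_\q$ for some $\q\in\Spec(\mathbb{Z})$. Restricting to $G/e$ forces $\q=(0)$, and therefore $\I_{(0)}=(0)$ as ideals of $\Omega$. This would mean $\Omega=\Omega/\I_{(0)}=\Omega_{\mathrm{MRC}}$, which is isomorphic to $\mathcal{P}_{\mathbb{Z}}$ by Example \ref{ExMRCZ}. However (assuming $G\ne e$, which is the only case where the assertion is substantive), $\Omega(G/G)$ has $\mathbb{Z}$-rank equal to the number of conjugacy classes of subgroups of $G$, which is at least $2$, while $\mathcal{P}_{\mathbb{Z}}(G/G)=\mathbb{Z}^G=\mathbb{Z}$. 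Hence $\Omega\not\cong\mathcal{P}_{\mathbb{Z}}$, a contradiction. Thus the prime ideal $(0)\subseteq\Omega$ is not in the image of $\I_{(-)}$.

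There is no real obstacle: everything is a direct assembly of previously established results (Proposition \ref{PropIM}, Proposition \ref{PropDF2}, Corollary \ref{CorDF}, Theorem \ref{ThmOmegaDom}, Example \ref{ExMRCZ}). The only delicate point is verifying that $\Omega$ itself does not satisfy {\rm (MRC)}, which we handle by comparing $\Omega$ with its {\rm (MRC)}-zation $\Omega_{\mathrm{MRC}}\cong\mathcal{P}_{\mathbb{Z}}$ via the known explicit description; this is what ultimately forces $\I_{(0)}\ne(0)$ and exhibits the zero ideal as the desired extra prime.
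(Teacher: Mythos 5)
Your proof is correct and follows essentially the same strategy as the paper: establish that each $\I_\p$ is a proper prime ideal, observe injectivity via restriction to $G/e$, and show $(0)\subseteq\Omega$ (prime by Theorem \ref{ThmOmegaDom}) is not of the form $\I_I$ because $\Omega$ fails {\rm (MRC)}. One small refinement on your part: you handle all $\p\in\Spec(\mathbb{Z})$ uniformly by noting $\Omega/\I_\p$ satisfies {\rm (MRC)} (Theorem \ref{ThmMRCUniv}(1)) with $(\Omega/\I_\p)(G/e)=\mathbb{Z}/\p$ an integral domain and invoking Proposition \ref{PropDF2}, whereas the paper treats maximal $\p$ via Remark \ref{RemField} and $\p=(0)$ separately via Example \ref{ExMRCZ}; you also explicitly verify the failure of {\rm (MRC)} for $\Omega$ by a rank count on $G/G$ and flag the implicit hypothesis $G\ne e$, both of which the paper leaves tacit.
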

\begin{proof}
As in Remark \ref{RemField}, $\I_{(-)}$ sends maximal ideals of $\Omega(G/e)$ to maximal ideals of $\Omega$ bijectively.
Besides, by Example \ref{ExMRCZ} and Proposition \ref{PropDF2}, we see that $\I_{(0)}\subseteq \Omega$ is also prime.

Thus we obtain a map
\[ \I_{(-)}\colon\Spec (\Omega(G/e)) \rightarrow\Spec (\Omega)\ ;\ \p\mapsto\I_{\p}, \]
which is obviously injective. 

Moreover, Theorem \ref{ThmOmegaDom} means $(0)\subseteq\Omega$ is a prime ideal. Since $\Omega$ does not satisfy {\rm (MRC)}, this ideal is not of the form $\I_I$. 
\end{proof}

\bigskip


\end{document}